\documentclass[12pt]{amsart}
\usepackage{amssymb}
\usepackage{mathtools}
\usepackage{txfonts}
\usepackage[thicklines]{cancel}
\usepackage{eucal}
\usepackage{extarrows}
\usepackage{capt-of}
\usepackage{shuffle}
\usepackage[all]{xy}
\usepackage{tikz-cd}
\usepackage{tikz}
\usepackage{graphicx}
\usepackage{float}
\usepackage{xspace}
\usepackage{caption}
\usepackage[a4paper,body={16.3cm,22.8cm},centering]{geometry}
\usepackage[colorlinks,final,backref=page,hyperindex,pdftex]{hyperref}
\newcommand{\nc}{\newcommand}
\newcommand{\delete}[1]{}

\nc{\mlabel}[1]{\label{#1}}  
\nc{\mcite}[1]{\cite{#1}}  
\nc{\mref}[1]{\ref{#1}}  
\nc{\mbibitem}[1]{\bibitem{#1}} 

\delete{
\nc{\mlabel}[1]{\label{#1}  
{\hfill \hspace{1cm}{\small\tt{{\ }\hfill(#1)}}}}
\nc{\mcite}[1]{\cite{#1}{\small{\tt{{\ }(#1)}}}}  
\nc{\mref}[1]{\ref{#1}{{\tt{{\ }(#1)}}}}  
\nc{\mbibitem}[1]{\bibitem[\bf #1]{#1}} 
}

\newtheorem{theorem}{Theorem}[section]
\newtheorem{prop}[theorem]{Proposition}
\newtheorem{lemma}[theorem]{Lemma}
\newtheorem{coro}[theorem]{Corollary}

\theoremstyle{definition}
\newtheorem{defn}[theorem]{Definition}
\newtheorem{remark}[theorem]{Remark}
\newtheorem{exam}[theorem]{Example}
\newtheorem{prop-def}{Proposition-Definition}[section]

\newcommand\cal[1]{\mathcal{#1}}

\newcommand\alphlist{a,b,c,d,e,f,g,h,i,j,k,l,m,n,o,p,q,r,s,t,u,v,w,x,y,z}
\newcommand\Alphlist{A,B,C,D,E,F,G,H,I,J,K,L,M,N,O,P,Q,R,S,T,U,V,W,X,Y,Z}
\newcommand\getcmds[3]{\expandafter\newcommand\csname #2#1\endcsname{#3{#1}}}
\makeatletter
\@for\x:=\alphlist\do{\expandafter\getcmds\expandafter{\x}{frak}{\mathfrak}}
\@for\x:=\Alphlist\do{\expandafter\getcmds\expandafter{\x}{frak}{\mathfrak}}
\makeatother

\nc{\bfk}{{\bf k}}
\font\cyr=wncyr10

\newfont{\scyr}{wncyr10 scaled 550}
\nc{\sha}{\mbox{\cyr X}}
\nc{\ssha}{\mbox{\bf \scyr X}}

\nc{\id}{\mathrm{id}}
\nc{\Id}{\mathrm{Id}}
\nc{\lbar}[1]{\overline{#1}}
\nc{\llbar}[1]{#1}
\nc{\ot}{\otimes}
\nc{\dep}{\mathrm{dep}}
\nc{\lt}[1]{\lbar{#1}}

\nc{\tred}[1]{\textcolor{red}{#1}} \nc{\tgreen}[1]{\textcolor{green}{#1}}
\nc{\tblue}[1]{\textcolor{blue}{#1}} \nc{\tpurple}[1]{\textcolor{purple}{#1}}

\nc{\li}[1]{\tpurple{\underline{Li:}#1 }}
\nc{\liadd}[1]{\tpurple{#1}}
\nc{\xing}[1]{\tblue{\underline{Xing:}#1 }}
\nc{\dominique}[1]{\tblue{\underline{Dominique: }#1 }}
\nc{\yuan}[1]{\tred{\underline{Yuan:}#1 }}
\nc{\markus}[1]{\tred{\underline{Markus:} #1}}
\nc{\hu}[1]{\tpurple{\underline{Huhu:}#1 }}
\nc{\Hu}[1]{\tpurple{#1 }}

\usetikzlibrary{calc,shapes.geometric}
\tikzset{
baseon/.style={baseline={($(#1)+(0,-0.58ex)$)}},
baseon/.default=current bounding box.center,
every picture/.style=baseon,
lst/.style={},
dst/.style={circle,inner sep=0.5pt,outer sep=0pt,fill,draw,dst2},
dst2/.style={fill=white},
ddst/.style={diamond,draw,inner sep=0.5pt},
eest/.style={ellipse,draw,inner sep=0.5pt,minimum size=0.5ex},
}

\newcommand\treeo[2][]{\tikz[x=0.4cm,y=0.4cm,line width=0.15ex,
every node/.style={font=\scriptsize,inner sep=0.5pt,label distance=0.5pt},#1]{%
\coordinate (o) at (0,0);#2}}%
\def\zzz#1`#2...#3`#4...#5`#6@{%
--++(#1)
node[dst,label={#5:$#6$},name=#2]{}
node[midway,auto,#3]{$#4$}
}
\def\ddd#1`#2`#3@{+(#1)node[ddst,name=#2]{$#3$}}
\def\eee#1`#2`#3@{+(#1)node[eest,name=#2]{$#3$}}
\def\xxx#1`#2@{node[midway,auto,inner sep=1pt,#1]{$#2$}}
\def\pp#1`#2`#3@{node[dst,label={#2:$#3$},pos=#1]{}}
\def\oo#1`#2`#3@{\path (o) node[dst,label={#2:$#3$},name=o,#1]{};}
\def\eoo#1`#2@{\node[eest,name=o,#1] at (o) {$#2$};}

\newif\ifshowjdq
\showjdqtrue
\newcommand\setXXclip[3]{%
\def\XXheight{#1}\def\XXdepth{#2}\def\XXwidth{#3}}
\setXXclip{1}{-0.5}{1.1}

\def\nnn#1`#2`#3@{+(#1)node[name=#2]{$#3$}}
\newcommand\oneo[3]{%
\treeo[x=0.5cm,y=0.5cm]{
\eoo`#1@
\draw (o) -- +(0,-1);
\draw (o) \nnn-1,1`l`#2@ \nnn1,1`r`#3@ (o)--(l) (o)--(r);
}}
\newcommand\twool[5]{%
\treeo[x=0.5cm,y=0.5cm]{
\eoo`#1@
\draw (o) -- +(0,-1);
\draw (o) \eee-1,1`l`#2@ \nnn1,1`r`#3@ (o)--(l) (o)--(r);
\draw (l) \nnn-1,1`ll`#4@ \nnn1,1`lr`#5@ (l)--(ll) (l)--(lr);
}}
\newcommand\twoor[5]{%
\treeo[x=0.5cm,y=0.5cm]{
\eoo`#1@
\draw (o) -- +(0,-1);
\draw (o) \nnn-1,1`l`#2@ \eee1,1`r`#3@ (o)--(l) (o)--(r);
\draw (r) \nnn-1,1`rl`#4@ \nnn1,1`rr`#5@ (r)--(rl) (r)--(rr);
}}
\newcommand\threeoll[7]{%
\treeo[x=0.4cm,y=0.4cm]{
\eoo`#1@
\draw (o) -- +(0,-1);
\draw (o) \eee-1,1`l`#2@ \nnn1,1`r`#3@ (o)--(l) (o)--(r);
\draw (l) \eee-1,1`ll`#4@ \nnn1,1`lr`#5@ (l)--(ll) (l)--(lr);
\draw (ll) \nnn-1,1`lll`#6@ \nnn1,1`llr`#7@ (ll)--(lll) (ll)--(llr);
}}
\newcommand\threeolr[7]{%
\treeo[x=0.4cm,y=0.4cm]{
\eoo`#1@
\draw (o) -- +(0,-1);
\draw (o) \eee-1,1`l`#2@ \nnn1,1`r`#3@ (o)--(l) (o)--(r);
\draw (l) \nnn-1,1`ll`#4@ \eee1,1`lr`#5@ (l)--(ll) (l)--(lr);
\draw (lr) \nnn-1,1`lrl`#6@ \nnn1,1`lrr`#7@ (lr)--(lrl) (lr)--(lrr);
}}
\newcommand\threeorr[7]{%
\treeo[x=0.4cm,y=0.4cm]{
\eoo`#1@
\draw (o) -- +(0,-1);
\draw (o) \nnn-1,1`l`#2@ \eee1,1`r`#3@ (o)--(l) (o)--(r);
\draw (r) \nnn-1,1`rl`#4@ \eee1,1`rr`#5@ (r)--(rl) (r)--(rr);
\draw (rr) \nnn-1,1`rrl`#6@ \nnn1,1`rrr`#7@ (rr)--(rrl) (rr)--(rrr);
}}
\newcommand\threeorl[7]{%
\treeo[x=0.4cm,y=0.4cm]{
\eoo`#1@
\draw (o) -- +(0,-1);
\draw (o) \nnn-1,1`l`#2@ \eee1,1`r`#3@ (o)--(l) (o)--(r);
\draw (r) \eee-1,1`rl`#4@ \nnn1,1`rr`#5@ (r)--(rl) (r)--(rr);
\draw (rl) \nnn-1,1`rll`#6@ \nnn1,1`rlr`#7@ (rl)--(rll) (rl)--(rlr);
}}
\newcommand\threeoY[7]{%
\treeo[x=0.4cm,y=0.4cm]{
\eoo`#1@
\draw (o) -- +(0,-1);
\draw (o) \eee-1,1`l`#2@ \eee1,1`r`#3@ (o)--(l) (o)--(r);
\draw (l) \nnn-1,1`ll`#4@ \nnn0.5,1`lr`#5@ (l)--(ll) (l)--(lr);
\draw (r) \nnn-0.5,1`rl`#6@ \nnn1,1`rr`#7@ (r)--(rl) (r)--(rr);
}}

\makeatletter
\newcommand\simra{\mathrel{\mathpalette\@verra\sim}}
\def\@verra#1#2{\lower.5\p@\vbox{\lineskiplimit\maxdimen \lineskip-.5\p@
\ialign{$\m@th#1\hfil##\hfil$\crcr#2\crcr\rightarrow\crcr}}}
\makeatother

\nc{\dnx}{\Delta_n A} \nc{\dx}{\Delta A} \nc{\dgp}{{\rm deg_{P}}}
\nc{\dgt}{{\rm deg_{T}}} \nc{\dg}{{\rm deg}} \nc{\ida}{ID($A$)} \nc{\tu}{\tilde{u}} \nc{\tv}{\tilde{v}}
\nc{\nr}{\calr_n} \nc{\nz}{\calz_n} \nc{\fun}{\cala_{n,d}}
 \nc{\fbase}{\calb} \nc{\LF}{\mathrm{RF}} \nc{\FFA}{\mathrm{LF}} \nc{\irr}{\mathrm{Irr}}
 \nc{\result}{\bfk\mathrm{Irr}(S_n)}  \nc{\I}{I_{\mathrm{ID},n}^0}
 \nc{\nrs}{\calr_n^\star} \nc{\ii}{\mathrm{I}} \nc{\iii}{\mathrm{II}}
\nc{\intl}{{\rm int}}\nc{\ws}[1]{{#1}}\nc{\deleted}[1]{\delete{#1}}\nc{\plas}{placements\xspace}

\nc{\bim}[1]{#1}  \nc{\shaop}{\sha_{\Omega}^{+}}  \nc{\shao}{\sha_{\Omega}}
\nc{\bbim}[2]{#1 #2} \nc{\bbbim}[2]{#1,\, #2} \nc{\RBF}{{\rm RBF}}
\nc{\frb}{F_{\RB}} \nc{\shaf}{\ssha_{\tiny{\Omega}}} \nc{\sham}{\diamond_{\tiny{\Omega}}}
\nc{\lf}{\lfloor} \nc{\rf}{\rfloor} \nc{\shan}{\ssha_{\lambda}}
\nc{\rlex}{{\rm {lex}}} \nc{\bb}{\Box} \nc{\ra}{\rightarrow}
\nc{\e}{{\rm {e}}}
\nc{\DDF}{\mathrm{DD}(X,\,\Omega)}\nc{\DTF}{\mathrm{DT}(X,\,\Omega)} \nc{\DT}{\mathrm{DT}'(\Omega,\,V)}
\nc{\bra}{\mathrm{bra}} \nc{\bre}{\mathrm{bre}}
\nc{\dec}{\mathrm{dec}} \nc{\diamondw}{\diamond_{w}}
\nc{\type}{\mathrm{type}}

\nc\caF[1]{\cal{F}_{#1}(X,\,\Omega)}
\nc\calt{\cal{T}} \nc\caltn{\cal{T}_n(X,\,\Omega)}
\nc\caltbin{\cal{T}_b(X,\,\Omega)}
\nc\calta{\cal{T}_0(X,\,\Omega)}
\nc\caltb{\cal{T}_1(X,\,\Omega)}
\nc\caltc{\cal{T}_2(X,\,\Omega)}
\nc\caltd{\cal{T}_3(X,\,\Omega)}
\nc\caltm{\cal{T}_m(X,\,\Omega)}
\nc\calf{\cal{F}(X,\,\Omega)}
\nc\fram{\frak{M}(\Omega,\, X)}
\nc\shaw{\sha^{NC}_w(\Omega,\, X)}
\nc\dw{\diamond_w} \nc\dl{\diamond_\ell}
\nc\shal{\sha^{NC}_\ell(X,\, \Omega)} \nc\shav{\sha^{NC}_w(\Omega,\, V)} \nc\shat{\sha^{NC,1}_w(\Omega,\, T^{+}(V))}
\nc{\cfo}{\cal{F}(X,\,\Omega)}

\nc{\lar}{\varinjlim}
\nc\XO{(X,\,\Omega)}
\def\cxo#1#2;{\cal{#1}#2\XO}
\def\cxob#1#2;{\cal{#1}#2_b\XO}
\nc\lrf[2]{B_{#2}^+(#1)}
\nc{\fd}{\mathrm{\text{typed angularly decorated planar rooted trees}}}
\nc{\rb}{\mathrm{RBFWs}} \nc{\dfw}{\mathrm{DFW{(X)}}} \nc{\tfw}{\mathrm{TFW{(X)}}}
\nc{\tfv}{\mathrm{TFW{(V)}}} \nc{\rbf}{\mathrm{RBF}}

\def\Ve#1,#2,#3;{\vee_{#1,\,(#2,\,#3)}}
\def\bigv#1;#2;#3;{\bigvee\nolimits_{#1}^{#2;\,#3}}

\nc{\Irr}{\mathrm{Irr}}
\nc{\Tsucs}{{trisuccessors}\xspace}

\nc{\gensp}{\mathcal{V}} 
\nc{\relsp}{R} 
\nc{\leafsp}{\mathcal{X}}    
\nc{\treesp}{\mathbb{T}} 
\nc{\genbas}{X} 
\nc{\opd}{\mathcal{P}} 

\nc{\vin}{\mathrm{V}}    
\nc{\lin}{\mathrm{L}}    
\nc{\inv}{\mathrm{In}}

\nc{\bvp}{V_P}     

\nc{\gop}{{\,\omega\,}}     
\nc{\gopb}{{\,\nu\,}}
\nc{\svec}[2]{{\textrm{\tiny{$\left(\begin{matrix}#1\\
#2\end{matrix}\right)$}}}}  
\nc{\ssvec}[2]{{\textrm{\tiny{$\left(\begin{matrix}#1\\
#2\end{matrix}\right)$}}}} 
\nc{\treeg}[5]{\vcenter{\xymatrix@M=1.5pt@R=1.5pt@C=0pt{#1 & & #2 & & & #3 \\ & #5 \ar@{-}[lu] \ar@{-}[ru] & & & & \\ & & #4 \ar@{-}[lu] \ar@{-}[rrruu] & & \\ & & & & & \\ & & \ar@{-}[uu] & & & }}}
\nc{\treed}[5]{\vcenter{\xymatrix@M=2pt@R=4pt@C=2pt{#1 & & & #2  & & #3 \\ & & & & #5 \ar@{-}[lu] \ar@{-}[ru] & \\ & & & #4 \ar@{-}[ru] \ar@{-}[llluu] & \\ & & & & & \\ & & & \ar@{-}[uu] & & }}}

\nc{\tsvec}[3]{{\textrm{\tiny{$\left(\begin{matrix}#1\\
#2\\#3\end{matrix}\right)$}}}}  

\nc{\stsvec}[3]{{\textrm{\tiny{$\left(\begin{matrix}#1\\
#2\\#3\end{matrix}\right)$}}}} 

\nc{\su}{\mathrm{DSu}}
\nc{\tsu}{\mathrm{TSu}}
\nc{\TSu}{\mathrm{TSu}}
\nc{\eval}[1]{{#1}_{\big|D}}
\nc{\oto}{\leftrightarrow}

\nc{\oaset}{\mathbf{O}^{\rm alg}}
\nc{\omset}{\mathbf{O}^{\rm mod}}
\nc{\oamap}{\Phi^{\rm alg}}
\nc{\ommap}{\Phi^{\rm mod}}
\nc{\ioaset}{\mathbf{IO}^{\rm alg}}
\nc{\iomset}{\mathbf{IO}^{\rm mod}}
\nc{\ioamap}{\Psi^{\rm alg}}
\nc{\iommap}{\Psi^{\rm mod}}

\nc{\suc}{{disuccessor}\xspace} \nc{\Suc}{{Disuccessor}\xspace}
\nc{\sucs}{{disuccessors}\xspace} \nc{\Sucs}{{Disuccessors}\xspace}
\nc{\Tsuc}{{trisuccessor}\xspace}
\nc{\TSuc}{{Trisuccessor}\xspace}
\nc{\TSucs}{{Trisuccessors}\xspace}
\nc{\Lsuc}{{L-successor}\xspace}
\nc{\Lsucs}{{L-successors}\xspace} \nc{\Rsuc}{{R-successor}\xspace}
\nc{\Rsucs}{{R-successors}\xspace}

\nc{\bia}{{$\mathcal{P}$-bimodule ${\bf k}$-algebra}\xspace}
\nc{\bias}{{$\mathcal{P}$-bimodule ${\bf k}$-algebras}\xspace}

\nc{\rmi}{{\mathrm{I}}}
\nc{\rmii}{{\mathrm{II}}}
\nc{\rmiii}{{\mathrm{III}}}

\nc{\pll}{\beta}
\nc{\plc}{\epsilon}
\nc\np{\mathcal{P}}
\nc{\BS}{{\mathbb{S}}}
\nc{\prelie}{{\mathit{Pre\text{-}Lie}}}
\nc{\LDend}{{\mathit{LDend}}}
\nc{\perm}{{\mathit{Perm}}}
\nc{\as}{{\mathit{As}}}
\nc{\lie}{{\mathit{Lie}}}
\nc{\dend}{{\mathit{Dend}}}
\nc{\qua}{{\mathit{Quad}}}
\nc{\lquad}{{\mathit{LQuad}}}
\nc{\supp}{\rm Supp}
\nc{\dps}{\dotplus}
\nc{\tvarrow}[3]{#1 \overset{(t,v)}{\longrightarrow}_{#3} #2}\nc{\gs}{Gr\"{o}bner-Shirshov\xspace}
\nc{\astarrow}{\overset{\raisebox{-2pt}{{\scriptsize $\ast$}}}{\rightarrow}}
\nc\oplie{{\rm OLie}(X)}\nc\sopma[1]{\mathfrak{M}^\star(#1)}\nc\Pia[1]{\Pi_{#1}^{\rm ass}}\nc\Pil[1]{\Pi_{#1}^{\rm Lie}}
\nc\plie[1]{\mathfrak{S}(#1)}\nc\plien[1]{\mathcal{N}(#1)}\nc\nas[1]{{#1}^\ast}
\nc\sopm[1]{\mathfrak{S}^\star(#1)}
\nc{\suba}[1]{|_{#1}}\nc\ip{\operatorname{In}}\nc\PBTN{{\rm PBT}}\nc\PMTN{{\rm TD}}
\nc\Omegax{X}\nc\fna{L(X)}\nc\fma{{{\rm Mag}_2(X)}}\nc\lp{\succ}\nc\rp{\prec}\nc\lde{{\rm LW}(X)}
\nc\lw{{\rm LT}(X)}\nc\lrea[3]{#1 \prec\left(#2 \ast #3\right)+\left( #2 \succ  #1\right) \prec  #3- #2 \succ\left( #1 \prec  #3\right)}
\nc\R{K} \nc{\ildend}{I_{{\rm LDend}}}
\nc\circa{\vee}
\nc{\bcdot}{m}

\nc\gleft{\swarrow_\lp}
\nc\gright{\swarrow_\rp}
\nc\gd{\swarrow}
\begin{document}

\title[Free L-dendriform algebras]{Disuccessors, Gr\"obner-Shirshov bases and free L-dendriform algebras}

\author{Huhu Zhang}
\address{School of Mathematics and Statistics
	Yulin University, Yulin, 719000, China}
\email{huhuzhang@yulinu.edu.cn}

\author{Xing Gao$^*$}\thanks{*Corresponding author}
\address{School of Mathematics and Statistics, Lanzhou University,
Lanzhou, 730000, China; Gansu Provincial Research Center for Basic Disciplines of Mathematics and Statistics, Lanzhou, 730070, China}
\email{gaoxing@lzu.edu.cn}

\author{Yuanyuan Zhang}
\address{School of Mathematics and Statistics, Henan University, Kaifeng, 475004, China}
\email{zhangyy17@henu.edu.cn}

\date{\today}

\begin{abstract}
In this paper, we prove respectively that the disuccessor operations on the associative operad $\as$, the Lie operad $\lie$, and the pre-Lie operad $\prelie$ preserve Gr\"obner-Shirshov bases. This structural preservation enables the transfer of known bases to more complex operads.
As a consequence, we introduce new methods for constructing Gr\"obner-Shirshov bases for the $\dend$ and $\prelie$ operads, and explicitly construct a Gr\"obner-Shirshov basis for the free L-dendriform algebra. This provides a conceptual and computationally efficient resolution of Madariaga's problem and offers new insights into the combinatorial structure of L-dendriform algebras.
\end{abstract}

\makeatletter
\@namedef{subjclassname@2020}{\textup{2020} Mathematics Subject Classification}
\makeatother
\subjclass[2020]{
16W99, 
16S10, 
13P10, 
08B20, 
17D25      
}

\keywords{L-dendriform algebras, pre-Lie  algebras, Lie algebras, free objects, Gr\"obner-Shirshov bases}

\maketitle

\tableofcontents

\setcounter{section}{0}

\allowdisplaybreaks
\section{Introduction}
In this paper, we address a problem posed by Madariaga concerning the construction of Gr\"obner-Shirshov bases for $\su^n(\lie)$-algebras, where $\su$ denotes the disuccessor operation.
We demonstrate that the disuccessor operation $\su$ preserves Gr\"obner-Shirshov bases when applied to the associative operad $\as$, the Lie operad $\lie$, and the pre-Lie operad $\prelie$. This structural property yields new methods for constructing Gr\"obner-Shirshov bases for the $\dend$ and $\prelie$ operads, and leads to an explicit Gr\"obner-Shirshov basis for the $\LDend$ operad. Our results provide a conceptual and computationally efficient solution to Madariaga's problem in the cases $n = 1$ and $n = 2$.

\subsection{From dendriform algebras to L-dendriform algebras}
The concept of a dendriform algebra was introduced by Jean-Louis Loday in 1995~\cite{Lod01}, originally motivated by questions arising in algebraic $K$-theory. Since its inception, the theory of dendriform algebras has developed significantly and found deep connections across a broad spectrum of mathematics and physics. These include, but are not limited to, the theory of operads, homology and homotopy theory, Hopf algebras, Lie and Leibniz algebras, combinatorics, arithmetic geometry, and even aspects of quantum field theory (see~\cite{EMP} and references therein).

One of the striking features of dendriform algebras is how they refine associative structures by decomposing the associative product into two binary operations satisfying certain axioms. This decomposition allows for a richer algebraic structure that interfaces naturally with other algebraic frameworks. In particular, there exists an operadic relationship between the operads of Lie algebras, associative algebras, pre-Lie algebras, and dendriform algebras~\cite{Aguiar, Chap, Ronco}:
$$
\xymatrix{
\rm{dendriform ~~operad} & \rm{associative ~~operad}  \ar[l]_{\text{splitting}}\\
\rm{pre\text{-}Lie~ ~operad}\ar[u]^{\text{commutator}}& \rm{Lie ~~operad} \ar[l]_{~~\text{splitting}}^{\text{ or commutator}}\ar[u]^{\text{commutator}}.
}
$$

L-dendriform algebras~\cite{BLN10} act as the Lie-algebraic analogues of dendriform algebras, providing the necessary structure to ensure the commutativity of the extended diagram below:
$$
\xymatrix@C=0.4cm@R=0.4cm{
\cdots\quad\rm{Quadri\text{-}operad}&\rm{dendriform ~~operad} \ar[l]_{\text{splitting}} & \rm{associative ~~operad} \ar[l]_{\text{splitting}}\\
\cdots&&& \rm{Lie ~~operad}\ar[ld]_{~~\text{splitting}}^{\text{ ~~~~~~~or commutator}} \ar[lu]_{~~\text{commutator}}\\
&\cdots\quad\rm{\text{L-}dendriform~~operad}\ar[luu]_{~~\text{commutator}}&\rm{pre\text{-}Lie~ ~operad}\ar[l]_{\text{\quad\quad\quad\quad splitting}}  \ar[luu]_{~~\text{commutator}}&.
}
$$
L-dendriform algebras are motivated from the underlying algebraic structure associated with a pseudo-Hessian pre-Lie algebra or a pseudo-Hessian Lie algebra~\cite{BLN10}. This construction is rooted in differential geometry, particularly in the study of Hessian manifolds.
A Hessian manifold $M$ is a smooth manifold equipped with both a flat affine connection and a Riemannian metric $g$ satisfying a local potential condition~\cite{Shima}: for every point $p \in M$, there exists a smooth function $\phi \in C^\infty$ defined on a neighborhood of $p$ such that
\[
g_{ij} = \frac{\partial^2 \phi}{\partial x^i \partial x^j}.
\]
In other words, $g$ is locally given by the Hessian of a potential function. Such manifolds naturally arise in information geometry and affine differential geometry.
On the algebraic side, the structure corresponding to an affine Lie group $G$ equipped with a $G$-invariant Hessian metric is a real pre-Lie algebra endowed with a symmetric, positive definite $2$-cocycle. For an overview of pre-Lie algebras and their applications, see~\cite{Burde}.
The classical notion of Hessian geometry can be generalized to the pseudo-Hessian setting by replacing the requirement of positive definiteness with nondegeneracy of the bilinear form. This relaxation allows one to consider indefinite metrics and opens the door to extending these geometric ideas to other fields (beyond $\mathbb{R}$) at the purely algebraic level. In this broader framework, L-dendriform algebras serve as a unifying structure capturing the interaction between pre-Lie algebras and pseudo-metric data~\cite{BLN10,NB14}.

\subsection{Splitting of operads}
Since the late 1990s, a variety of algebraic structures defined by multiple binary operations have emerged, beginning with the introduction of dendriform algebras and tridendriform algebras by Loday~\cite{LoRo04,Lod01}. These foundational structures inspired the development of a broad family of related algebras, including L-dendriform algebras, ennea-algebras, NS-algebras, dendriform-Nijenhuis algebras, and octo-algebras, among others~\cite{Ler04, Leroux, Ler03}. These algebras can often be interpreted as splitting of binary operations inherent in more classical or unified algebraic structures.

At the level of operads, Vallette~\cite{Va} introduced two new binary operations---Manin black and white products, analogous to the classical Manin black and white products for algebras, and provided explicit computational techniques for evaluating them. These constructions revealed a deep relationship between operadic splitting and the Manin black product with the pre-Lie operad $\prelie$. In particular, the Manin black product $\prelie \bullet \mathcal{P}$, where $\mathcal{P}$ is another binary operad, frequently results in a refined operad whose algebras exhibit split versions of the operations in $\mathcal{P}$. Some notable examples of this phenomenon include:
\begin{align*}
\prelie \bullet \as = \dend, \quad \prelie \bullet \lie = \prelie, \quad \prelie \bullet \prelie = \LDend,
\end{align*}
where $\LDend$ is the operad governing L-dendriform algebras.

In a significant advancement, Guo et al.~\cite{BBG13} established a general operadic framework that formalizes the notion of splitting of binary operads via the concept of successors. Specifically, they introduced the disuccessor and trisuccessor of a binary quadratic operad, defined in terms of generators and relations, and showed how this construction generalizes both the operadic Manin black product and Rota-Baxter type splitting mechanisms.
These developments reveal a rich web of interconnections among classical and generalized algebraic operads. In particular, there exists a commutative diagram of operads that encodes the relationships among
\[
\as, \quad \lie, \quad \prelie, \quad \LDend, \quad \dend, \quad \qua \text{ (the operad of quadri-algebras~\cite{AL})},
\]
illustrating how successive operadic constructions give rise to increasingly structured algebraic systems.
$$
\xymatrix@C=0.5cm@R=0.4cm{
\lie \ar[d]_{\rm splitting} \ar[rr]^{[~]\mapsto \cdot-\cdot(12)}&& \as \ar[d]^{\rm splitting}\\
\su(\lie)=\prelie\ar[d]_{\rm splitting}\ar[rr]^{\bullet \mapsto \succ-\prec}&& \dend=\su(\as)\ar[d]^{\rm splitting}\\
\su^2(\lie)=\su(\prelie)=\LDend \ar[d]_{\rm splitting} \ar[rr]^{\quad \succ \mapsto \searrow-\nwarrow}_{\quad \prec\mapsto \nearrow-\swarrow }&& \qua=\su(\dend)=\su^2(\as) \ar[d]^{\rm splitting}\\
\su^n(\lie), n\geq3\ar[rr]&&\su^n(\as), n\geq3
}
$$

\subsection{Gr\"obner-Shirshov bases}
The theory of Gr\"obner-Shirshov bases for Lie algebras was pioneered by Shirshov~\cite{Shirshov}, who established the foundational Composition-Diamond lemma in this context.
Later, Bokut~\cite{B76} extended Shirshov's approach to associative algebras, marking a substantial generalization of the method.
This broader framework laid the groundwork for the subsequent development of the theory of Gr\"obner bases in diverse algebraic settings.
Bergman~\cite{Bergman} further advanced this direction, refining the methods and contributing key insights into Gr\"obner basis theory for associative algebras.
In the realm of commutative polynomials, the Composition-Diamond lemma manifests as Buchberger's Theorem~\cite{Buch65, Buch70}, a cornerstone of Gr\"obner basis theory, computational algebraic geometry, and polynomial ideal theory.
Building on these foundational works, Drensky and Holtkamp~\cite{DH} extended the Shirshov's Composition-Diamond lemma to the setting of free algebras involving operations of arity two. This advancement bridged the classical results with more generalized algebraic structures, paving the way for further exploration in operadic and non-associative algebraic frameworks using Gr\"obner basis techniques.
It is worth mentioning that Gr\"obner basis theory and Gr\"obner-Shirshov basis theory are equivalent in the context of associative algebras. This equivalence not only highlights the versatility of the theory but also demonstrates its foundational importance in both computational and theoretical algebra.

An analogous extension to the realm of operads has been developed in~\cite{DK09, LV12}, where a systematic and comprehensive framework for Gr\"obner bases in the operadic context was established. These works provided tools for studying the combinatorial and algebraic properties of operads, enabling researchers to address complex questions about ideal theory, syzygies, and resolutions in this broader algebraic setting. This operadic generalization not only connects classical Gr\"obner basis theory to modern contexts but also opens avenues for exploring new applications in homotopical and categorical algebra.

\subsection{Motivation and outline of the paper}
Madariaga posed the following open problems~\cite{Mada14}:
\begin{quote}
If we iterate this splitting procedure, we obtain a series of structures with $2^n$ binary nonassociative
 operations, namely $\su^n(\lie)$-algebras. It would be interesting to have Gr\"obner-Shirshov bases for these structures,
 as well as for the algebras defined by the dual operads.
\end{quote}
This motivates our investigation into the theory of Gr\"obner-Shirshov bases for the operads $\su^n(\lie)$ with $n \geq 1$. However, this study faces significant challenges. On the one hand, the structures involved contain multiple operations and relations, making the computation of Gr\"obner-Shirshov bases highly intricate. On the other hand, the disuccessor operation $\su$ does not always preserve the Gr\"obner-Shirshov property. For example, the operad $\su({\mathcal{N}il})$ is known not to be Koszul~\cite{Va}, and by~\cite[Theorem 8.3.1]{LV12}, its defining relations do not form a Gr\"obner-Shirshov basis.
In other words, there exist operads $\mathcal{P}$ for which the disuccessor operation $\su$ fails to preserve Gr\"obner-Shirshov bases. For this reason, the present work focuses only on those operads where the disuccessor operations do preserve such bases. In particular, we study the cases
\[
\su(\lie) = \prelie, \quad  \su^2(\lie) = \su(\prelie) = \LDend.
\]
As far as we know, this is the first time that the relationship between the successor operation and Gr\"obner-Shirshov bases has been systematically studied.

\noindent {\bf Outline of the paper.}
In Section~\ref{sec:FRBF}, we recall several foundational concepts and preliminary results that are essential for our study of operadic Gr\"obner-Shirshov bases.

Section~\ref{sec:bsoperad} is devoted to demonstrate that the disuccessors of the associative operad, the Lie operad, and the pre-Lie operad each possess a Gr\"obner-Shirshov basis.
In more details, in Subsection~\ref{sec:As operad}, we use the associative operad $\as$ as a case study to illustrate the concept of disuccessors of binary operads.
We show that the disuccessor of $\as$---which yields the dendriform operad $\dend$---preserves Gr\"obner-Shirshov bases (Theorem~\ref{thm:gsbdend}).
This provides a new approach to constructing a Gr\"obner-Shirshov basis for the operad $\dend$.
Subsection~\ref{sec:lie operad} focuses on the Lie operad $\lie$ and its disuccessor, the pre-Lie operad $\prelie$. We give a new method to present a Gr\"obner-Shirshov basis for the $\prelie$ operad and discuss how successor constructions apply in this context (Theorem~\ref{thm:lipre}).
In Subsection~\ref{sec:prel operad}, we study the disuccessor of the $\prelie$ operad, namely the $\LDend$ operad corresponding to L-dendriform algebras. We construct a Gr\"obner-Shirshov basis for the $\LDend$ operad (Theorem~\ref{thm:gsbases} ) and use it to derive the free L-dendriform algebra on a given set.

In Section~\ref{sect:free}, we construct three linear bases for the free L-dendriform algebra (Theorem~\mref{thm:gsbld}): one obtained via the Gr\"obner-Shirshov basis method, and two others, $\lde$ and $\lw$, derived by taking the free two-magma algebra as an intermediate step and employing special classes of typed decorated planar rooted trees.

\smallskip
\noindent
\textbf{Notation.} Throughout this paper, we fix a base field $\bfk$, over which all vector spaces, algebras, tensor products, and linear maps are defined. Unless otherwise specified, by an algebra we mean a unital, associative, and possibly noncommutative algebra over $\bfk$. We denote by $S_n$ the symmetric group on $n$ elements, for $n \geq 1$, and by $\mathbb{N}$ the set of nonnegative integers.

\section{Operads, Gr\"obner-Shirshov bases and disuccessors}
\mlabel{sec:FRBF}
In this section, we introduce the preliminary notions and key algebraic tools that will serve as the foundation for the results developed in the subsequent sections. Our focus is primarily on operads, Gr\"obner-Shirshov bases and disuccessors of binary operads.

\subsection{Nonsymmetric operads and symmetric operads}
\mlabel{sub:basic definitions}
This subsection is dedicated to recalling the fundamental definitions and related concepts of operads.

\begin{defn}\cite[Section 5.1]{LV12}
  An {\bf $ \mathbb{S}$-module (resp.  arity-graded module) } over $\bfk$ is a family
\[M=\{M(0), M(1), \ldots, M(n),\ldots\}, \quad (\text{ resp. } M=\{M_0, M_1, \ldots, M_n,\ldots\} )\]
of right $\bfk[\mathbb{S}_n]$-modules $M(n)$ (resp. $\mathbb{N}$-modules $M_n$). When $M(0) = 0$, the $\mathbb{S}$-module (resp. $\mathbb{N}$-module) is called {\bf reduced}.
\mlabel{defn:smodule}
\end{defn}

\begin{defn}~\cite[Section 5.8.4]{LV12}
{\bf A nonsymmetric operad} is an arity-graded module $\np = \{\np_n\}_{n\geq 0}$ equipped with partial compositions:
\[\circ_i:\np_m\ot\np_n\ra \np_{m-1+n}, \quad\text{ for }\,1\leq i\leq m,\]
satisfying the relations
$$ \left\{
\begin{aligned}
(\lambda\circ_i \mu)\circ_{i-1+j} v = & \  \lambda\circ_i(\mu\circ_j v),\quad \text{ for } 1\leq i\leq l, 1\leq j\leq m, \\
(\lambda\circ_i \mu)\circ_{k-1+m} v = & \ (\lambda\circ_k v)\circ_i\mu,\quad\text{ for }\,1\leq i\leq k\leq l,
\end{aligned}
\right.
$$
for any $\lambda\in\np_l, \mu\in\np_m, v\in\np_n.$
\end{defn}

\begin{defn}~\cite[Definition 3, Section 5.1.0]{LV12}
{\bf A symmetric operad} is an $\mathbb{S}$-modules $\np = \{\np(n)\}_{n\geq 0}$ equipped with partial compositions
\[\circ_i:\np(m)\ot\np(n)\ra \np(m-1+n), \quad \text{ for }\,1\leq i\leq m,\]
satisfying equivariance with respect to the symmetric groups and the axioms:
$$ \left\{
\begin{aligned}
(\lambda\circ_i \mu)\circ_{i-1+j} v = & \  \lambda\circ_i(\mu\circ_j v),\quad \text{ for } 1\leq i\leq l, 1\leq j\leq m, \\
(\lambda\circ_i \mu)\circ_{k-1+m} v = & \ (\lambda\circ_k v)\circ_i\mu,\quad\text{ for }\,1\leq i\leq k\leq l,
\end{aligned}
\right.
$$
for any $\lambda\in\np(l), \mu\in\np(m), v\in\np(n).$
\end{defn}

We now recall several foundational concepts related to planar rooted trees, which will play a central role in our combinatorial constructions and operadic interpretations throughout the paper.

\begin{defn}({\bf Decorated trees})~\cite[Definition 2.1]{BBG13}
\begin{enumerate}
\item Let $\mathcal{T}$ denote the set of planar rooted trees together with the trivial tree $|$. If $t \in \mathcal{T}$ has $n$ leaves, we call $t$ an {\bf $n$-tree.} For each vertex $v$ of $t$, we let $\ip(v)$ denote the set of inputs of $v$.

\item  Let $\mathcal{V}=\{\mathcal{V}(n)\}_{n\geq1}$ be a family of sets and let $t$ be an $n$-tree. By a {\bf decorated tree (or tree monomial)} we mean a planar rooted tree $t$ together with a label on the leaves of $t$ by distinct positive integers and a decoration on each vertex $v$ of $t$ by the element of $\mathcal{V}(|\operatorname{In}(v)|)$. Let $t(\mathcal{V})$ denote the set of decorated trees for $t$ and denote by
$$
\mathcal{T}(\mathcal{V})=\coprod_{t \in \mathcal{T}} t(\mathcal{V}) .
$$
If $ t \in t(\mathcal{V})$ is an $n$-tree $t$, we call $ t$ a {\bf decorated $n$-tree}. Denote by $\mathcal{T}(\mathcal{V})(n)$ the set of all decorated $n$-trees.

\item  For $ t \in \mathcal{T}(\mathcal{V})$, we let $\operatorname{V}( t)$ (resp. $\operatorname{L}( t)$ ) denote the set of decorations (resp. label) of the vertices (resp. leaves) of $ t$.
\end{enumerate}
\end{defn}

Given a decorated tree, the label of the leaves extends naturally to a label of all edges as follows:
each edge, viewed as the output of a vertex or a leaf $v$, is assigned the label ${\rm min}(v)$.
Here, ${\rm min}(v)$ denotes the minimum among the labels of the inputs to $v$; for a leaf
$v$, this is simply the label originally assigned to $v$.

\begin{defn}({\bf Shuffle tree monomial})~\cite{BD}.
A tree monomial is called a {\bf shuffle tree monomial} if its leaf label satisfies the local increasing condition: at each vertex, the input labels, read from left to right, form an increasing sequence.
\end{defn}

See Example~\mref{ex:st} below for examples of shuffle tree monomials.
Denote by $\mathcal{T}_{\shuffle}(\mathcal{V})(n)$ the set of all shuffle $n$-tree monomials, and denote by
$$
\mathcal{T}_{\shuffle}(\mathcal{V}) := \bigsqcup_{n\geq1} \mathcal{T}_{\shuffle}(\mathcal{V})(n).
$$

\begin{exam}\mlabel{ex:st}
Let $\mathcal{V}=\{\mathcal{V}(n)\}_{n \geq 1}=\{\mathcal{V}(2)\}$ be a family of sets. Then all shuffle 3-tree monomials are
\[
\treeo[x=0.5cm,y=0.5cm]{
\eoo`\nu@
\draw (o) -- +(0,-1);
\draw (o) \eee-1,1`l`\mu@ \nnn1,1`r`3@ (o)--(l) (o)--(r);
\draw (l) \nnn-1,1`ll`1@ \nnn1,1`lr`2@ (l)--(ll) (l)--(lr);
\node[left]at(-0.1,-0.5){$1$};
\node[left]at(-0.4,0.25){$1$};
\node[right]at(0.6,0.3){$3$};
\node[left]at(-1.6,1.4){$1$};
\node[right]at(-0.4,1.4){$2$};
}\quad \text { and }\quad
\treeo[x=0.5cm,y=0.5cm]{
\eoo`\nu@
\draw (o) -- +(0,-1);
\draw (o) \nnn-1,1`l`1@ \eee1,1`r`\mu@ (o)--(l) (o)--(r);
\draw (r) \nnn-1,1`rl`2@ \nnn1,1`rr`3@ (r)--(rl) (r)--(rr);
\node[left]at(-0.1,-0.5){$1$};
\node[left]at(-0.4,0.25){$1$};
\node[right]at(0.6,0.3){$2$};
\node[right]at(1.6,1.4){$3$};
\node[right]at(-0.2,1.4){$2$};
}
\text{,\quad where }\, \mu,\nu\in \mathcal{V}(2).
\]
\end{exam}

\begin{lemma}\cite{BD}
The  $\mathbb{S}$-module $\bfk\mathcal{T}(\mathcal{V}) := \{\bfk\mathcal{T}(\mathcal{V})(n)\}_{n\geq1}$, together with the partial composition product  given by the graft
ing of rooted trees and the natural embedding $i : \mathcal{V} \to\bfk\mathcal{T}(\mathcal{V})$, is the free symmetric operad on  a family  $\mathcal{V}=\{\mathcal{V}(n)\}_{n\geq1}$ of sets.
\end{lemma}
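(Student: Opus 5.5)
We prove the lemma by checking the universal property of the free symmetric operad directly. The main difficulty is the symmetric group action, since decorated trees carry leaf labels by distinct positive integers. Fix the convention that an element of $\mathcal{T}(\mathcal{V})(n)$ is a decorated $n$-tree whose leaf labels form the set $\{1,\dots,n\}$. The right action of $\sigma\in S_n$ relabels each leaf $i$ by $\sigma^{-1}(i)$; this is compatible with the convention of right $\bfk[\mathbb S_n]$-modules in Definition~\ref{defn:smodule}. It is a free action on the basis of decorated trees, and we extend it linearly to $\bfk\mathcal{T}(\mathcal{V})(n)$.

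For $s\in\mathcal{T}(\mathcal{V})(m)$ and $t\in\mathcal{T}(\mathcal{V})(n)$, define $s\circ_i t$ as follows. Graft the root edge of $t$ onto the leaf of $s$ labelled $i$. Shift the labels of $t$ by $i-1$, and raise by $n-1$ the labels of $s$ that exceed $i$. We then verify three things.
\begin{itemize}
\item The two associativity axioms hold. Both sides of each axiom describe the same tree obtained by grafting twice, and the relabelings agree by a direct count of the shifts.
\item Equivariance holds, i.e.\ $(s\sigma)\circ_i(t\tau)=(s\circ_{\sigma(i)}t)\cdot(\sigma\circ_i\tau)$, where $\sigma\circ_i\tau$ is the usual block-substitution permutation. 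This holds because relabeling leaves commutes with grafting up to exactly this permutation.
\item The trivial tree $|$ is a unit.
\end{itemize}
Hence $\bfk\mathcal{T}(\mathcal{V})$ is a symmetric operad. The embedding $i$ sends $v\in\mathcal{V}(n)$ to the corolla with one vertex decorated by $v$ and leaves labelled $1,\dots,n$ from left to right.

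For freeness, let $\mathcal{P}$ be any symmetric operad and $f:\mathcal{V}\to\mathcal{P}$ a family of maps of sets. Define $F$ on decorated trees by induction on the number of vertices:
\begin{itemize}
\item $F(|)=\mathrm{id}$;
\item $F(\text{corolla}_v\cdot\sigma)=f(v)\cdot\sigma$;
\item for a tree with at least two vertices, choose a decomposition $t=(s\circ_j c)\cdot\sigma$ with $c$ a corolla at a top vertex, and set $F(t)=(F(s)\circ_j F(c))\cdot\sigma$.
\end{itemize}
Uniqueness is immediate. Every decorated tree is generated from corollas by iterated partial compositions and the symmetric action, so any operad morphism extending $f$ must satisfy these formulas.

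The main obstacle is well-definedness, since a tree with several top vertices admits several decompositions. We handle this by comparing any two decompositions. If two different top vertices are peeled off, the two expressions are related by the parallel axiom $(\lambda\circ_i\mu)\circ_{k-1+m}v=(\lambda\circ_k v)\circ_i\mu$, after using equivariance to move the relevant leaves adjacent. If the same vertex is peeled off with different permutations, equivariance in $\mathcal{P}$ reconciles them. Arguing by induction on the number of vertices, any two decompositions give the same value. One may equivalently define $F$ by a canonical bottom-up evaluation, contracting edges in a fixed order. Once $F$ is well defined, it commutes with $\circ_i$ and with the $S_n$-actions by construction, and it extends linearly to $\bfk\mathcal{T}(\mathcal{V})$. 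This gives the universal property, so $\bfk\mathcal{T}(\mathcal{V})$ is the free symmetric operad on $\mathcal{V}$, as in~\cite{BD}.
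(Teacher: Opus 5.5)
Your argument is correct, but it does not follow the paper. The paper gives no proof at all and simply imports the statement from \cite{BD}, whereas you verify the operad axioms and the universal property directly. Your reading of ``free on a family of sets'' is the right one. The planar order at each vertex encodes a free $S_k$-action on the generators, so the $S_n$-action on trees is free, and this freeness is exactly what makes your clause $F(\text{corolla}_v\cdot\sigma)=f(v)\cdot\sigma$ well defined. If $\mathcal{V}$ carried its own action, as with $\mu^{(12)}=\nu$ later in the paper, planar decorated trees would overcount in the symmetric setting.

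What your route costs is the well-definedness check. Because you peel off an arbitrary top vertex, you need the confluence argument you sketch: the same vertex with different labelings is handled by equivariance, and two distinct top vertices by the parallel axiom. The induction hypothesis must then include equivariance of $F$ on smaller trees, which does follow from well-definedness there.

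A standard alternative is to decompose at the root instead. Write $t=\gamma(\text{corolla}_v;t_1,\dots,t_k)\cdot\sigma$, with $\gamma$ the total composition and each $t_r$ the order-preservingly relabelled subtree above the root. This decomposition is unique, so $F(t)=\gamma_{\mathcal P}(f(v);F(t_1),\dots,F(t_k))\cdot\sigma$ needs no well-definedness argument. The remaining work is the routine check that $F$ respects $\circ_i$ and the $S_n$-action.

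In your version, compatibility with $\circ_i$ is not literally ``by construction''. It needs a short induction on the number of vertices of the second argument $t_2$. A non-root top vertex of $t_2$ is also a top vertex of $t_1\circ_i t_2$; peel it and use sequential associativity and equivariance in $\mathcal{P}$. The base case where $t_2$ is a corolla comes straight from your definition.
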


For any $\mathbb{S}$-module  $\np = \{\np(n)\}_{n\geq 1}$, we denote by $\np^f$ the underlying arity-graded module:
$$\Big(\np^f\Big)_n:=\np(n),\quad n\geq 1,$$
which gives the forgetful functor from the category of $\mathbb{S}$-modules(or  $\mathbb{S}$-set) to the category of $\mathbb{N}$-modules(or  $\mathbb{N}$-set).

\begin{lemma}\cite{BD}
The  $\mathbb{N}$-module $\bfk\mathcal{T}_{\shuffle}(\mathcal{V}):= \{\bfk\mathcal{T}_{\shuffle}(\mathcal{V})(n)\}_{n\geq1}$ is isomorphism to  $\bfk\mathcal{T}(\mathcal{V})^f$,  and the $\bfk\mathcal{T}_{\shuffle}(\mathcal{V})$, together with the partial composition product given by the grafting of rooted trees and the natural embedding $i : \mathcal{V}^f \to\bfk\mathcal{T}_{\shuffle}(\mathcal{V})$, is the free shuffle operad on an $\mathbb{S}$-sets $\mathcal{V}=\{\mathcal{V}(n)\}_{n\geq1}$.
\end{lemma}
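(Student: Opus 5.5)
The plan is to build the isomorphism in two stages and then check that the induced composition on shuffle trees is grafting. Put an order on the leaf labels of a decorated tree $t\in\mathcal{T}(\mathcal{V})(n)$ by the edge-labelling convention above: each edge is labelled by the minimum leaf label above it. Define a \emph{normalization} map $\mathcal{T}(\mathcal{V})(n)\to\mathcal{T}_{\shuffle}(\mathcal{V})(n)$ as follows. At every vertex $v$, permute the inputs of $v$ so that their edge labels increase from left to right. Simultaneously replace the decoration $\mu\in\mathcal{V}(|\ip(v)|)$ by $\mu\cdot\sigma_v$, where $\sigma_v$ is the reordering permutation; this uses the $\mathbb{S}$-set structure on $\mathcal{V}$. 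In the free symmetric operad $\bfk\mathcal{T}(\mathcal{V})$, the relation $\mu\cdot\sigma\circ(\ldots)=\mu\circ(\ldots)$ with permuted inputs is exactly equivariance. So $\bfk\mathcal{T}(\mathcal{V})(n)$ should be understood as the space of decorated trees modulo these local reorderings, and each class contains exactly one shuffle tree monomial.

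The key steps, in order, are these.

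\emph{Step 1 (existence).} Show by induction on the number of vertices that every class contains a shuffle representative. Normalize the subtrees first, then sort the root's inputs by their minimal labels; the minimal labels are distinct because the leaf labels are distinct.

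\emph{Step 2 (uniqueness).} Show that two shuffle tree monomials in the same class are equal. A reordering at $v$ that preserves the increasing condition must be the identity, since the input labels are distinct. Hence $\sigma_v=\mathrm{id}$ everywhere. This gives a bijection of sets $\mathcal{T}_{\shuffle}(\mathcal{V})(n)\cong\mathcal{T}(\mathcal{V})(n)/\!\sim$, which linearizes to the stated $\mathbb{N}$-module isomorphism $\bfk\mathcal{T}_{\shuffle}(\mathcal{V})\cong\bfk\mathcal{T}(\mathcal{V})^f$.

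\emph{Step 3 (closure of grafting).} Define the shuffle compositions $\circ_{i,\tau}$, where $\tau$ is a shuffle permutation relabelling the grafted leaves. Check that grafting a shuffle tree at leaf $i$ with a shuffle relabelling yields a shuffle tree. Local increasingness is preserved because a shuffle relabelling keeps the minimum of the grafted tree at the position of $i$ and preserves the relative order inside each block.

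\emph{Step 4 (freeness).} Prove the universal property. Given a shuffle operad $\np$ and a map $f:\mathcal{V}^f\to\np$ of $\mathbb{N}$-sets, extend it to $\bfk\mathcal{T}_{\shuffle}(\mathcal{V})$ by induction on vertices. Decompose a shuffle tree uniquely as its root corolla shuffle-composed with subtrees, and send it to the corresponding shuffle composite of images. Uniqueness is forced by this decomposition. Compatibility with the shuffle operad axioms reduces to the associativity of grafting.

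The main obstacle I expect is Step 4, specifically the uniqueness of the decomposition of a shuffle tree as an iterated shuffle composition. The labels attached to each subtree must be recoverable, and the associativity axioms for shuffle compositions must match tree grafting under the induced relabellings. I would first try to avoid the combinatorics by citing the general statement in Dotsenko--Khoroshkin, namely that the forgetful functor from symmetric to shuffle operads commutes with free constructions. Combined with Steps 1--2, this identifies $\bfk\mathcal{T}(\mathcal{V})^f$, the free symmetric operad on $\mathcal{V}$ with its symmetric structure forgotten, with the free shuffle operad on $\mathcal{V}^f$.
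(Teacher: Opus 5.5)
The paper gives no proof of this lemma; it is quoted from Bremner--Dotsenko. Your argument is the standard one behind that citation, and it is correct:
\begin{itemize}
\item sort the inputs of each vertex by minimal leaf labels to get a unique shuffle representative;
\item check that shuffle grafting preserves the shuffle condition;
\item obtain the universal property from the canonical root-corolla decomposition, or equivalently from the Dotsenko--Khoroshkin result that forgetting the symmetric structure commutes with free constructions.
\end{itemize}

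Your version does something the bare citation does not. Reading $\bfk\mathcal{T}(\mathcal{V})(n)$ as decorated trees \emph{modulo} local reorderings is genuinely needed. Under the paper's literal definition (all planar trees, arbitrary distinct leaf labels, vertices decorated by elements of $\mathcal{V}$), the claimed isomorphism fails for a nontrivial action. Take $\mathcal{V}(2)=\{\mu,\nu\}$ with $\mu^{(12)}=\nu$, exactly the setting of the disuccessor construction: there are four such monomials in arity $2$ but only two shuffle ones.

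Two places to tighten.

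In Step~2, the remark that an order-preserving reordering at a vertex is trivial only handles two trees related by one simultaneous family $(\sigma_v)_v$. Two shuffle trees in the same class may a priori be linked through a chain of non-shuffle trees. You can fix this in either of two ways:
\begin{itemize}
\item state that any two members of a class differ by a single such family; or
\item show directly that your normalization map is unchanged by one local move and fixes shuffle trees. Here edge labels are invariant under reorderings, and $(\mu\cdot\sigma)\cdot\sigma'=\mu\cdot(\sigma\sigma')$.
\end{itemize}

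In Step~4, if you take the citation route, Steps~1--2 only give the $\mathbb{N}$-module isomorphism. Step~3 is what identifies the transported shuffle compositions on $\bfk\mathcal{T}(\mathcal{V})^f$ with grafting, because a shuffle grafting of shuffle trees needs no renormalization. So you need Steps~1--3, not just 1--2, together with the cited result. Note also that this citation is essentially the lemma itself, which is what the paper relies on.
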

\nc\fnsop{\mathcal{T}^{\rm ns}}
Let $\mathcal{V}=\{\mathcal{V}_n\}_{n\geq1}$ be a family  of sets.  Denote by $\fnsop(\mathcal{V})_n$ the set of all decorated $n$-trees whose leaves are labeled from 1 to $n$ from left to right. Then  $\fnsop(\mathcal{V})_n$ is a subset of  $\mathcal{T}(\mathcal{V})(n)$ and $\mathcal{T}_\shuffle(\mathcal{V})(n)$.
Then
$$
\bfk\fnsop(\mathcal{V}) := \{\bfk\fnsop(\mathcal{V})_n\}_{n\geq 1},
$$
together with the natural embedding, is the free nonsymmetric operad~\cite{LV12} on a family  $\mathcal{V}=\{\mathcal{V}_n\}_{n\geq1}$ of sets.
\begin{defn}~\cite[Definition 5.4.1.2]{BD}. Let $t$ be a shuffle tree monomial. For each leaf $\ell$ of $ t$ in the total order, we record the decorations of vertices of the path from the root of $ t$ to $\ell$, forming a word in the alphabet $\cal{V}$. The sequence of these words, denoted $\operatorname{Path}(t)$, is called {\bf the path sequence of the tree monomial} $t$.
\end{defn}

The following ordering is adapted from~\cite[Section 3.2.1]{DK09} to suit our purposes.

\begin{defn}\label{defn:lexpath}
Let $\mathcal{V}=\{\mathcal{V}(n)\}_{n\geq1}$ be a family of sets with a well-order on $\mathcal{V}$.
Each tree monomial determines a sequence of leaf labels and a sequence of paths.
To compare two tree monomials, we proceed as follows:
\begin{enumerate}
\item First, compare the sequences of leaf labels using reverse degree-lexicographic order (i.e., from right to left).

\item If these are equal, compare the path sequences word by word using the standard degree-lexicographic order.
\end{enumerate}
This ordering is called the {\bf leaf-lexicographic-path ordering}.
\end{defn}

We give an example for better understanding.

\begin{exam}\mlabel{ex:lexpath}
Let $\mathcal{V}=\mathcal{V}(2)=\{\mu,\nu\}$ with  $\nu>\mu$. For the shuffle tree monomials from Example~\ref{ex:st}, the path sequences are
$(\nu\mu, \nu\mu, \nu)$, $(\nu\mu, \nu, \nu\mu)$ and $(\nu, \nu\mu, \nu\mu)$,
respectively.
Moreover, we have
{\small \begin{align*}
\twool \nu\mu312\longmapsto (123, (\nu\mu, \nu\mu, \nu)),\quad \twool \nu\mu213\longmapsto (132, (\nu\mu, \nu, \nu\mu)),\quad
\twoor \nu1\mu23 \longmapsto (123, (\nu, \nu\mu, \nu\mu)).
 \end{align*}}
Since $123 > 132$ by the reverse degree lexicographic order and $\nu\mu > \nu$ by the degree-lexicographic order,
we conclude
\begin{align*}
\twool \nu\mu312>\twoor \nu1\mu23>
\twool \nu\mu213.
 \end{align*}
\end{exam}

\begin{remark}
By adjusting the comparison order specified in Definition~\ref{defn:lexpath},
compare (b) first and then (a),
we arrive at the {\bf path-lexicographic ordering} defined in~\cite[Section 3.2.1]{DK09}. Under this  path-lexicographic ordering framework, the ordered result for Example~\ref{ex:lexpath} is as follows:
\begin{align*}
\twool \nu\mu312>\twool \nu\mu213>\twoor \nu1\mu23.
 \end{align*}
\end{remark}

\subsection{Gr\"obner-Shirshov bases for operads}
A systematic framework for Gr\"obner bases in the operadic context was established in~\cite{DK09, LV12}.
In this subsection, we translate it into the framework of Gr\"obner-Shirshov bases,
providing an alternative yet equivalent formulation to the Gr\"obner basis theory described in~\cite{DK09}.

To facilitate this approach, we employ the notion of $\star$-trees, which can be viewed as the tree-theoretic counterparts of $\star$-bracketed words~\cite{BCQ,GSZ}. These $\star$-trees provide a combinatorial representation of partial compositions in free operads and are essential in defining compositions, leading monomials, and reduction procedures necessary for establishing a Gr\"obner-Shirshov basis.

\begin{defn}
Let $\star=\{\star_n\}_{n\geq 1}$ be a family of symbols disjoint from the collection
$\cal{V}=\{\cal V(n)\}_{n\geq 1}$ of sets, and let $\cal V^\star=\{\cal V(n)\sqcup \{\star_n\}\}_{n\geq 1}$.
\begin{enumerate}
\item  A tree $q$ in $\cal T_{\shuffle}({\cal V^\star})$ is called a {\bf $\star$-tree} on $\cal V$ if it contains exactly one occurrence of a symbol from the family $\star=\{\star_n\}_{n\geq 1}$.
The set of all such $\star$-trees is denoted by $\cal{T}_{\shuffle}(\cal{V})^\star$.
We write $q\in \cal{T}_{\shuffle}(\cal{V})^{\star_n}$ to indicate that the unique occurrence of a $\star$-symbol in $q$ is the symbol $\star_n$.

\item  For $q\in \cal{T}_{\shuffle}(\cal{V})^{\star_n}$ and $u \in\cal{T}_{\shuffle}(\cal{V})(n)$ with $n\geq 1$, we define $q|_{ u}$ to be the tree obtained by replacing the $\star_n$ in $q$ by $u$. This can be extended linearly by $q\suba{s}:=\sum_i c_i q\suba{u_i}$ for
$s =\sum_i c_i u_i \in \bfk\cal{T}_{\shuffle}(\cal{V})(n)$.
\end{enumerate}
\mlabel{defn:startree}
\end{defn}

For example,
$$\text{if } q=\twool \mu{\star_2 }312\,\text{ and }\, u=\oneo {\nu}12, \text{ then } q|_u={\twool \mu\nu312}. $$

\begin{defn}
A {\bf monomial order} on $\cal{T}_{\shuffle}(\cal{V})$ is a well-order $\leq$ on $\cal{T}_{\shuffle}(\cal{V})$ satisfying
$$u>v\Rightarrow q|_u > q|_v, \quad\text{ for }\, u,v\in \cal{T}_{\shuffle}(\cal{V})(n), \, q\in\cal{T}_{\shuffle}(\cal{V})^{\star_n}, \, n\geq 1.$$
\end{defn}

Let $\leq$ be a monomial order on $\cal{T}_{\shuffle}(\cal{V})$. For $f\in \bfk\cal{T}_{\shuffle}(\cal{V})$,
denote by $\bar{f}$ the leading monomial of $f$.

\begin{lemma}
 Let $\mathcal{V}=\{\mathcal{V}(n)\}_{n\geq1}$ be a family of sets with a well-order. Then the leaf-lexicographic-path order on $\cal{T}_{\shuffle}(\cal{V})$ is a monomial order.
\end{lemma}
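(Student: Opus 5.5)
The proof has two parts: the leaf-lexicographic-path order is a well-order, and it is compatible with substitution into $\star$-trees. Throughout we work arity by arity. Only trees of the same arity $n$ are compared in a substitution $q|_u$ versus $q|_v$, so it suffices to treat each $\cal{T}_{\shuffle}(\cal{V})(n)$ separately. Across arities we compare by arity first.

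\emph{Total and well-ordered.} A shuffle tree monomial is determined by its leaf-label sequence together with its path sequence. This is the injectivity statement underlying \cite[Section 3.2.1]{DK09}: from the path sequence and labels one reconstructs the tree by reading off common prefixes of paths. Hence the order is a total order, namely a lexicographic combination of two total orders. The set of leaf-label sequences in arity $n$ is finite (permutations of $\{1,\dots,n\}$), so the reverse degree-lexicographic order on them is a well-order. For the second key we need that each $\cal{V}(k)$ is well-ordered and the arity of vertices is bounded by $n$. I would additionally assume, as is implicit in the paper, that only finitely many $\cal{V}(k)$ are nonempty, or that the vertex count is bounded via arities $\geq 2$ (arity-one generators would break this). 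Under this assumption, the path words are bounded in length. Degree-lexicographic order on words of bounded length over a well-ordered alphabet is a well-order, and so is its lexicographic extension to finite sequences of fixed length $n$. A lexicographic product of well-orders is a well-order.

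\emph{Compatibility.} Let $u>v$ in arity $n$ and $q\in\cal{T}_{\shuffle}(\cal{V})^{\star_n}$. Substituting $u$ at the $\star_n$ vertex means that the leaves of $q|_u$ are those of $q$ outside the star, plus the leaves of $u$ relabeled by an order-preserving map into the label set attached to the star's inputs. The shuffle condition forces this map to be the unique increasing bijection onto the star's labels, which is the same for $u$ and $v$.

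Case (a): the leaf sequences of $u$ and $v$ differ. The leaf sequence of $q|_u$ is obtained by inserting the relabeled sequence of $u$ into the sequence of $q$ in a fixed block of positions, and this insertion is identical for $u$ and $v$. The first difference read from the right therefore occurs inside that block and has the same sign as for $u$ versus $v$, because the relabeling map is increasing.

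Case (b): the leaf sequences of $u$ and $v$ agree. Then the leaf sequences of $q|_u$ and $q|_v$ agree. Each path word of $q|_u$ at a leaf coming from $u$ has the form $p\,w$, where $p$ is the path in $q$ to the star with the star deleted and $w$ is the corresponding path word of $u$. Paths to the other leaves are unchanged. Prefixing by a fixed word $p$ preserves degree-lexicographic order, and the positions of $u$'s leaves in the total leaf order are the same for $u$ and $v$. So the first differing word has the correct sign.

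The main obstacle is the careful bookkeeping in case (a). There one must check that the positions in the leaf order occupied by $u$'s leaves are identical for $u$ and $v$, since the edge labels of the star are determined by $q$ alone. One must also check that reverse lexicographic comparison is unaffected by the common surrounding entries. Case (b) relies on the path sequence being listed leaf by leaf in the fixed leaf order.
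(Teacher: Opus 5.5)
\textbf{Gap in Case (a).} Your compatibility argument describes $q|_u$ as if every input of the $\star_n$-vertex were a leaf of $q$, so that the leaves of $u$ survive as relabeled leaves of $q|_u$. That is not the general situation. A $\star$-tree may carry arbitrary subtrees $\beta_1,\dots,\beta_n$ on the inputs of the star; by the shuffle condition their minimal labels satisfy $m_1<\dots<m_n$. Forming $q|_u$ grafts $\beta_j$ onto leaf $j$ of $u$. The star's block in the leaf sequence of $q|_u$ is then the concatenation of the leaf sequences of $\beta_{\sigma_u(1)},\dots,\beta_{\sigma_u(n)}$, where $\sigma_u$ is the leaf sequence of $u$. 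These blocks have different lengths, and their last entries bear no relation to their minima, so a right-to-left comparison is not transported.

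Here is a concrete counterexample. Take $u=\mu(\mu(1,2),3)$ and $v=\mu(\mu(1,3),2)$, so $u>v$ since $123>132$, exactly as in Example~\ref{ex:lexpath}. Take $q=\star_3\big(1,\mu(2,5),\mu(3,4)\big)$. Then
\[
q|_u=\mu\big(\mu(1,\mu(2,5)),\mu(3,4)\big),\qquad q|_v=\mu\big(\mu(1,\mu(3,4)),\mu(2,5)\big),
\]
with leaf sequences $12534$ and $13425$. Read from the right, the first difference is $4$ versus $5$, so $q|_v>q|_u$. Thus, with the right-to-left reading you adopted (the one suggested by the parenthetical in Definition~\ref{defn:lexpath}), the order is not a monomial order, and no amount of bookkeeping will rescue Case (a).

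\textbf{What does work.} Compare leaf sequences from the left, using the opposite of the lexicographic order; this still gives $123>132$. The leaf sequence of a shuffle tree begins with its minimal label. Let $k$ be the first index with $\sigma_u(k)\neq\sigma_v(k)$. The blocks before position $k$ coincide and have equal lengths, and the entries of $q$ outside the star's block are a common prefix and suffix. Hence the first difference between $q|_u$ and $q|_v$ is $m_{\sigma_u(k)}$ versus $m_{\sigma_v(k)}$, which are ordered like $\sigma_u(k)$ versus $\sigma_v(k)$.

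\textbf{Case (b) needs the same generalization.} Write $u_j$ for the path word of $u$ at its leaf $j$. The path word of a leaf of $\beta_j$ in $q|_u$ is $p\,u_j\,b$, where $b$ is its path inside $\beta_j$. Let $j_0$ be the first index with $u_{j_0}\neq v_{j_0}$. Then the smallest label whose path changes is $m_{j_0}$, and a common prefix $p$ and suffix $b$ do not affect the degree-lexicographic comparison.

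\textbf{Minor points.} The paper itself gives no argument for this lemma; it only cites Bremner--Dotsenko. Your extra finiteness hypothesis in the well-ordering part is unnecessary: degree-lexicographic order on all finite words over a well-ordered alphabet is already a well-order, because length is compared first. So unary generators cause no trouble.
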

\begin{proof}
It follows from~\cite[ Proposition 5.4.1.7]{BD}.
\end{proof}

\begin{defn}
Let $\leq$ be a monomial order on $\cal{T}_{\shuffle}(\cal{V})$ and $f, g\in \bfk\cal{T}_{\shuffle}(\cal{V})$ monic with respect to the $\leq$. Define two kinds of compositions.
\begin{enumerate}
\item If there exist $u, v, w \in
\cal{T}_{\shuffle}(\cal{V})$ such that 
$\bar{f}\circa \bar{g}:=w=\bar{f}\circ_i u=v\circ_j \bar{g}$ with $\max \big\{|\operatorname{V}(\bar{f})|, |\operatorname{V}(\bar{g})|\big\} < |\operatorname{V}(w)|<|\operatorname{V}(\bar{f})|+|\operatorname{V}(\bar{g})|$,
we call the tree polynomial
$$(f, g)_{w}:=f\circ_i u-v\circ_j g$$
the {\bf intersection composition of $f$ and $g$ with respect to} $(u,v)$.

\item  If there exist $q \in \cal{T}_{\shuffle}(\cal{V})^\star$ and $w \in \cal{T}_{\shuffle}(\cal{V})$ such  that
$\bar{f}\circa \bar{g}:=w=\bar{f}=q|_{\bar{g}},$ we call the tree polynomial
$$(f, g)_{w}:=f-q|_{g}$$
the {\bf including composition of $f$ and $g$ with respect to} $q$.
\end{enumerate}
The above tree monomial $w$ is called the {\bf ambiguity} of the composition $(f, g)_{w}$.
\mlabel{defn:comp}
\end{defn}

\begin{defn}
Let $\leq$ be a monomial order on $\cal{T}_{\shuffle}(\cal{V})$,
$S = \{S(n)\}_{n\geq 1}$ 
 a set of monic tree polynomials in $\bfk\cal{T}_{\shuffle}(\cal{V})$ and  $w \in \cal{T}_{\shuffle}(\cal{V}).$
\begin{enumerate}
\item For $u, v\in \bfk\cal{T}_{\shuffle}(\cal{V}),$ we say that $u$ and $v$ are {\bf congruent modulo} $(S, w)$ and denote this by
    $$u\equiv v\, \text{ mod }\, (S, w)$$
    if $u-v=\Sigma_{i}c_{i}q_{i}|_{s_{i}}$ for some
\[
c_{i}\in \bfk\setminus\{0\}, \quad q_{i}\in \cal{T}_{\shuffle}(\cal{V})^{\star_{n_i}}, \quad s_{i}\in S(n_i)\,\text{ such that }\, q_{i}|_{\bar{s_{i}}}<w.
\]

\item Let $f, g \in \bfk\cal{T}_{\shuffle}(\cal{V})$,
$S$ be a set of monic  polynomials in $\bfk\cal{T}_{\shuffle}(\cal{V}).$ Then the composition $(f, g)_{w}$ is  called {\bf trival modulo $(S,w)$} if
$$(f, g)_{w}\equiv 0\,\text{ mod }\,(S, w).$$
\end{enumerate}
\end{defn}
\begin{defn}
Let $\leq$ be a monomial order on $\cal{T}_{\shuffle}(\cal{V})$ and $S\subseteq \bfk\cal{T}_{\shuffle}(\cal{V})$
a set of monic polynomials. The $S$ is called a {\bf Gr\"{o}bner-Shirshov bases} with respect to $\leq$, if for all pairs $f, g \in S$, every intersection composition of the form
$(f, g)_{w}\emph{}$ is trivial modulo $(S, w)$ and every including composition of the form $(f,g)_{w}$ is trivial modulo $(S, w).$
\end{defn}

The following is the operadic version of the Composition-Diamond lemma, which serves as a fundamental tool to compute normal forms of operadic expressions and to construct explicit bases for quotient operads defined by generators and relations.

\begin{theorem}{\rm{(Composition-Diamond lemma)~\label{Composition-Diamond lemma}~\cite[Theorem 1]{DK09}
Let $\leq$ be a monomial order on $\cal{T}_{\shuffle}(\cal{V})$ and $S = \{S(n)\}_{n\geq 1}$ 
 a set of monic tree polynomials in $\bfk\cal{T}_{\shuffle}(\cal{V})$.  Then the following
statements are equivalent:
 \begin{enumerate}
\item[(I)] $S $ is a Gr\"{o}bner-Shirshov basis in $\bfk\cal{T}_{\shuffle}(\cal{V})$.

\item[(II)]  $ f\in \Id(S)\Rightarrow \bar{f}=q|_{\overline{s}}$
for some $n\geq 1, q \in \cal{T}_{\shuffle}(\cal{V})^{\star_n}$ and $s\in S(n)$.

\item[(II')]  $f\in \Id(S)\Rightarrow
f=\alpha_1q_1|_{s_1}+\alpha_2q_2|_{s_2}+ \cdots+ \alpha_k q_k|_{s_k}$
for some $k\geq 1, q_i\in \cal{T}_{\shuffle}(\cal{V})^{\star_{n_i}}$ and $s_i\in S(n_i)$ such that $q_1|_{\overline{s_1}}>q_2|_{\overline{s_2}}>\cdots>q_k|_{\overline{s_k}}$.

\item[(III)] $\bfk\cal{T}_{\shuffle}(\cal{V})=\bfk\operatorname{Irr}(S) \oplus \operatorname{Id}(S)$, where
$$
\operatorname{Irr}(S):=\cal{T}_{\shuffle}(\cal{V}) \backslash\left\{\left.q\right|_{\bar{s}} \mid q \in \cal{T}_{\shuffle}(\cal{V})^{\star_n}, s \in S(n),  n\geq 1 \right\},
$$
and $\operatorname{Irr}(S)$ is a \bfk-basis of \bfk $\cal{T}_{\shuffle}(\cal{V}) / \operatorname{Id}(S)$.
\end{enumerate}}}
\mlabel{thm:cdl}
\end{theorem}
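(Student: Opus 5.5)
We would prove the equivalences along the cycle (I)$\Rightarrow$(II)$\Rightarrow$(II')$\Rightarrow$(III)$\Rightarrow$(I), following the Bokut--Shirshov scheme and~\cite{DK09}. Throughout we use two facts. First, $\leq$ is a well-order, so we may induct on leading monomials. Second, $\leq$ is compatible with $\star$-substitution: if $u<v$ then $q|_u<q|_v$, hence $\overline{q|_s}=q|_{\bar s}$ for monic $s$.

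\emph{(I)$\Rightarrow$(II).} This is the heart of the proof. Take $0\neq f\in\Id(S)$ and write $f=\sum_i c_i q_i|_{s_i}$. Among all such presentations, choose one minimizing first $w:=\max_i q_i|_{\bar s_i}$ and then the number of indices attaining $w$.
\begin{itemize}
\item If only one index attains $w$, then $\bar f=w=q_i|_{\bar s_i}$ and we are done.
\item Otherwise two indices give $q_1|_{\bar s_1}=q_2|_{\bar s_2}=w$. We show that $q_1|_{s_1}-q_2|_{s_2}\equiv 0 \bmod (S,w)$. Replacing $c_2q_2|_{s_2}$ by $c_2q_1|_{s_1}$ plus lower terms then lowers the count, which contradicts minimality.
\end{itemize}
To prove this congruence we look at how the two divisors $\bar s_1,\bar s_2$ sit inside the shuffle tree $w$. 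They are connected vertex subtrees, so exactly one of three things happens.
\begin{enumerate}
\item They are disjoint. Then there is a tree with two $\star$-slots, $p(\star,\star')$, with $w=p(\bar s_1,\bar s_2)$. Write
\[
q_1|_{s_1}-q_2|_{s_2}=p(s_1,\,\bar s_2-s_2)-p(\bar s_1-s_1,\,s_2).
\]
Each term on the right is a combination of $\star$-substitutions of elements of $S$ with leading monomials $<w$.
\item They overlap without inclusion. Then the union of the two vertex sets is $q|_{\bar s_1\vee\bar s_2}$ for an intersection ambiguity, and $q_1|_{s_1}-q_2|_{s_2}=q|_{(s_1,s_2)_{w'}}$ with $w'=\bar s_1\vee \bar s_2$.
\item One contains the other. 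This gives an including composition in the same way.
\end{enumerate}
In cases 2 and 3, hypothesis (I) says $(s_1,s_2)_{w'}\equiv0\bmod(S,w')$. Substituting into $q$ and using compatibility with substitution gives the congruence modulo $(S,q|_{w'})=(S,w)$.

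\emph{(II)$\Rightarrow$(II')$\Rightarrow$(III).} For (II)$\Rightarrow$(II'), induct on $\bar f$. By (II), $\bar f=q_1|_{\bar s_1}$. Then $f-c\,q_1|_{s_1}$ still lies in $\Id(S)$ and has strictly smaller leading monomial, so we continue. For (II')$\Rightarrow$(III):
\begin{itemize}
\item $\bfk\operatorname{Irr}(S)+\Id(S)$ is everything, by the usual reduction algorithm. If the leading monomial of a polynomial is reducible, subtract the corresponding $q|_s$; this terminates because $\leq$ is a well-order.
\item The sum is direct. If $0\neq g\in\bfk\operatorname{Irr}(S)\cap\Id(S)$, then (II') forces $\bar g=q_1|_{\bar s_1}\notin\operatorname{Irr}(S)$, a contradiction.
\end{itemize}

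\emph{(III)$\Rightarrow$(I).} Let $(f,g)_w$ be any composition. It lies in $\Id(S)$, and $\overline{(f,g)_w}<w$ because the leading terms cancel. Run the reduction algorithm on $(f,g)_w$. Every monomial $q|_{\bar s}$ it uses is at most $\overline{(f,g)_w}<w$, and the algorithm ends with a remainder $r\in\bfk\operatorname{Irr}(S)$. Since $r=(f,g)_w-(\text{element of }\Id(S))\in\Id(S)$, directness in (III) forces $r=0$. Hence $(f,g)_w\equiv0\bmod(S,w)$.

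The step we expect to be the main obstacle is the case analysis in (I)$\Rightarrow$(II) inside \emph{shuffle} trees. There are two points to check. First, two occurrences of divisors in $w$ that overlap or are nested really produce an ambiguity $\bar s_1\vee\bar s_2$ of the form in Definition~\ref{defn:comp}: a shuffle tree (the local increasing condition must survive taking the union subtree) with the stated vertex-count bounds. Second, in the disjoint case the two-slot tree $p$ is a legitimate shuffle $\star\star$-tree to which the monomial-order compatibility still applies. We would handle both points by checking that restricting a shuffle tree to a connected vertex subtree and relabeling its leaves in increasing order preserves the shuffle condition, exactly as in~\cite[Section 3]{DK09}.
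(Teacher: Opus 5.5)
Your cycle of implications is the standard Bokut--Shirshov argument. The disjoint case, the nested case, and the implications (II)$\Rightarrow$(II')$\Rightarrow$(III)$\Rightarrow$(I) are fine.

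\textbf{Where it fails.} The problem is Case~2 of (I)$\Rightarrow$(II). There you assert that two overlapping, non-nested occurrences give an intersection ambiguity in the sense of Definition~\ref{defn:comp}. That definition only allows $w=\bar f\circ_i u=v\circ_j\bar g$. So one divisor contains the root of $w$, and everything else is a \emph{single} tree grafted on \emph{one} of its leaves. The other divisor is grafted onto a leaf of $v$, so it contains every vertex of $w$ above its own root. A general overlap of two connected vertex sets need not have this shape:
\begin{itemize}
\item the two divisors may share their root;
\item the lower one may continue above the upper one in another branch;
\item the upper one may leave the lower one through several leaves.
\end{itemize}
For such overlaps hypothesis (I) gives you nothing. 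Your planned check, that restricting to a subtree preserves the shuffle condition, does not touch this issue.

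\textbf{The gap cannot be closed.} With that definition, (I)$\Rightarrow$(II) is false. Take $\mathcal{V}=\mathcal{V}(2)=\{m,n\}$ and any monomial order with $m(n(x_1,x_2),x_3)>n(n(x_1,x_2),x_3)$, e.g.\ the leaf-lexicographic-path order with $m>n$. Let $S=\{f,g\}$ with
\[
f=m(n(x_1,x_2),x_3)-n(n(x_1,x_2),x_3),\qquad g=m(x_1,n(x_2,x_3)).
\]
Each of $\bar f,\bar g$ has exactly one $m$-vertex, namely its root.
\begin{itemize}
\item \emph{No intersection compositions.} Take any $w=\bar f\circ_i u=v\circ_j\bar g$, for any ordered pair from $S$, satisfying the vertex bounds. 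The two occurrences share a vertex, and the lower one contains the root of $w$. Hence the root of the upper divisor lies in the lower divisor. Being decorated by $m$, it is the root of $w$. This forces $w$ to equal the upper divisor, contradicting $|\operatorname{V}(w)|>\max\{|\operatorname{V}(\bar f)|,|\operatorname{V}(\bar g)|\}$.
\item \emph{Only trivial including compositions.} They are $f-f$ and $g-g$.
\end{itemize}
So $S$ is a Gr\"obner--Shirshov basis in the paper's sense. Now take $q_1=\star_3(x_1,x_2,n(x_3,x_4))$ and $q_2=\star_3(n(x_1,x_2),x_3,x_4)$. Then $q_1|_{\bar f}=q_2|_{\bar g}=m(n(x_1,x_2),n(x_3,x_4))$, and
\[
q_1|_f-q_2|_g=-n(n(x_1,x_2),n(x_3,x_4))\in\mathrm{Id}(S).
\]
This monomial has no $m$-vertex, so it is not of the form $q|_{\bar s}$. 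Hence (II) and (III) fail.

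\textbf{What your argument needs.} Use the Dotsenko--Khoroshkin notion, which the paper cites but does not reproduce. There is a composition $(f,g)_w=r_1|_f-r_2|_g$ for every $w=r_1|_{\bar f}=r_2|_{\bar g}$ in which the two occurrences overlap, are not nested, and together cover all vertices of $w$. With that definition your Case~2 is immediate, and your proof is correct.

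For the paper's later applications the narrow definition happens to be harmless. All leading monomials there are left combs $a(b(x_1,x_2),x_3)$. Two distinct overlapping occurrences of such monomials always form a three-vertex left comb of the required bottom/top shape.
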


\begin{remark}
The set $S$ forms a Gr\"{o}bner-Shirshov basis in $\bfk\cal{T}_{\shuffle}(\cal{V})$ if and only if the associated rewriting system~\cite{BD}
$$\Pi_S:=\{q|_{\bar{s}} \to q|_{\bar{s}-s}~|~s\in S(n), q\in\cal{T}_{\shuffle}(\cal{V})^{\star_n} \}$$
is convergent. In practice, the method of rewriting systems is commonly used to prove that the set $S$ is a Gr\"{o}bner-Shirshov basis in $\bfk\cal{T}_{\shuffle}(\cal{V})$~\cite{BD,Dot}.
Specifically, it suffices to show that the rewriting system $\Pi_S$ is confluent, given that  $\Pi_S$ is already terminating. This termination property follows directly from the monomial order employed in the construction, which is a well-order. Throughout the remainder of the paper, we work with the monomial order introduced in Definition~\mref{defn:lexpath}.
\mlabel{lem:rs}
\end{remark}

\subsection{Disuccessors of binary operads}
In this subsection, we review the notion of the disuccessor  for both nonsymmetric and symmetric operads. This concept plays a central role in the operadic approach to splitting algebraic operations and generalizing classical algebraic structures through operadic transformations.

\begin{defn}~\cite{BBG13}
Let $\gensp$ be a set. Denote
\begin{equation}
\widetilde{\gensp} := \gensp \times \{ \prec ,  \succ\}.
\mlabel{eq:tsp}
\end{equation}
 For each $\gop\in \cal{V}$, denote $\prec_\gop:= (\gop , \prec)\,\text{ and }\, \succ_\omega:=(\gop , \succ).$
\end{defn}

The formal definition of the disuccessor operation $\su_x(t)$ is provided in~\cite[Definition~2.3]{BBG13}. However, in many cases, it is more convenient to work with the following equivalent, yet more accessible, formulation.

\begin{prop}~\cite[Proposition 2.4]{BBG13}{\rm{
Let $\gensp$ be a set, $ t\in \calt(\gensp)$ and $x\in\lin( t)$. The \suc $\su_x( t)$ of $ t$ is obtained by redecorating each vertex of $ t$ according to the following rules:
\begin{enumerate}
\item We replace the decoration $\gop$ of each vertex on the path from the root to the leave $x$ of $ t$ by
\begin{enumerate}
\item[(i)] $\prec_\gop$ if the path turns left at this vertex.
\item[(ii)] $\succ_\gop$ if the path turns right at this vertex.
\end{enumerate}
\item We replace the decoration $\gop$ of each vertex not on the path from the root to the leave $x$ of $ t$ by $\ast_\gop:=\prec_\gop +\succ_\gop$.
\end{enumerate}}}
\mlabel{succpath}
\end{prop}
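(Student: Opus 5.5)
The plan is to argue by structural induction on the decorated tree $t$, comparing the recursive definition of the disuccessor in \cite[Definition~2.3]{BBG13} with the path-redecoration rule stated above. For the binary operads considered in this paper, I recall that definition in the following form; this recollection is the main assumption of the plan. It sets $\su_x(|)=|$. For $t=t_l\vee_\gop t_r$, meaning a root decorated by $\gop$ with left subtree $t_l$ and right subtree $t_r$, it sets
\[
\su_x(t)=\su_x(t_l)\vee_{\prec_\gop}\ast(t_r)\quad\text{if } x\in\lin(t_l),\qquad
\su_x(t)=\ast(t_l)\vee_{\succ_\gop}\su_x(t_r)\quad\text{if } x\in\lin(t_r).
\]
Here $\ast(s)$ denotes the tree polynomial obtained from $s$ by replacing every decoration $\gop$ with $\ast_\gop=\prec_\gop+\succ_\gop$ and expanding multilinearly. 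For vertices of arity greater than two the same recursion applies, with $\prec_\gop$ or $\succ_\gop$ at the root according to which input branch contains $x$. If the definition is instead phrased through $\sum_{y}\su_y(s)$ on the side subtrees, I would first prove the auxiliary identity $\sum_{y\in\lin(s)}\su_y(s)=\ast(s)$, by the same induction.

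With this in place, the induction is straightforward. In the base case $t=|$ the tree has no vertices, and both descriptions return $|$. For the inductive step take $t=t_l\vee_\gop t_r$ and suppose $x\in\lin(t_l)$. The path from the root to $x$ turns left at the root, so rule (i) decorates the root by $\prec_\gop$. That path then continues as the path from the root of $t_l$ to $x$, so by the induction hypothesis $\su_x(t_l)$ is exactly the path redecoration of $t_l$ with respect to $x$. No vertex of $t_r$ lies on the path, so rule (2) gives $\ast(t_r)$ there. Grafting these pieces reproduces the recursive formula. The case $x\in\lin(t_r)$ is symmetric, with $\succ_\gop$ at the root. Since the redecoration is defined vertex by vertex and grafting is multilinear, the two linear combinations of trees coincide.

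The main obstacle is bookkeeping rather than mathematics. First, I must make sure the recursive definition in \cite{BBG13} really uses $\ast$, rather than a sum of disuccessors, on the subtrees not containing $x$; this is why the auxiliary identity is needed if the definition is phrased differently. Second, I must check that the leaf labels and the planar structure are carried along unchanged, so that the "left/right turn" in the path rule matches the choice of branch containing $x$ in the recursion. Once the definition is fixed, both checks follow by induction on the number of vertices.
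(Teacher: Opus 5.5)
Your induction on the number of vertices is correct and is essentially the standard argument. It matches the recursive definition of \cite[Definition 2.3]{BBG13} branch by branch, with the identity $\sum_{y\in\lin(s)}\su_y(s)=\ast(s)$ as a fallback if the side subtrees are handled by sums. The paper gives no proof of its own and simply imports the statement from \cite[Proposition 2.4]{BBG13}, where it comes from this same unfolding of the recursion.

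You should drop the aside about vertices of arity greater than two. The disuccessor and the left/right-turn rule are defined only for binary vertices, so that case never arises.
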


Let us expose an example heuristically.

\begin{exam} We have
\begin{align*}
\su_2\left(\threeoY {\nu}{\mu}{\omega}1234\right)=&\threeoY {\prec_{\nu}}{\succ_{\mu}}{\ast_{\omega}}1234=
\threeoY {\prec_{\nu}}{\succ_{\mu}}{\prec_{\omega}}1234+\threeoY {\prec_{\nu}}{\succ_{\mu}}{\succ_{\omega}}1234.
\end{align*}
\end{exam}

We now recall the construction of the disuccessor for symmetric operads.
Let $\opd=\bfk\mathcal{T}(\gensp)/ (\relsp)$  be a binary symmetric (
{parallel to the case of symmetric operads, the following definition can be performed in the nonsymmetric framework. }) operad,
 \[ \gensp  =\Big\{\oneo \mu12,\,\oneo \nu12\Big\},\quad\text{ where }\, \mu^{(12)}=\nu, ~~~~(12)\in \BS_{2}\]
such that $\relsp$ is spanned, as an $\BS$-module, by elements of the form
\begin{equation*}
\relsp:=\left\{r_s=\sum_i c_{s,i} t_{s,i}\in\bfk\mathcal{T}(\gensp)(n) \;\Big| \;\ c_{s,i}\in\bfk , \ 1\leq s\leq k\right\}.
\end{equation*}

\begin{defn}~\cite[Definition 2.19]{BBG13}
The {\bf \suc} of $\opd$ is defined to be the binary operad
$$\su(\opd):=\bfk\mathcal{T}( \widetilde{\gensp}  )/ (\su(\relsp)),$$
where $\widetilde{\gensp}$ is given in~(\ref{eq:tsp}) and
\begin{equation*}
\su(R):= \left\lbrace  \su_x(r_{s}):=\sum_ic_{s,i}\su_{x}(t_{s,i})\;\Big| \; x\in \lin(t_{s,i}), \; 1\leq s\leq
k  \right\rbrace . 
\end{equation*}
Here the action of $S_2$ on $\widetilde{\gensp}$ is given by: $\prec_\mu^{(12)}=\succ_\nu$ and $\succ_\mu^{(12)}=\prec_\nu$.
\mlabel{defn:suc}
\end{defn}

For $N\geq 2$, the {\bf $N$-th \suc} of $\mathcal{P}$, denoted by $\su^{N}(\np)$, is defined recursively as the \suc of
the {\bf $(N-1)$-th \suc} of the operad, where the {\bf first \suc} of the operad is just the \suc of the operad.

\section{Disuccessors of the operads $\as, \lie$ and $\prelie$, and their Gr\"obner-Shirshov-bases}\label{sec:bsoperad}
In this section, we establish that the disuccessors of the associative operad, the Lie operad, and the pre-Lie operad each admit a Gr\"obner-Shirshov basis. These results demonstrate the compatibility of the disuccessor construction with the theory of Gr\"obner-Shirshov bases and highlight its structural preservation across fundamental operadic types.

\subsection{Disuccessor of the operad $\as$ and Gr\"obner-Shirshov bases}
\label{sec:As operad}
We begin by using the associative operad $\as$ as a representative example to illustrate the disuccessor construction for binary operads.

Let $\as $  be the associative (nonsymmetric) operad with the operation $\cal{V}_\as=\{\bcdot\}$  and the relation
\[R_\as =\twool \bcdot\bcdot312-\twoor \bcdot1\bcdot23.\]
Then $\su(\as )$ is the operad $\dend$ of dendriform algebras~\cite{BBG13}. 
For elements of $\widetilde{\cal{V}_\as}$, we abbreviate $\prec_\bcdot$ as $\prec$,  $\succ_\bcdot$ as $\succ$.

\begin{prop}~\label{prop:as}
Equipped with the leaf-lexicographic-path order on $\fnsop(\widetilde{\cal{V}_\as})$ such that $\succ$ is greater than $\prec$ in $ \widetilde{\cal{V}_\as}$,  we obtain
\begin{equation*}
\lbar{\su_{i}(R_\as)}=\lbar{\su_{i}(\lbar{R_\as })}, \quad\text{ for }\, i=1,2,3. \mlabel{eq:u1}
\end{equation*}
Moreover, for any ambiguity $\lbar{\su_{i_1}(R_\as)} \circa \lbar{\su_{i_2}(R_\as)}$ arising from the relations in $\su(R_\as)$, as defined in Definition~\mref{defn:comp}, with $i_1, i_2 \in \{1,2,3\}$, there exists $i \in \{1,2,3,4\}$ such that
$$\lbar{\mathop{\su_{i_1}}(R_\as )}\circa\lbar{\mathop{\su_{i_2}}(R_\as )}=\lbar{\mathop{\su_i}(\lbar{R_\as} \circa \lbar{R_\as} )}.$$
\end{prop}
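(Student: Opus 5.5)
The proof is a direct computation. In the nonsymmetric setting every tree monomial has its leaves labelled $1,\dots,n$ from left to right, so part (a) of the leaf-lexicographic-path order (Definition~\ref{defn:lexpath}) never separates two monomials. All comparisons therefore reduce to comparing path sequences word by word in degree-lexicographic order, with $\succ>\prec$.

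\emph{Leading monomial of $R_\as$.} The left comb $\twool \bcdot\bcdot312$ has path sequence $(\bcdot\bcdot,\bcdot\bcdot,\bcdot)$. The right comb has $(\bcdot,\bcdot\bcdot,\bcdot\bcdot)$. The first words already differ in length, so $\lbar{R_\as}$ is the left comb.

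\emph{First claim, $\lbar{\su_i(R_\as)}=\lbar{\su_i(\lbar{R_\as})}$.} I would use Proposition~\ref{succpath} to write out the three relations.
\begin{itemize}
\item $\su_1(R_\as)$: left comb $\prec,\prec$ (inner, root) minus the right combs with root $\prec$ and inner vertex $\ast=\prec+\succ$.
\item $\su_2(R_\as)$: left comb with inner $\succ$, root $\prec$, minus the right comb with root $\succ$, inner $\prec$.
\item $\su_3(R_\as)$: left combs with root $\succ$ and inner vertex $\ast$, minus the right comb $\succ,\succ$.
\end{itemize}
For $i=1,2$, every left comb beats every right comb because the first path word has length $2$ against length $1$. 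So the leading term is the unique left comb, which is $\su_i(\lbar{R_\as})$ itself.

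For $i=3$, both candidates are left combs. Their path sequences are $(\succ\prec,\succ\prec,\succ)$ and $(\succ\succ,\succ\succ,\succ)$, so the larger one has inner $\succ$ and root $\succ$. This is also the leading term of $\su_3(\lbar{R_\as})=\ast$-inner/$\succ$-root. Hence the leading monomials of $\su(R_\as)$ are exactly the left combs whose (inner, root) decoration lies in
\[
\{(\prec,\prec),\,(\succ,\prec),\,(\succ,\succ)\}.
\]

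\emph{Second claim, the ambiguities.} First I determine all ambiguities between two such leading monomials.
\begin{itemize}
\item There are no including compositions: both monomials have two vertices, and distinct monomials of the same size cannot contain one another.
\item An intersection composition needs a tree $w$ with exactly $3$ vertices containing two left-comb subtrees that share exactly one vertex. In a planar binary tree the only option is the $4$-leaf left comb $((12)3)4$, with the two subtrees given by the (innermost, middle) and (middle, root) vertex pairs.
\end{itemize}
So an ambiguity is a left comb with decorations $(c_1,c_2,c_3)$, read from innermost to root, such that both $(c_1,c_2)$ and $(c_2,c_3)$ lie in the allowed set. Listing these gives exactly four chains:
\[
\prec\prec\prec,\qquad \succ\prec\prec,\qquad \succ\succ\prec,\qquad \succ\succ\succ .
\]

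On the other side, $\lbar{R_\as}\circa\lbar{R_\as}=((12)3)4$, and I compute $\su_i$ of it for $i=1,\dots,4$ via Proposition~\ref{succpath}:
\begin{itemize}
\item $\su_1$: $\prec,\prec,\prec$;
\item $\su_2$: $\succ,\prec,\prec$;
\item $\su_3$: $\ast,\succ,\prec$;
\item $\su_4$: $\ast,\ast,\succ$.
\end{itemize}
It remains to find the leading term when a vertex carries $\ast$. All terms have the same shape, so I compare path words position by position. Replacing $\prec$ by $\succ$ at a vertex enlarges the words of every leaf above that vertex and changes nothing earlier, so the leading choice is always $\succ$. This gives $\succ\succ\prec$ for $\su_3$ and $\succ\succ\succ$ for $\su_4$. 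The four leading monomials of the $\su_i(\lbar{R_\as}\circa\lbar{R_\as})$ therefore match the four ambiguities one for one, which proves the second claim.

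\emph{Expected obstacle.} The only delicate step is checking that the list of ambiguities is complete. This requires the fact that, for nonsymmetric binary trees, left-comb overlaps can only occur inside the $4$-leaf left comb. Everything else is a finite comparison.
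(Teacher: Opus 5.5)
Your proposal is correct and follows the paper's proof. The paper likewise writes out $\su_1,\su_2,\su_3$ of $R_\as$ via Proposition~\ref{succpath}, reads off the leading left combs, lists the same four ambiguities on the $4$-leaf left comb, and matches them with the leading monomials of $\su_i(\lbar{R_\as}\circa\lbar{R_\as})$ for $i=1,\dots,4$. You additionally justify two points the paper only asserts: that no other overlaps or nontrivial inclusions occur, and that choosing $\succ$ at every $\ast$-vertex gives the leading term. This makes your version slightly more complete.
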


\begin{proof}
By the definition of disuccessor operation, we have
\begin{align*}
  \su_1(R_\as )=&\llbar{\twool {\prec}{\prec}312}-\twoor {\prec}1{\ast}23,\quad
  \su_2(R_\as )=\llbar{\twool {\prec}{\succ}312}-\twoor {\succ}1{\prec}23,\\
  \su_3(R_\as )=&\twool {\succ}{\ast}312-\twoor {\succ}1{\succ}23=\llbar{\twool {\succ}{\succ}312}+\twool {\succ}{\prec}312-\twoor {\succ}1{\succ}23.
\end{align*}
The ambiguity of ${R_\as }$ is
 \[ \lbar{R_\as} \circa\lbar{R_\as} =\threeoll \bcdot\bcdot4\bcdot312,\]
where the left hand side is defined in Definition~\mref{defn:comp}. Note that $\succ$ is bigger than $\prec$ by the hypothesis. The ambiguities of
  \[\su(R_\as )=\Big\{\su_1(R_\as ),\su_2(R_\as ),\su_3(R_\as )\Big\}\]
  are as follows:
\begin{equation}
\begin{aligned}
\lbar{\su_1(R_\as )}\circa\lbar{\su_1(R_\as )}=&\threeoll {\prec}{\prec}4{\prec}312,\quad
\lbar{\su_1(R_\as )}\circa\lbar{\su_2(R_\as )}=\threeoll {\prec}{\prec}4{\succ}312,\\
\lbar{\su_2(R_\as )}\circa\lbar{\su_3(R_\as )}=&\threeoll {\prec}{\succ}4{\succ}312,\quad
\lbar{\su_3(R_\as )}\circa\lbar{\su_3(R_\as )}=\threeoll {\succ}{\succ}4{\succ}312.
\end{aligned}
\label{eq:ambasso}
\end{equation}
Then
\begin{align*}
&\lbar{\su_1(\lbar{R_\as} \circa \lbar{R_\as} )}
=\lbar{\su_1(R_\as )}\circa\lbar{\su_1(R_\as )},\\
&\lbar{\su_2(\lbar{R_\as} \circa \lbar{R_\as} )}
=\lbar{\su_1(R_\as )}\circa\lbar{\su_2(R_\as )},\\
&\lbar{\su_3(\lbar{R_\as} \circa \lbar{R_\as} )}
=\lbar{\su_2(R_\as )}\circa\lbar{\su_3(R_\as )},\\
&\lbar{\su_4(\lbar{R_\as} \circa \lbar{R_\as} )}
=\lbar{\su_3(R_\as )}\circa\lbar{\su_3(R_\as )},
\end{align*}
as required.
\end{proof}

The set of relations of $\dend$ operad is a Gr\"obner-Shirshov basis as stated in~\cite{Mada14}. Here we present a concise proof for this assertion.

\begin{theorem}\mlabel{thm:gsbdend}
Equipped with the leaf-lexicographic-path order on $\fnsop(\widetilde{\cal{V}_\as})$ such that $\succ$ is greater than $\prec$ in $\widetilde{\cal{V}_\as}$,
the set of relations $\su(R_\as) = R_{\dend}$ defining the operad $\dend$ forms a Gr\"obner-Shirshov basis in the free nonsymmetric operad
$\bfk\fnsop(\widetilde{\cal{V}_\as})$.
\end{theorem}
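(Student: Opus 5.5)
The plan is to verify directly that every composition among the three relations $\su_1(R_\as),\su_2(R_\as),\su_3(R_\as)$ is trivial, using Proposition~\ref{prop:as} to organise the ambiguities. Equivalently, by Remark~\ref{lem:rs}, I will show that the rewriting system $\Pi_{\su(R_\as)}$ is confluent; termination is automatic because the leaf-lexicographic-path order is a well-order. This route is justified by Theorem~\ref{thm:cdl}.

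\textbf{Step 1: the ambiguities.} By Proposition~\ref{prop:as}, the leading monomials are the three left combs $\twool{\prec}{\prec}312$, $\twool{\prec}{\succ}312$ and $\twool{\succ}{\succ}312$. In the nonsymmetric setting each is a binary tree with two vertices, so there are no including compositions between distinct relations. Intersection compositions arise only when the lower vertex of one leading monomial is the upper vertex of another, which gives left combs of arity $4$. Since every leading monomial has its lower vertex decorated by $\prec$, or equals $\succ\succ$, only the overlaps compatible with these shapes survive. These are exactly the four ambiguities in~\eqref{eq:ambasso}, which Proposition~\ref{prop:as} identifies with $\lbar{\su_i(\lbar{R_\as}\circa\lbar{R_\as})}$ for $i=1,2,3,4$.

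\textbf{Step 2: the key structural observation.} For each such ambiguity $w$, the composition $(\su_{i_1}(R_\as),\su_{i_2}(R_\as))_w$ is obtained by applying $\su_i$ to the associative composition $(R_\as,R_\as)_{\lbar{R_\as}\circa\lbar{R_\as}}$. This holds because $\su_x$ is compatible with partial composition: the disuccessor of $f\circ_j u$ at a leaf $x$ is the composite of the disuccessors along the path to $x$, with $\ast$ placed elsewhere, and $\su_x$ is linear. The associative composition is the pentagon identity: it reduces to $0$ through the five standard rewriting steps, each of the form $q|_{R_\as}$ with $q|_{\lbar{R_\as}}<w$.

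Applying $\su_i$ to this reduction expresses the dendriform composition as a combination of terms $\su_i(q|_{R_\as})$. Each such term expands into a sum of $q'|_{\su_j(R_\as)}$, where the $\ast$ vertices are split into $\prec$ and $\succ$.

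\textbf{Step 3: the main obstacle.} It remains to check that every monomial $q'|_{\lbar{\su_j(R_\as)}}$ arising in Step 2 is strictly smaller than $w$. Two facts drive this. The leaf labels of all terms coincide, since the order is nonsymmetric. The path words compare by degree-lexicographic order in which $\succ>\prec$.

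My approach is to prove a monotonicity lemma: if $t<t'$ in $\fnsop(\cal V_\as)$ with the same leaves, then every monomial of $\su_x(t)$ is smaller than the leading monomial of $\su_x(t')$. Failing a clean proof of that lemma, I would instead verify the four cases $i=1,\dots,4$ by hand, writing out the at most roughly $16$ monomials of each $\su_i$-image and comparing their path sequences with $w$.

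This is the step most likely to contain subtleties. It is also where the convention $\succ>\prec$ is essential, since it is what guarantees $\lbar{\su_i(R_\as)}=\lbar{\su_i(\lbar{R_\as})}$. Once triviality modulo $(\su(R_\as),w)$ holds for all four ambiguities, Theorem~\ref{thm:cdl} gives the result.
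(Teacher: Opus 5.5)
Your overall route is the paper's: it also checks confluence of exactly the four ambiguities in~\eqref{eq:ambasso}, and the two diamonds in Figure~\ref{fig:didend} are precisely the $\su_1$- and $\su_2$-images of the associative diamond. However, Step~2 is false as stated for $i=3,4$; it is exact only for $i=1,2$. The composition is built from the specific monomials $u=x_1\succ x_2$ and $v=y_1\prec y_2$ (resp.\ $v=y_1\succ y_2$). By contrast, $\su_i$ decorates every vertex off the path to leaf $i$ by $\ast$. For instance, $\su_3(w_0)=((x_1\ast x_2)\succ x_3)\prec x_4$ with $w_0=((x_1x_2)x_3)x_4$, not $((x_1\succ x_2)\succ x_3)\prec x_4$. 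A direct computation gives
\[
(\su_2(R_\as),\su_3(R_\as))_w=\su_3\big((R_\as,R_\as)_{w_0}\big)-\su_2(R_\as)\circ_1(x_1\prec x_2).
\]
For $i=4$ the discrepancy is $-\su_3(R_\as)\circ_1(x_1\prec x_2)+(y_1\succ y_2)\circ_1\big(\su_1(R_\as)+\su_2(R_\as)\big)$. These correction terms have leading monomials $((x_1\prec x_2)\succ x_3)\prec x_4$, resp.\ $((x_1\prec x_2)\succ x_3)\succ x_4$, $((x_1\prec x_2)\prec x_3)\succ x_4$, $((x_1\succ x_2)\prec x_3)\succ x_4$. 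Their first path words are lexicographically smaller than that of $w$. So the argument survives, but only as a congruence modulo $(\su(R_\as),w)$, and your Step~3 must also account for these extra terms, which you currently assert do not exist.

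Second, the general monotonicity lemma you propose is false. Take $t=(x_1x_2)(x_3x_4)<t'=(x_1(x_2x_3))x_4$. Every monomial of $\su_3(t)=(x_1\ast x_2)\succ(x_3\prec x_4)$ has first path word $\succ\prec$ or $\succ\succ$. Both beat the first path word $\prec\succ$ of $\su_3(t')=(x_1\succ(x_2\succ x_3))\prec x_4$. Fortunately you only need comparisons against $w=\lbar{\su_i(w_0)}$, and these are immediate. The monomials $(x_1(x_2x_3))x_4$, $x_1((x_2x_3)x_4)$ and $(x_1x_2)(x_3x_4)$ in the associative reduction have leaf~$1$ at depth at most $2$. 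Meanwhile $w$ has first path word of length $3$, so every monomial of their $\su_i$-images lies below $w$ already at the first word. Thus Step~3 is routine; the real care is needed in Step~2.
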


\begin{proof}
Since $\su(R_\as) = \dend$~\cite{BBG13}, we have $\su(R_\as)=R_{\dend}.$
According to Remark~\ref{lem:rs}, we employ the method of  rewriting systems to prove that $\su(R_\as)$ is a Gr\"obner-Shirshov basis.
There are four ambiguities  (equivalently, local forks) listed in~(\mref{eq:ambasso}).
The confluence of the rewriting system is verified through direct computation.
Two such computations are detailed in Figure~\ref{fig:didend}:
\begin{figure}[H]
\centering
\begin{tikzpicture}
\node[draw=none,fill=none] at (0,0) (ass1) {\threeoll {\prec}{\prec}4{\prec}312};
\node[draw=none,fill=none] at (-2.5,-0.45) (ass2) { \threeolr {\prec}{\prec}41{\ast}23};
\node[draw=none,fill=none] at (-2.5,-3.05) (ass3) {\quad\threeorl {\prec}1{\ast}{\ast}423};
\node[draw=none,fill=none] at (2.5,-1.5) (ass4) {\threeoY {\prec}{\prec}{\ast}1234};
\node[draw=none,fill=none] at (0,-4) (ass5) {\threeorr {\prec}1{\ast}2{\ast}34};
\draw [->] (ass1) -- (ass2);
\draw [->] (ass2) -- (ass3);
\draw [->] (ass3) -- (ass5);
\draw [->] (ass1) -- (ass4);
\draw [->] (ass4) -- (ass5);
\end{tikzpicture}
\quad
\begin{tikzpicture}
\node[draw=none,fill=none] at (0,0) (ass1) {\threeoll {\prec}{\prec}4{\succ}312};
\node[draw=none,fill=none] at (-2.5,-0.45) (ass2) { \threeolr {\prec}{\succ}41{\prec}23};
\node[draw=none,fill=none] at (-2.5,-3.05) (ass3) {\quad\threeorl {\succ}1{\prec}{\prec}423};
\node[draw=none,fill=none] at (2.5,-1.5) (ass4) {\threeoY {\prec}{\succ}{\ast}1234};
\node[draw=none,fill=none] at (0,-4) (ass5) {\threeorr {\succ}1{\prec}2{\ast}34};
\draw [->] (ass1) -- (ass2);
\draw [->] (ass2) -- (ass3);
\draw [->] (ass3) -- (ass5);
\draw [->] (ass1) -- (ass4);
\draw [->] (ass4) -- (ass5);
\end{tikzpicture}
\caption{Two diamonds for the operad $\su(\as )=\dend$}
\label{fig:didend}
\end{figure}
while the others follow analogously.
\end{proof}

\subsection{Disuccessor of the operad $\lie$ and Gr\"obner-Shirshov bases}
\label{sec:lie operad}
In this subsection, we turn our attention to the Lie operad $\lie$ as a case study to illustrate the construction of successors for binary operads. By applying this framework, we derive a Gr\"obner-Shirshov basis for the pre-Lie operad $\prelie$, thereby establishing a concrete link between the operadic disuccessor process and the combinatorial structure of pre-Lie algebras.

\begin{defn}
{\bf A (right) pre-Lie algebra} is defined by one bilinear operation $\bullet$ and one relation:
$$ (x\bullet y)\bullet z-x\bullet(y\bullet z)=(x\bullet z)\bullet y-x\bullet(z\bullet y) \ .$$
The associated operad is denoted by $\prelie$.
\end{defn}

It is known that $\su(\lie)=\prelie$~\cite{BBG13}, where $\lie $  is the Lie operad with the operation $\cal{V}_\lie=\{\mu=-\mu^{(12)}\}$  and relation $$R_\lie=\twool {\mu}{\mu}312-\twoor {\mu}1{\mu}23-\twool {\mu}{\mu}213.$$
Then $\widetilde{\cal{V}_\lie}=\{\rp_\mu, \lp_\mu\}$. Moreover, by $\mu=-\mu^{(12)}$, we have $\prec_\mu=-\succ_\mu^{(12)}$. 
\begin{prop}~\label{prop:lie}
Equipped with the leaf-lexicographic-path order on $\cal{T}_{\shuffle}(\widetilde{\cal{V}_\lie})$ such that $\succ_\mu$ is greater than $\prec_\mu$ in $\widetilde{\cal{V}_\lie}$,
then
\begin{equation*}
\lbar{\su_{i}(R_\lie)}=\lbar{\su_{i}(\lbar{R_\lie })}, \quad\text{ for }\, i=1,2,3. \mlabel{eq:lie}
\end{equation*}
Moreover, for any ambiguity $\lbar{\mathop{\su_{i_1}}(R_\lie )}\circa\lbar{\mathop{\su_{i_2}}(R_\lie )}$ from the relation $\su{(R_\lie)}$ with some $i_1, i_2\in \{1,2,3\}$, there exists $i\in \{1,2,3,4\}$ such that
$$\lbar{\mathop{\su_{i_1}}(R_\lie )}\circa\lbar{\mathop{\su_{i_2}}(R_\lie )}=\lbar{\mathop{\su_i}(\lbar{R_\lie} \circa \lbar{R_\lie} )}.$$
\end{prop}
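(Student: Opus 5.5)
The plan is to follow the proof of Proposition~\ref{prop:as}, working in the shuffle operad $\cal{T}_{\shuffle}(\widetilde{\cal{V}_\lie})$ rather than the nonsymmetric one. Since $\prec_\mu=-\succ_\mu^{(12)}$, the $\BS_2$-module spanned by $\widetilde{\cal{V}_\lie}$ is two-dimensional. So $\{\prec_\mu,\succ_\mu\}$ is a shuffle basis of generators, and every disuccessor can be rewritten as a combination of shuffle tree monomials decorated only by $\prec_\mu,\succ_\mu$, with a sign whenever a transposition is absorbed.

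First I determine $\lbar{R_\lie}$ under the leaf-lexicographic-path order. Two of its monomials carry leaf sequence $123$ and one carries $132$. By the reverse degree-lexicographic comparison, $123>132$. Between the two $123$-monomials, the left comb has path sequence $(\mu\mu,\mu\mu,\mu)$ and the right comb has $(\mu,\mu\mu,\mu\mu)$, so the left comb wins, exactly as in Example~\ref{ex:lexpath}. Thus $\lbar{R_\lie}=\twool{\mu}{\mu}312$.

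Next I compute $\su_x(R_\lie)$ for $x=1,2,3$ with Proposition~\ref{succpath}:
\begin{itemize}
\item $\su_1(R_\lie)=\twool{\prec_\mu}{\prec_\mu}312-\twoor{\prec_\mu}1{\ast_\mu}23-\twool{\prec_\mu}{\prec_\mu}213$;
\item $\su_2(R_\lie)=\twool{\prec_\mu}{\succ_\mu}312-\twoor{\succ_\mu}1{\prec_\mu}23-\twool{\succ_\mu}{\ast_\mu}213$;
\item $\su_3(R_\lie)=\twool{\succ_\mu}{\ast_\mu}312-\twoor{\succ_\mu}1{\succ_\mu}23-\twool{\prec_\mu}{\succ_\mu}213$.
\end{itemize}
In each case the terms with leaf sequence $132$ are smaller than those with $123$. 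Among the $123$-terms, every left-comb term beats every right-comb term, because its first path word has length $2$. In $\su_3$, the tie between the two left combs is broken by $\succ_\mu>\prec_\mu$ in the degree-lexicographic comparison of path words. Hence the leading monomials are the left combs decorated by $(\prec_\mu,\prec_\mu)$, $(\prec_\mu,\succ_\mu)$ and $(\succ_\mu,\succ_\mu)$, where the first entry is the root and the second the inner vertex. These are precisely $\lbar{\su_i(\lbar{R_\lie})}$, which proves the first claim.

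For the ambiguities, I first argue that the only overlap of $\lbar{R_\lie}$ with itself in the shuffle setting is the left comb $\threeoll{\mu}{\mu}4{\mu}312$. The leading monomial is a left comb labelled $1,2,3$, so the lower copy must sit at the left child of the upper one. The shuffle condition then forces the labels $1,2,3,4$ in that order. There are no including compositions, since all leading monomials have two vertices. For the disuccessor relations, an overlap requires the (root, middle) decoration pair of one leading monomial to be followed by the (middle, top) pair of another, with the middle decoration shared. The admissible pairs are $(\prec,\prec)$, $(\prec,\succ)$ and $(\succ,\succ)$, which chain in exactly four ways: $(\prec,\prec,\prec)$, $(\prec,\prec,\succ)$, $(\prec,\succ,\succ)$ and $(\succ,\succ,\succ)$.

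Finally I compute $\su_i$ of the $4$-comb for $i=1,\dots,4$ and take leading monomials, expanding each $\ast$ and keeping the $\succ$-summand. Since the comb is the whole tree, only the path to leaf $i$ is fixed and the remaining vertex is $\ast$, which contributes its $\succ$ part. This yields exactly the four chains, matching them with $i=1,2,3,4$.

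The main obstacle is the first step: carefully converting the disuccessors of the non-shuffle terms into shuffle form via $\prec_\mu=-\succ_\mu^{(12)}$. One must make sure that no term produced in that conversion acquires the leaf sequence $123$ with a larger path sequence than the claimed leading monomial, and that signs do not cancel the leading term. I would check each of the three relations term by term before doing the ambiguity analysis.
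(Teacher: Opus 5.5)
Your proposal is correct and follows essentially the same route as the paper. Both compute $\su_1(R_\lie),\su_2(R_\lie),\su_3(R_\lie)$ explicitly, read off the leading left combs under the leaf-lexicographic-path order, and match the four overlaps with the leading terms of $\su_i$ applied to the $4$-leaf left comb. Your chaining argument on decoration pairs adds a completeness check on the ambiguity list that the paper only asserts.

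The step you flag as the main obstacle is vacuous. All three monomials of $R_\lie$ are already shuffle tree monomials, and the disuccessor only redecorates vertices, so no $(12)$-conversion or sign ever arises.
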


\begin{proof}
For the first statement, notice that 
\begin{align*}
\su_1(R_\lie)=&\llbar{\twool {\prec_\mu}{\prec_\mu}312}-\twoor {\prec_\mu}1{\prec_\mu}23-\twoor {\prec_\mu}1{\succ_\mu}23-\twool {\prec_\mu}{\prec_\mu}213,\\
%
\su_2(R_\lie)=&\llbar{\twool {\prec_\mu}{\succ_\mu}312}-\twoor {\succ_\mu}1{\prec_\mu}23-\twool {\succ_\mu}{\succ_\mu}213-\twool {\succ_\mu}{\prec_\mu}213,\\
\su_3(R_\lie)=&\llbar{\twool {\succ_\mu}{\succ_\mu}312}+\twool {\succ_\mu}{\prec_\mu}312-\twoor {\succ_\mu}1{\succ_\mu}23-\twool {\prec_\mu}{\succ_\mu}213.\\
\end{align*}
Thus
\begin{align*}
\lbar{\su_{1}(R_\lie)}=& \llbar{\twool {\prec_\mu}{\prec_\mu}312}=\lbar{\su_{1}(\lbar{R_\lie })},~~~~\quad
\lbar{\su_{2}(R_\lie)}=\llbar{\twool {\prec_\mu}{\succ_\mu}312}=\lbar{\su_{2}(\lbar{R_\lie })},\\
\lbar{\su_{3}(R_\lie)}=& \llbar{\twool {\succ_\mu}{\succ_\mu}312} = \lbar{\su_{3}(\lbar{R_\lie })}.
\end{align*}

For the second statement, the ambiguity of $R_\lie$ is
\[\lbar{R_\lie}\circa\lbar{R_\lie}=\threeoll \mu\mu4\mu312.\]
By the hypothesis $\succ_\mu>\prec_\mu$, according to  leaf-lexicographic-path order, the ambiguities of $\su(R_\lie)$ are
\begin{equation}\label{eq:ovepll}
\begin{split}
\lbar{\su_1(R_\lie)}\circa \lbar{\su_1(R_\lie)}=\threeoll {\prec_\mu}{\prec_\mu}4{\prec_\mu}312,\quad \lbar{\su_1(R_\lie)}\circa\lbar{\su_2(R_\lie)}=\threeoll {\prec_\mu}{\prec_\mu}4{\succ_\mu}312,\\
\lbar{\su_2(R_\lie)}\circa\lbar{\su_3(R_\lie)}=\threeoll {\prec_\mu}{\succ_\mu}4{\succ_\mu}312,\quad \lbar{\su_3(R_\lie)}\circa\lbar{\su_3(R_\lie)}=\threeoll {\succ_\mu}{\succ_\mu}4{\succ_\mu}312.
\end{split}
\end{equation}
Thus
\begin{align*}
&\lbar{\su_1(\lbar{R_\lie} \circa \lbar{R_\lie} )}
=\lbar{\su_1(R_\lie )}\circa\lbar{\su_1(R_\lie )},\\
&\lbar{\su_2(\lbar{R_\lie} \circa \lbar{R_\lie} )}
=\lbar{\su_1(R_\lie )}\circa\lbar{\su_2(R_\lie )},\\
&\lbar{\su_3(\lbar{R_\lie} \circa \lbar{R_\lie} )}
=\lbar{\su_2(R_\lie )}\circa\lbar{\su_3(R_\lie )},\\
&\lbar{\su_4(\lbar{R_\lie} \circa \lbar{R_\lie} )}
=\lbar{\su_3(R_\lie )}\circa\lbar{\su_3(R_\lie )},
\end{align*}
as required.
\end{proof}

We are ready for the main result in this subsection.
The set of relations of $\prelie$ operad is a Gr\"obner-Shirshov basis as stated in~\cite{BD}. Here we present a concise proof for this assertion.

\begin{theorem}\label{thm:lipre}
Equipped with the leaf-lexicographic-path order on $\cal{T}_{\shuffle}(\widetilde{\cal{V}_\lie})$ such that $\succ_\mu$ is greater than $\prec_\mu$ in $\widetilde{\cal{V}_\lie}$, the set of relations $\su(R_\lie) = R_{\prelie}$ of $\prelie$ operad is a Gr\"obner-Shirshov basis in $\bfk\cal T_{\shuffle}(\widetilde{\cal{V}_\lie})$.
\end{theorem}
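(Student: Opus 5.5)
We follow the template of Theorem~\ref{thm:gsbdend}. First we identify $\su(R_\lie)$ with $R_{\prelie}$ using $\su(\lie)=\prelie$~\cite{BBG13}. Under $\prec_\mu=-\succ_\mu^{(12)}$, each of $\su_1(R_\lie),\su_2(R_\lie),\su_3(R_\lie)$ is, up to sign and the $\mathbb{S}_3$-action, one of the shuffle-tree rewritings of the right pre-Lie relation for $\bullet=\succ_\mu$ (equivalently $\prec_\mu$). So $\su(R_\lie)$ spans the $\mathbb{S}$-module of pre-Lie relations in arity $3$, and it consists of three monic shuffle polynomials. By Proposition~\ref{prop:lie}, their leading monomials are exactly the three trees $\lbar{\su_i(\lbar{R_\lie})}$ displayed there.

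By Remark~\ref{lem:rs}, it then suffices to prove that the rewriting system $\Pi_{\su(R_\lie)}$ is confluent; termination is automatic because the leaf-lexicographic-path order is a monomial well-order. We next enumerate the local forks. Each leading monomial is a left comb with leaf labels $1,2,3$, so two such monomials can overlap only by sharing one vertex in a left comb with four leaves and labels $1,2,3,4$. Including compositions cannot occur, since all relations have the same arity. By the second part of Proposition~\ref{prop:lie}, the intersection ambiguities are exactly the four trees in~(\ref{eq:ovepll}). Each of these is $\lbar{\su_i(\lbar{R_\lie}\circa\lbar{R_\lie})}$ for some $i\in\{1,2,3,4\}$, namely the disuccessor at leaf $i$ of the single Lie ambiguity. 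This correspondence reduces the number of forks to check from what one would expect a priori to four.

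For each of the four ambiguities $w$, we compute the two reductions, one using the relation on the lower pair of vertices and one using the relation on the upper pair. We then rewrite both sides further to normal forms in $\Irr(\su(R_\lie))$ and check that they coincide, i.e.\ that $(f,g)_w\equiv 0 \bmod (\su(R_\lie),w)$. We will draw these as diamonds, as in Figure~\ref{fig:didend}. To organise the check, we use that the Lie ambiguity $\lbar{R_\lie}\circa\lbar{R_\lie}$ resolves, which is the Jacobi identity in arity $4$ and confluent by the classical Gröbner basis for $\lie$. We apply $\su_i$ termwise to that resolution: since $\su_x$ is compatible with grafting along the path to $x$, the image of a reduction step of $R_\lie$ is a combination of reduction steps of $\su(R_\lie)$. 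This gives a candidate common reduct for each fork.

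The main obstacle is that the termwise transport is not literally a rewriting sequence. The decorations $\ast_\mu=\prec_\mu+\succ_\mu$ produce several trees, and the symmetry $\prec_\mu=-\succ_\mu^{(12)}$ turns non-shuffle trees into shuffle trees with different decorations and signs. As a result, some intermediate terms may fail to be leading monomials, or may be smaller than the ambiguity in a different way than expected. At this step we would fall back on explicit computation. We expand both sides of each of the four forks completely into the shuffle basis of arity $4$, where the irreducible monomials are those containing no left-comb subtree whose decorations form one of the three leading patterns. We then check coefficient equality; this is a finite linear-algebra verification, with at most a few dozen terms per fork. As a consistency check, the number of irreducible monomials in arity $4$ should be $4^3=64$, the dimension of $\prelie(4)$.
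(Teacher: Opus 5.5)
Your proposal is correct and is essentially the paper's proof: confluence of the rewriting system via Remark~\ref{lem:rs}, the four local forks in~\eqref{eq:ovepll} from Proposition~\ref{prop:lie}, and each diamond obtained as the disuccessor of the Lie diamond of Figure~\ref{fig:dilie}; the paper writes out only the $(\prec_\mu,\prec_\mu,\prec_\mu)$ fork (Figure~\ref{fig:diplie}) and calls the rest similar, whereas you would verify all four explicitly. Two small remarks: it is $\prec_\mu$, not $\succ_\mu$, that is right pre-Lie (substituting $x\succ_\mu y=-(y\prec_\mu x)$ turns $\su_1(R_\lie)$ into exactly the right pre-Lie identity for $\prec_\mu$, while $\succ_\mu$ is left pre-Lie), and your count of $64$ normal monomials in arity $4$ is more than a consistency check, since for quadratic relations all compositions live in arity $4$ and agreement with $\dim\prelie(4)=4^3$ already forces every composition to reduce to $0$.
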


\begin{proof}
Using Remark~\ref{lem:rs}, we employ the method of  rewriting systems to prove that $\su(R_\lie)$ is a Gr\"obner-Shirshov basis.
There are four ambiguities (equivalently, local forks) listed in~\eqref{eq:ovepll}.
We claim that the diamond for the operad $\su(\lie )=\prelie$ is the disuccessor of Figure~\ref{fig:dilie}.
\begin{figure}[h]
\centering
\begin{tikzpicture}
\node[draw=none,fill=none] at (0,0) (lie1) {\threeoll {\mu}{\mu}4{\mu}312};
\node[draw=none,fill=none] at (-4,-0.45) (lie2) {\threeolr {\mu}{\mu}41{\mu}23+ \threeoll {\mu}{\mu}4{\mu}213};
\node[draw=none,fill=none] at (-4,-3.05) (lie3) {\threeoY {\mu}{\mu}{\mu}1324+\threeoll {\mu}{\mu}2{\mu}413+\threeorl {\mu}1{\mu}{\mu}423+\threeoY {\mu}{\mu}{\mu}1423};
\node[draw=none,fill=none] at (4,-0.45) (lie4) {\threeoY {\mu}{\mu}{\mu}1234+\threeoll {\mu}{\mu}3{\mu}214};
\node[draw=none,fill=none] at (4,-3.05) (lie5) {\threeolr {\mu}{\mu}31{\mu}24+ \threeoll {\mu}{\mu}3{\mu}214+\threeorr {\mu}1{\mu}2{\mu}34+\threeolr {\mu}{\mu}21{\mu}34};
\node[draw=none,fill=none] at (0,-5.45) (lie6) {
\threeoY {\mu}{\mu}{\mu}1324+ \threeolr {\mu}{\mu}21{\mu}34+\threeoll {\mu}{\mu}2{\mu}314+\threeorr {\mu}1{\mu}2{\mu}34+\threeorl {\mu}1{\mu}{\mu}324+\threeoY {\mu}{\mu}{\mu}1423};
\draw [->] (lie1) -- (lie2);
\draw [->] (lie2) -- (lie3);
\draw [->] (lie3) -- (lie6);
\draw [->] (lie1) -- (lie4);
\draw [->] (lie4) -- (lie5);
\draw [->] (lie5) -- (lie6);
\end{tikzpicture}
\caption{The diamond for the operad $\lie$}
\label{fig:dilie}
\end{figure}
We just check one diamond of them, see Figure~\ref{fig:diplie}, as the others are similar.
\begin{figure}[H]
\centering
\begin{tikzpicture}
\node[draw=none,fill=none] at (0,0) (lie1) {\threeoll {\prec_\mu}{\prec_\mu}4{\prec_\mu}312};
\node[draw=none,fill=none] at (-4,-0.45) (lie2) {\threeolr {\prec_\mu}{\prec_\mu}41{\ast_\mu}23+ \threeoll {\prec_\mu}{\prec_\mu}4{\prec_\mu}213};
\node[draw=none,fill=none] at (-4,-3.05) (lie3) {\threeoY {\prec_\mu}{\prec_\mu}{\ast_\mu}1324+\threeoll {\prec_\mu}{\prec_\mu}2{\prec_\mu}413+\threeorl {\prec_\mu}1{\ast_\mu}{\ast_\mu}423+\threeoY {\prec_\mu}{\prec_\mu}{\prec_\mu}1423};
\node[draw=none,fill=none] at (4,-0.45) (lie4) {\threeoY {\prec_\mu}{\prec_\mu}{\ast_\mu}1234+\threeoll {\prec_\mu}{\prec_\mu}3{\prec_\mu}214};
\node[draw=none,fill=none] at (4,-3.05) (lie5) {\threeolr {\prec_\mu}{\prec_\mu}31{\ast_\mu}24+ \threeoll {\prec_\mu}{\prec_\mu}3{\prec_\mu}214+\threeorr {\prec_\mu}1{\ast_\mu}2{\ast_\mu}34+\threeolr {\prec_\mu}{\prec_\mu}21{\ast_\mu}34};
\node[draw=none,fill=none] at (0,-5.45) (lie6) {
\threeoY {\prec_\mu}{\prec_\mu}{\ast_\mu}1324+ \threeolr {\prec_\mu}{\prec_\mu}21{\ast_\mu}34+\threeoll {\prec_\mu}{\prec_\mu}2{\prec_\mu}314+\threeorr {\prec_\mu}1{\ast_\mu}2{\ast_\mu}34+\threeorl {\prec_\mu}1{\ast_\mu}{\ast_\mu}324+\threeoY {\prec_\mu}{\prec_\mu}{\ast_\mu}1423};
\draw [->] (lie1) -- (lie2);
\draw [->] (lie2) -- (lie3);
\draw [->] (lie3) -- (lie6);
\draw [->] (lie1) -- (lie4);
\draw [->] (lie4) -- (lie5);
\draw [->] (lie5) -- (lie6);
\end{tikzpicture}
\caption{One diamond for the operad $\prelie$}
\label{fig:diplie}
\end{figure}
Therefore the set $\su(R_\lie)$ is a Gr\"obner-Shirshov basis.
\end{proof}

\subsection{Disuccessor of the operad $\prelie$ and Gr\"obner-Shirshov bases}
\mlabel{sec:prel operad}
Continuing in the same vein, we now focus on the pre-Lie operad $\prelie$ as a further example to explore the disuccessor construction for binary operads. This leads naturally to the study of L-dendriform algebras, which arise as algebraic structures associated with the disuccessor of the $\prelie$ operad.

We begin by recalling the definition and key properties of L-dendriform algebras, which will serve as the foundation for our subsequent operadic analysis.

Building on the notion of left L-dendriform algebras~\cite{BLN10,NB14}, we introduce the concept of right L-dendriform algebras by considering the opposite operations.

\begin{defn}
 A {\bf (right) L-dendriform algebra} $(A, \prec, \succ)$ is a vector space $A$ equipped with two bilinear operations $\prec, \succ: A \otimes A \rightarrow A$ such that for any $x, y, z \in A$,
$$
\begin{gathered}
(x \succ y) \prec z+x \succ(z \succ y)=x \succ(y \prec  z)+(x \ast z) \succ y,\\
(x \prec y) \prec z+ x\prec(z\ast  y)=x \prec(y \ast z)+(x \prec z) \prec y,
\end{gathered}
$$
where $x \ast y :=x \prec y+x \succ y$.
\end{defn}

The symmetric operad $\prelie$ of right pre-Lie algebras, which is equal to the disucessor $\su(\lie)$ of the Lie operad $\lie$, equips with two binary operations $\bullet$, $\bullet':=\bullet^{(12)}$ and three relations
\begin{align}
\R_\prelie:=&\llbar{\twool {\bullet}{\bullet}312}-\twoor {\bullet}1{\bullet}23+\twoor {\bullet}1{\bullet'}23-\twool {\bullet}{\bullet}213
=\su_1(R_\lie)|_{\prec_\mu\mapsto \bullet,\, \succ_\mu\mapsto-\bullet' },\nonumber\\
S_\prelie:=&\llbar{\twool {\bullet}{\bullet'}312}-\twoor {\bullet'}1{\bullet}23+\twool {\bullet'}{\bullet'}213-\twool {\bullet'}{\bullet}213
=\su_2(R_\lie)|_{\prec_\mu\mapsto \bullet,\, \succ_\mu\mapsto-\bullet'} ,\label{eq:prelie}\\
T_\prelie:=&\llbar{\twool {\bullet'}{\bullet'}312}-\twool {\bullet'}{\bullet}312-\twoor {\bullet'}1{\bullet'}23+\twool {\bullet}{\bullet'}213
=\su_3(R_\lie)|_{\prec_\mu\mapsto \bullet,\, \succ_\mu\mapsto-\bullet'}.\nonumber
\end{align}
Replacing the operation $\prec_\mu$  (resp. $\succ_\mu$)  by $\bullet$ (resp. $-\bullet^{(12)}$), these relations are
equal to $\su(R_\lie)$.

Assume $$\oneo {\bullet'}12 > \oneo \bullet12.$$
Using the leaf-lexicographic-path order in Definition~\mref{defn:lexpath}, the leading terms are
\begin{equation}\label{eq:leadingterms}
\begin{split}
\lbar{\R_\prelie}&=\twool {\bullet}{\bullet}312=\lbar{\su_1(R_\lie)|_{\prec_\mu\mapsto \bullet,\, \succ_\mu\mapsto-\bullet' }},\\
\quad\lbar{S_\prelie}&=\twool {\bullet}{\bullet'}312=\lbar{\su_2(R_\lie)|_{\prec_\mu\mapsto \bullet,\, \succ_\mu\mapsto-\bullet' }}, \\
\quad\lbar{T_\prelie}&=\twool {\bullet'}{\bullet'}312=\lbar{\su_3(R_\lie)|_{\prec_\mu\mapsto \bullet,\, \succ_\mu\mapsto-\bullet' }}.
\end{split}
\end{equation}
These three monomials of~\eqref{eq:leadingterms} have four ambiguities
\begin{equation}\label{eq:ovepl}
\begin{split}
T:=\lbar{\R_\prelie}\circa \lbar{\R_\prelie}=\threeoll {\bullet}{\bullet}4{\bullet}312,\quad U:=\lbar{\R_\prelie}\circa\lbar{S_\prelie}=\threeoll {\bullet}{\bullet}4{\bullet'}312,\\
V:=\lbar{S_\prelie}\circa\lbar{T_\prelie}=\threeoll {\bullet}{\bullet'}4{\bullet'}312,\quad W:=\lbar{T_\prelie}\circa\lbar{T_\prelie}=\threeoll {\bullet'}{\bullet'}4{\bullet'}312.
\end{split}
\end{equation}

\begin{lemma}~\cite[Proposition 6.11]{BBG13}\label{prop:preLD}
The operad $\LDend$ of L-dendriform algebras is the disuccessor of $\prelie$, i.e.,
\[
\su(\prelie)=\LDend.\]
\end{lemma}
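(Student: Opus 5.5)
The statement to prove is Lemma~\ref{prop:preLD}: $\su(\prelie)=\LDend$. The plan is to compute $\su(\prelie)$ explicitly from Definition~\ref{defn:suc} and then match its relations, as an $\BS$-module, with the two defining identities of (right) L-dendriform algebras.

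First, fix the presentation of $\prelie$. It is generated by $\bullet$ and $\bullet'=\bullet^{(12)}$. Since $\BS_3$-equivariance is allowed, $\R_\prelie$ alone generates the relation module, so a single relation suffices. In algebraic form this is
\[
r(x,y,z)=(x\bullet y)\bullet z-x\bullet(y\bullet z)-(x\bullet z)\bullet y+x\bullet(z\bullet y).
\]
Following Proposition~\ref{succpath}, the disuccessor has generators $\prec:=\prec_\bullet$ and $\succ:=\succ_\bullet$. The $\BS_2$-action is $\prec^{(12)}=\succ_{\bullet'}$ and $\succ^{(12)}=\prec_{\bullet'}$, so every operation can be written using only $\prec,\succ$ with arguments permuted. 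I then compute $\su_x(r)$ for $x=x,y,z$. For each tree monomial, I decorate the vertices on the path to the chosen leaf by $\prec$ (turning left) or $\succ$ (turning right), and the vertices off the path by $\ast=\prec+\succ$.

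The three cases give the following relations:
\begin{align*}
\su_x(r)&:\ (x\prec y)\prec z-x\prec(y\ast z)-(x\prec z)\prec y+x\prec(z\ast y),\\
\su_y(r)&:\ (x\succ y)\prec z-x\succ(y\prec z)-(x\ast z)\succ y+x\succ(z\succ y),\\
\su_z(r)&:\ (x\ast y)\succ z-x\succ(y\succ z)-(x\succ z)\prec y+x\succ(z\prec y).
\end{align*}
The first is exactly the second L-dendriform identity. The second is exactly the first identity.

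The main point, and the only real obstacle, is the third relation $\su_z(r)$. I will show it is the image of $\su_y(r)$ under the transposition $y\leftrightarrow z$, up to sign. Swapping $y$ and $z$ in $\su_y(r)$ gives
\[
(x\succ z)\prec y-x\succ(z\prec y)-(x\ast y)\succ z+x\succ(y\succ z)=-\su_z(r).
\]
Hence $\su_z(r)$ lies in the $\BS_3$-submodule generated by $\su_y(r)$. This is the disuccessor shadow of the $y\leftrightarrow z$ antisymmetry of the pre-Lie relation.

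It follows that the $\BS$-module $\su(R)$ coincides with the span of the two L-dendriform relations, and the two operads are equal. The remaining care is bookkeeping: checking that the sign conventions coming from $\prec_\bullet^{(12)}=\succ_{\bullet'}$ are consistent when off-path vertices decorated $\bullet'$ are rewritten in terms of $\prec,\succ$ with swapped inputs.
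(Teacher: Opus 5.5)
Your proposal is correct. The three relations $\su_x(r)$, $\su_y(r)$, $\su_z(r)$ are as you state. The first two are literally the two defining identities of right L-dendriform algebras, and $\su_z(r)$ is $-\su_y(r)$ with $y$ and $z$ interchanged.

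The paper gives no argument of its own for this lemma; it only cites \cite[Proposition 6.11]{BBG13}. The closest computation in the text is \eqref{eq:oldend}. There $\su$ is applied to the three shuffle relations $\R_\prelie$, $S_\prelie$, $T_\prelie$, and the resulting nine relations are later read off as \eqref{eq:9rel1}--\eqref{eq:9rel9}.

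Your route differs in using the single $\BS_3$-generator $r=\R_\prelie$. Indeed $S_\prelie=r(x_2,x_1,x_3)$ and $T_\prelie=r(x_3,x_1,x_2)$, so by $\BS$-equivariance of $\su$ both choices give the same relation module. Your version is more economical and shows why only two identities survive: the antisymmetry $r(x,z,y)=-r(x,y,z)$ is inherited by the disuccessors. The paper's shuffle version is the one it needs afterwards, because the Gr\"obner--Shirshov argument requires relations written in shuffle tree monomials with prescribed leading terms.

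The \emph{sign bookkeeping} you defer at the end is vacuous. Here $\bullet'=\bullet^{(12)}$ carries no sign, unlike $\mu=-\mu^{(12)}$ for $\lie$. All you use is that the path rule gives the same result on a $\bullet$-vertex with inputs $(a,b)$ and on the same vertex viewed as a $\bullet'$-vertex with inputs $(b,a)$. That is exactly what $\prec_\bullet^{(12)}=\succ_{\bullet'}$ and $\succ_\bullet^{(12)}=\prec_{\bullet'}$, hence $\ast_\bullet^{(12)}=\ast_{\bullet'}$, guarantee.
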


Notice that the set of operations of $\prelie$ operad is  $\cal V_{\prelie}=\{\bullet, \bullet':=\bullet^{(12)}\}$. 
By Definition~\ref{defn:suc}, we have $\widetilde{\cal V_{\prelie}}=\{\succ_\bullet, \succ_{\bullet'}, \prec_\bullet, \prec_{\bullet'}\}$ and
$$\succ_\bullet^{(12)}=\prec_{\bullet'}, \prec_\bullet^{(12)}=\succ_{\bullet'}.$$
In order to simplify the symbols, we set
\begin{equation}\label{eq:ldop}
\succ:=\succ_\bullet, \quad\succ':=\succ_\bullet^{(12)}=\prec_{\bullet'},
\quad \prec:=\prec_\bullet, \quad \prec':=\prec_\bullet^{(12)}=\succ_{\bullet'},\quad\ast:=\prec+\succ,\quad\ast':=\prec'+\succ'.
\end{equation}

Similar to Propositions~\ref{prop:as} and~\mref{prop:lie}, we have the following result.
\begin{prop}
Assume that $\cal{T}_{\shuffle}(\widetilde{\cal V_{\prelie}})$  is equipped with the leaf-lexicographic-path order in which the elements of $\cal V$ are ordered from largest to smallest as $\succ', \,\prec',\,\succ,\,\prec $. Then
 \begin{equation*}
\lbar{\su_{i}(R)}=\lbar{\su_{i}(\lbar{R })},\quad \text{ for }\, i=1,2,3\,\text{ and }\, R\in \{\R_\prelie,S_\prelie,T_\prelie\},
\end{equation*}
and the following equations hold,
\allowdisplaybreaks{{\small{
\begin{align}
\lbar{\su_1(T)}=&\lbar{\su_1(\lbar{\R_\prelie})}\circa\lbar{\su_1(\lbar{\R_\prelie})},
\quad
\lbar{\su_2(T)}=\lbar{\su_1(\lbar{\R_\prelie})}\circa\lbar{\su_2(\lbar{\R_\prelie})},\nonumber\\
\lbar{\su_3(T)}=&\lbar{\su_2(\lbar{\R_\prelie})}\circa\lbar{\su_3(\lbar{\R_\prelie})},
\quad
\lbar{\su_4(T)}=\lbar{\su_3(\lbar{\R_\prelie})}\circa\lbar{\su_1(\lbar{\R_\prelie})},\nonumber\\
\lbar{\su_1(U)}=&\lbar{\su_1(\lbar{\R_\prelie})}\circa\lbar{\su_1(\lbar{S_\prelie})},
\quad
\lbar{\su_2(U)}=\lbar{\su_1(\lbar{\R_\prelie})}\circa\lbar{\su_2(\lbar{S_\prelie})},\nonumber\\
\lbar{\su_3(U)}=&\lbar{\su_2(\lbar{\R_\prelie})}\circa\lbar{\su_3(\lbar{S_\prelie})},
\quad
\lbar{\su_4(U)}=\lbar{\su_3(\lbar{\R_\prelie})}\circa\lbar{\su_2(\lbar{S_\prelie})},\nonumber\\
\lbar{\su_1(V)}=&\lbar{\su_1(\lbar{S_\prelie})}\circa\lbar{\su_1(\lbar{T_\prelie})},
\quad
\lbar{\su_2(V)}=\lbar{\su_1(\lbar{S_\prelie})}\circa\lbar{\su_2(\lbar{T_\prelie})},\label{eq:ldendo}\\
\lbar{\su_3(V)}=&\lbar{\su_2(\lbar{S_\prelie})}\circa\lbar{\su_3(\lbar{T_\prelie})},
\quad
\lbar{\su_4(V)}=\lbar{\su_3(\lbar{S_\prelie})}\circa\lbar{\su_3(\lbar{T_\prelie})},\nonumber\\
\lbar{\su_1(W)}=&\lbar{\su_1(\lbar{T_\prelie})}\circa\lbar{\su_1(\lbar{T_\prelie})},
\quad
\lbar{\su_2(W)}=\lbar{\su_1(\lbar{T_\prelie})}\circa\lbar{\su_2(\lbar{T_\prelie})},\nonumber\\
\lbar{\su_3(W)}=&\lbar{\su_2(\lbar{T_\prelie})}\circa\lbar{\su_3(\lbar{T_\prelie})},
\quad
\lbar{\su_4(W)}=\lbar{\su_3(\lbar{T_\prelie})}\circa\lbar{\su_1(\lbar{T_\prelie})},\nonumber
\end{align}}}}
where $T, U, V$ and $W$ are in~\eqref{eq:ovepl}.
\end{prop}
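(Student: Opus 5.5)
The plan follows the proofs of Propositions~\ref{prop:as} and~\ref{prop:lie}: compute everything explicitly and compare leading monomials with the leaf-lexicographic-path order. Two features of the order do most of the work. First, leaf label sequences are compared before path sequences, so among trees with different leaf labelings the one with labeling $312$ (for arity $3$) or $4123$-type (for arity $4$) dominates. Second, within a fixed labeling, the path sequence is compared word by word, with $\succ'>\prec'>\succ>\prec$.

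\emph{Step 1 (first family of equalities).} For each $R\in\{\R_\prelie,S_\prelie,T_\prelie\}$ from \eqref{eq:prelie} and each leaf $x\in\{1,2,3\}$, I expand $\su_x(R)$ using Proposition~\ref{succpath}. On the path to $x$, a vertex decorated $\bullet$ becomes $\prec$ or $\succ$, and a vertex decorated $\bullet'$ becomes $\prec'$ or $\succ'$, via \eqref{eq:ldop}. Off the path, a vertex becomes $\ast$ or $\ast'$, which I expand into two monomials.

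The leading monomial $\lbar R$ is the unique term of $R$ whose leaf labeling is $(3,1,2)$ in the left comb; all other terms have labeling $123$ or $213$. By the reverse-degree-lex rule these are smaller, so every monomial coming from them is smaller than every monomial coming from $\lbar R$. It follows that $\lbar{\su_x(R)}$ is the largest monomial in $\su_x(\lbar R)$.

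When $x=3$, the root vertex is off the path, so $\ast$ or $\ast'$ splits into two terms and we keep the larger one under the chosen order. This yields $\lbar{\su_i(R)}=\lbar{\su_i(\lbar R)}$. One subtlety must be checked: the $\BS_2$-action on $\widetilde{\cal V_\prelie}$ may rewrite a non-shuffle term into shuffle form with a different leaf sequence. I will verify, as in Proposition~\ref{prop:lie}, that this rewriting never produces a term with labeling $312$ from the lower terms.

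\emph{Step 2 (ambiguities).} I first list all ambiguities among the nine leading monomials $\lbar{\su_i(R)}$. Each leading monomial is a left comb $(3,1,2)$ decorated by two symbols from $\{\prec,\succ,\prec',\succ'\}$. An intersection ambiguity therefore arises exactly when the lower vertex of one comb equals the upper vertex of another. This gives the arity-$4$ left comb with labeling $4,3,1,2$ and three decorations, and these decorated combs lie above $T,U,V,W$ of \eqref{eq:ovepl}.

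Next, for each of $T,U,V,W$ and each leaf $i\in\{1,2,3,4\}$, I compute $\lbar{\su_i(\cdot)}$. By the same argument as in Step~1, it is obtained by decorating the path to leaf $i$ with $\prec/\succ$ (respectively $\prec'/\succ'$) according to turns. Off-path vertices take their maximal decoration, and this affects only leaves $3$ and $4$, whose paths meet the upper vertices.

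I then compare each result with the corresponding overlap $\lbar{\su_{i_1}(\cdot)}\circa\lbar{\su_{i_2}(\cdot)}$ listed in \eqref{eq:ldendo}. The index pattern $(1,1),(1,2),(2,3),(3,\cdot)$ reflects which vertex carries the path in each piece.

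\emph{Main obstacle.} The delicate entries are the $\su_4$ cases, for example $\lbar{\su_4(T)}=\lbar{\su_3(\lbar{\R_\prelie})}\circa\lbar{\su_1(\lbar{\R_\prelie})}$. Here the maximal off-path choice depends on whether the vertex is primed, and the order $\succ'>\prec'>\succ>\prec$ makes $\ast$ and $\ast'$ resolve differently. I would also need to confirm the exhaustiveness claim: every overlap among the nine leading monomials appears in \eqref{eq:ldendo}. I would settle this by a case check on the middle vertex decoration.
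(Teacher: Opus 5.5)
Your method is the same as the paper's: expand, read off leading terms, match overlaps. But the step you call the main obstacle cannot be completed, because the printed last column of \eqref{eq:ldendo} is wrong in places, and your proposed explanation does not apply.

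Run your own Step~2 recipe on $T=((1\bullet 2)\bullet 3)\bullet 4$. Only the root lies on the path to leaf $4$, and it turns right, so $\su_4(T)=((1\ast 2)\ast 3)\succ 4$, with leading monomial $((1\succ 2)\succ 3)\succ 4$. In $\bar f\circa\bar g$ the inner vertex of $\bar f$ is the root of $\bar g$, so this is $\lbar{\su_3(\lbar{\R_\prelie})}\circa\lbar{\su_3(\lbar{\R_\prelie})}$. The printed $\lbar{\su_3(\lbar{\R_\prelie})}\circa\lbar{\su_1(\lbar{\R_\prelie})}$ is not an ambiguity at all, since the shared vertex would have to carry both $\succ$ and $\prec$. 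Since $T$ has no primed vertex, no interplay between $\ast$ and $\ast'$ can rescue it. Likewise $\lbar{\su_4(U)}=((1\succ' 2)\succ 3)\succ 4=\lbar{\su_3(\lbar{\R_\prelie})}\circa\lbar{\su_3(\lbar{S_\prelie})}$, whereas the printed partner $\lbar{\su_2(\lbar{S_\prelie})}$ has root $\prec$. These two failures hold under any convention.

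For $V$ and $W$ the outcome depends on how you name $\prec_{\bullet'}$ and $\succ_{\bullet'}$, and you must fix this once.
\begin{itemize}
\item With the naming of \eqref{eq:oldend} and \eqref{eq:leadld1}--\eqref{eq:leadld3}, a left turn at a $\bullet'$-vertex is written $\prec'$. Then the maximal decoration of each vertex type is its right-turn one. The whole last column is $\lbar{\su_3}\circa\lbar{\su_3}$, as in Propositions~\ref{prop:as} and~\ref{prop:lie}. The $16$ overlaps of the nine leading terms (four for each decoration of the shared middle vertex) are then exactly the leading terms of the $\su_i$ of $T,U,V,W$.
\item If instead you read \eqref{eq:ldop} literally, as you announce ($\prec_{\bullet'}=\succ'$), several of the nine leading terms differ from \eqref{eq:leadld2}--\eqref{eq:leadld3}. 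In your index notation the last column for $T,U,V,W$ becomes $(3,3),(3,3),(3,1),(3,1)$. Your exhaustiveness check also fails: there are $18$ overlaps. For instance, $((1\prec' 2)\succ' 3)\succ 4=\lbar{\su_3(\lbar{S_\prelie})}\circa\lbar{\su_2(\lbar{T_\prelie})}$ is only a non-leading monomial of $\su_4(V)$.
\end{itemize}
Either way, your computation yields a corrected table, not a verification of the printed one. The paper's one-line ``direct computation'' passes over the same point.

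Step~1 reaches the right conclusion for the wrong reason.
\begin{itemize}
\item $\lbar{\R_\prelie}$ is $(1\bullet 2)\bullet 3$, whose leaf sequence is $123$, not $312$. Reverse degree-lex would rank $123$ above $312$ anyway.
\item The right comb $1\bullet(2\bullet 3)$ has the same leaf sequence; only $(1\bullet 3)\bullet 2$ has $132$. So the right-comb terms lose only because their first path word has length $1<2$, a property $\su_x$ preserves.
\item $T_\prelie$ contains $-(1\bullet 2)\bullet' 3$, with the same shape and labels as $\lbar{T_\prelie}$. Its disuccessor terms are smaller only because, in the chosen order, the primed symbols dominate the unprimed ones.
\end{itemize}
With these observations, $\lbar{\su_x(R)}=\lbar{\su_x(\lbar R)}$ is justified. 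The $\BS_2$-rewriting worry is vacuous, since $\su_x$ merely redecorates shuffle trees.
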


\begin{proof}
Applying Lemma~\ref{prop:preLD} and notations in~\eqref{eq:ldop},
we have
{\small{\begin{align}
\su_1(\R_\prelie)=&\twool {\prec}{\prec}312
-\twoor {\prec}1{\ast}23
-\twool {\prec}{\prec}213
+\twoor {\prec}1{\ast'}23,\nonumber\\
\su_2(\R_\prelie)=&\twool {\prec}{\succ}312
-\twoor {\succ}1{\prec}23
-\twool {\succ}{\ast}213
+\twoor {\succ}1{\prec'}23,\nonumber\\
\su_3(\R_\prelie)=&\twool {\succ}{\ast}312
-\twoor {\succ}1{\succ}23
-\twool {\prec}{\succ}213
+\twoor {\succ}1{\succ'}23,\nonumber\\
\su_1(S_\prelie)=&\twool {\prec}{\prec'}312
-\twoor {\prec'}1{\ast}23
-\twool {\prec'}{\prec }213
+\twool {\prec'}{\prec'}213,\nonumber\\
\su_2(S_\prelie)=&\twool {\prec}{\succ'}312
-\twoor {\succ'}1{\prec}23
-\twool {\succ'}{\ast}213
+\twool {\succ'}{\ast'}213,\label{eq:oldend}\\
\su_3(S_\prelie)=&\twool {\succ}{\ast'}312
-\twoor {\succ'}1{\succ}23
-\twool {\prec'}{\succ}213
+\twool {\prec'}{\succ'}213,\nonumber\\
\su_1(T_\prelie)=&\twool {\prec'}{\prec'}312
-\twool {\prec'}{\prec}312
-\twoor {\prec'}1{\ast'}23
+\twool {\prec}{\prec'}213,
\nonumber\\
\su_2(T_\prelie)=&\twool {\prec'}{\succ'}312
-\twool {\prec'}{\succ}312
-\twoor {\succ'}1{\prec'}23
+\twool {\succ}{\ast'}213,
\nonumber\\
\su_3(T_\prelie)=&\twool {\succ'}{\ast'}312
-\twool {\succ'}{\ast}312
-\twoor {\succ'}1{\succ'}23
+\twool {\prec}{\succ'}213.
\nonumber
\end{align}}}
From the above nine equations, we can obtain the $\LDend$ operad relations.
Since the order from the biggest to the smallest is $\succ', \,\prec',\,\succ,\,\prec$, according to the leaf-lexicographic-path order,
the nine leading monomials of $\LDend$ operad are
 {\small{\begin{align}
\lbar{\su_1(\R_\prelie)}=&\twool {\prec}{\prec}312,\quad
\lbar{\su_2(\R_\prelie)}=\twool {\prec}{\succ}312,\quad
\lbar{\su_3(\R_\prelie)}=\twool {\succ}{\succ}312,\quad\label{eq:leadld1}\\
\lbar{\su_1(S_\prelie)}=&\twool {\prec}{\prec'}312,\quad
\lbar{\su_2(S_\prelie)}=\twool {\prec}{\succ'}312,\quad
\lbar{\su_3(S_\prelie)}=\twool {\succ}{\succ'}312,\label{eq:leadld2}\quad\\
\lbar{\su_1(T_\prelie)}=&\twool {\prec'}{\prec'}312,\quad
\lbar{\su_2(T_\prelie)}=\twool {\prec'}{\succ'}312,\quad
\lbar{\su_3(T_\prelie)}=\twool {\succ'}{\succ'}312.\label{eq:leadld3}
\end{align}}}
Hence, by a direct computation together with~(\mref{eq:ovepl}), we obtain~\eqref{eq:ldendo}.
\end{proof}

We arrive at our main result in this subsection.
\begin{theorem}\label{thm:gsbases}
Assume that $\cal{T}_{\shuffle}(\widetilde{\cal V_{\prelie}})$ is equipped with the leaf-lexicographic-path order in which the elements of $\widetilde{\cal V_{\prelie}}$
are ordered from largest to smallest as $\succ', \,\prec',\,\succ,\,\prec $.
Then the set of relations $\su(R_\prelie) = R_{\mathop{\LDend}}$ of $\mathop{\LDend}$ operad is a Gr\"obner-Shirshov basis in $\bfk\cal T_{\shuffle}(\widetilde{\cal V_{\prelie}})$.
\end{theorem}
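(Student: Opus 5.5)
By Remark~\ref{lem:rs}, it suffices to show that the rewriting system $\Pi_{\su(R_\prelie)}$ is confluent on local forks. Termination is automatic, since the leaf-lexicographic-path order is a monomial well-order. The first task is to list all ambiguities among the nine leading monomials \eqref{eq:leadld1}--\eqref{eq:leadld3}. Each of them has the shape of a left comb with leaf labels $(1,2)$ then $3$. Hence no including compositions occur: all relations are quadratic and their leading monomials are pairwise distinct. Every intersection composition lives in arity $4$ on the left comb with leaves $((1,2),3),4$. Such an ambiguity is determined by a triple of decorations $(a,b,c)$ read from the root upward. It is an ambiguity exactly when both $(a,b)$ (root pair) and $(b,c)$ (upper pair) occur as decoration pairs of leading monomials.

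Reading off the admissible pairs $(\prec,\prec),(\prec,\succ),(\succ,\succ),(\prec,\prec'),(\prec,\succ'),(\succ,\succ'),(\prec',\prec'),(\prec',\succ'),(\succ',\succ')$, I will enumerate the triples. I expect these to be precisely the sixteen monomials appearing on the right-hand sides of \eqref{eq:ldendo}. The preceding proposition then lets me identify each of them as $\lbar{\su_i(X)}$ for $X\in\{T,U,V,W\}$ and $i\in\{1,2,3,4\}$. If a stray triple appears that is not of this form, I will treat it separately by a direct diamond.

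For the remaining triples, confluence comes from transporting the $\prelie$ diamonds. By Theorem~\ref{thm:lipre}, and by the substitution $\prec_\mu\mapsto\bullet$, $\succ_\mu\mapsto-\bullet'$ in \eqref{eq:prelie}, each of the four ambiguities $T,U,V,W$ of $\prelie$ admits a diamond, which is Figure~\ref{fig:dilie} rewritten. Each rewriting step there is of the form $q|_{\bar r}\to q|_{\bar r-r}$ with $r\in\{\R_\prelie,S_\prelie,T_\prelie\}$. Applying $\su_x$ to every node of such a diamond, with $x$ the leaf determined by $i$, should give a diamond for $\su_i(X)$. The reason is that $\su_x$ is compatible with grafting, in the sense that $\su_x(q|_r)=\su_{x'}(q)|_{\su_{x''}(r)}$ or $(\su q)|_{\ast\text{-version of } r}$. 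It is also compatible with leading terms, since $\lbar{\su_i(R)}=\lbar{\su_i(\lbar R)}$ by the previous proposition.

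Concretely, each step of the $\prelie$ diamond lifts to either one rewriting step or a sum of rewriting steps by relations in $\su(R_\prelie)$. The sum case arises when the rewritten subtree lies off the path to $x$, where $\ast=\prec+\succ$ produces both $\su$-relations summed. This is how Figure~\ref{fig:diplie} arises from Figure~\ref{fig:dilie}, and I would display one such lifted diamond, say for $\su_1(T)$, as a model.

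The main obstacle is this lifting step. Off-path vertices carry $\ast$ and $\ast'$. When a subtree not on the path is rewritten, I must check two things: that the sum of the two successor relations rewrites the $\ast$-decorated monomial exactly as the $\prelie$ relation prescribes, and that the reductions stay below the ambiguity, so that each step is a genuine congruence modulo $(\su(R_\prelie),w)$. The symmetric relations $\succ'=\succ^{(12)}$ and $\prec'=\prec^{(12)}$ must also be tracked: relabelling leaves can turn $\prec$ into $\succ'$, which moves terms between shuffle trees. If the functorial argument becomes delicate there, I will fall back on verifying the remaining diamonds by direct computation, in the way the paper does for Figure~\ref{fig:didend}. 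Once all ambiguities close, $\su(R_\prelie)=R_{\LDend}$ (Lemma~\ref{prop:preLD}) is a Gr\"obner-Shirshov basis.
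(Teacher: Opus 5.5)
Your plan is essentially the paper's proof. You reduce via Remark~\ref{lem:rs} to the sixteen left-comb ambiguities $\lbar{\su_i(X)}$, with $X\in\{T,U,V,W\}$ and $i\in\{1,2,3,4\}$, and obtain each $\LDend$ diamond as the disuccessor of the corresponding $\prelie$ diamond, writing out one lifted diamond and handling the rest by analogy or direct computation, exactly as the paper does. Your enumeration of admissible decoration triples does yield exactly these sixteen with no stray ambiguity, and you insist that the lifted reductions stay below the ambiguity (the off-path $\ast$-version of a relation $r$ is $\su_1(r)+\su_2(r)+\su_3(r)$, three relations rather than the two you mention); together these justify the transport step more carefully than the paper, which only asserts it.
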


\begin{proof}
According to Remark \ref{lem:rs}, we utilize the method of rewriting systems to establish that $\su(R_{\prelie})$ constitutes a Gr\"obner-Shirshov basis.
There are sixteen ambiguities (equivalently, local forks) listed in~\eqref{eq:ldendo}.
To illustrate the rewriting process, we focus on the monomial $\lbar{\su_1(T)}$ in~\eqref{eq:ldendo} and apply the defining relation of the operad $\mathop{\LDend}$. The treatment of other cases proceeds similarly and will be omitted for brevity.

In fact, the diamond (with respect to $\lbar{\su_1(T)}$) for the operad $\su(\prelie )=\LDend$ is the disuccessor of Figure~\ref{fig:diplie}. Indeed, on the one hand, for the divisor that shares its root with $\lbar{\su_1(T)}$, we obtain
\allowdisplaybreaks{\begin{align*}
&\threeoll {\succ'}{\succ'}4{\succ'}312\to \llbar{\threeoY {\succ'}{\succ'}{\ast'}1234}+\threeoll {\succ'}{\succ'}3{\succ'}412-\threeoY {\succ'}{\succ'}{\ast}1234\\
&\to \threeorr {\succ'}1{\ast'}2{\ast'}34+\threeolr {\succ'}{\succ'}21{\ast'}34-\threeorr {\succ'}1{\ast}2{\ast'}34+\threeoll {\succ'}{\succ'}3{\succ'}412-\llbar{\threeoY {\succ'}{\succ'}{\ast}1234}\\
&\to \threeorr {\succ'}1{\ast'}2{\ast'}34+\threeolr {\succ'}{\succ'}21{\ast'}34-\threeorr {\succ'}1{\ast}2{\ast'}34+\llbar{\threeoll {\succ'}{\succ'}3{\succ'}412}-\threeorr {\succ'}1{\ast'}2{\ast}34\\
&-\threeolr {\succ'}{\succ'}21{\ast}34+\threeorr {\succ'}1{\ast}2{\ast}34\\
 &\to \threeorr {\succ'}1{\ast'}2{\ast'}34+\threeolr {\succ'}{\succ'}21{\ast'}34-\threeorr {\succ'}1{\ast}2{\ast'}34+\threeolr {\succ'}{\prec}31{\ast'}24+\llbar{\threeoll {\succ'}{\succ'}3{\succ'}214}\\
&-\threeolr {\succ'}{\succ'}31{\ast}24-\threeorr {\succ'}1{\ast'}2{\ast}34-\threeolr {\succ'}{\succ'}21{\ast}34+\threeorr {\succ'}1{\ast}2{\ast}34\\
 &\to \threeorr {\succ'}1{\ast'}2{\ast'}34+\threeolr {\succ'}{\succ'}21{\ast'}34-\threeorr {\succ'}1{\ast}2{\ast'}34+\llbar{\threeolr {\succ'}{\succ'}31{\ast'}24}+\threeoY {\succ'}{\succ'}{\ast'}1423\\
 &+\threeoll {\succ'}{\succ'}2{\succ'}314-\threeoY {\succ'}{\succ'}{\ast}1423-{\threeolr {\succ'}{\succ'}31{\ast}24}-\threeorr {\succ'}1{\ast'}2{\ast}34-\threeolr {\succ'}{\succ'}21{\ast}34+\threeorr {\succ'}1{\ast}2{\ast}34\\
 &\to \threeorr {\succ'}1{\ast'}2{\ast'}34+\threeolr {\succ'}{\succ'}21{\ast'}34-\threeorr {\succ'}1{\ast}2{\ast'}34+\threeorl {\succ'}1{\ast'}{\ast'}324+\threeoY {\succ'}{\succ'}{\ast'}1324-\threeorl {\succ'}1{\ast}{\ast'}324\\
 &+\threeoY {\succ'}{\succ'}{\ast'}1423+\threeoll {\succ'}{\succ'}2{\succ'}314-\threeoY {\succ'}{\succ'}{\ast}1423-\llbar{\threeolr {\succ'}{\succ'}31{\ast}24}-\threeorr {\succ'}1{\ast'}2{\ast}34\\
 &-\threeolr {\succ'}{\succ'}21{\ast}34+\threeorr {\succ'}1{\ast}2{\ast}34\\
&\to \threeorr {\succ'}1{\ast'}2{\ast'}34+\threeolr {\succ'}{\succ'}21{\ast'}34-\threeorr {\succ'}1{\ast}2{\ast'}34+\threeorl {\succ'}1{\ast'}{\ast'}324+\threeoY {\succ'}{\succ'}{\ast'}1324-\threeorl {\succ'}1{\ast}{\ast'}324\\
 &+\threeoY {\succ'}{\succ'}{\ast'}1423+\threeoll {\succ'}{\succ'}2{\succ'}314-\threeoY {\succ'}{\succ'}{\ast}1423-\llbar{\threeorl {\succ'}1{\ast'}{\ast}324}-\threeoY {\succ'}{\succ'}{\ast}1324+\threeorl {\succ'}1{\ast}{\ast}324\\
 &-\threeorr {\succ'}1{\ast'}2{\ast}34-\threeolr {\succ'}{\succ'}21{\ast}34+\threeorr {\succ'}1{\ast}2{\ast}34\\
 &\to \threeorr {\succ'}1{\ast'}2{\ast'}34+\threeolr {\succ'}{\succ'}21{\ast'}34-\threeorr {\succ'}1{\ast}2{\ast'}34+\threeorl {\succ'}1{\ast'}{\ast'}324+\threeoY {\succ'}{\succ'}{\ast'}1324-\threeorl {\succ'}1{\ast}{\ast'}324\\
 &+\threeoY {\succ'}{\succ'}{\ast'}1423+\threeoll {\succ'}{\succ'}2{\succ'}314-\threeoY {\succ'}{\succ'}{\ast}1423-\threeorl {\succ'}1{\ast}{\ast'}423-\underline{\threeorr {\succ'}1{\ast}2{\ast}34}+\threeorl {\succ'}1{\ast}{\ast}423\\
 &-\threeoY {\succ'}{\succ'}{\ast}1324+\threeorl {\succ'}1{\ast}{\ast}324-\threeorr {\succ'}1{\ast'}2{\ast}34-\threeolr {\succ'}{\succ'}21{\ast}34+\underline{\threeorr {\succ'}1{\ast}2{\ast}34}\\
\end{align*}} 
Here, the two underlined terms cancel.
On the other hand, when the divisor does not share its root with $\lbar{\su_1(T)}$, we obtain
\allowdisplaybreaks{\begin{align*}
  &\threeoll {\succ'}{\succ'}4{\succ'}312\to \threeolr {\succ'}{\succ'}41{\ast'}23+\llbar{\threeoll {\succ'}{\succ'}4{\succ'}213}-\threeolr {\succ'}{\succ'}41{\ast}23\\
  &\to \llbar{\threeolr {\succ'}{\succ'}41{\ast'}23}+\threeoY {\succ'}{\succ'}{\ast'}1324+\threeoll {\succ'}{\succ'}2{\succ'}413-\threeoY {\succ'}{\succ'}{\ast}1324 -\threeolr {\succ'}{\succ'}41{\ast}23\\
  &\to \threeorl {\succ'}1{\ast'}{\ast'}423+\threeoY {\succ'}{\succ'}{\ast'}1423-\threeorl {\succ'}1{\ast}{\ast'}423
  +\threeoY {\succ'}{\succ'}{\ast'}1324+\threeoll {\succ'}{\succ'}2{\succ'}413\\
  &-\threeoY {\succ'}{\succ'}{\ast}1324 -\llbar{\threeolr {\succ'}{\succ'}41{\ast}23}\\
  &\to \llbar{\threeorl {\succ'}1{\ast'}{\ast'}423}+\threeoY {\succ'}{\succ'}{\ast'}1423-\threeorl {\succ'}1{\ast}{\ast'}423
  +\threeoY {\succ'}{\succ'}{\ast'}1324+\threeoll {\succ'}{\succ'}2{\succ'}413\\
  &-\threeoY {\succ'}{\succ'}{\ast}1324 -\threeorl {\succ'}1{\ast'}{\ast}423-\threeoY {\succ'}{\succ'}{\ast}1423+\threeorl {\succ'}1{\ast}{\ast}423\\
  &\to \threeorr {\succ'}1{\ast'}2{\ast'}34+\threeorl {\succ'}1{\ast'}{\ast'}324-\threeorr {\succ'}1{\ast'}2{\ast}34+\threeoY {\succ'}{\succ'}{\ast'}1423-\threeorl {\succ'}1{\ast}{\ast}423\\
  & +\threeoY {\succ'}{\succ'}{\ast'}1324+\threeoll {\succ'}{\succ'}2{\succ'}413-\threeoY {\succ'}{\succ'}{\ast}1324 -\llbar{\threeorl {\succ'}1{\ast'}{\ast}423}-\threeoY {\succ'}{\succ'}{\ast}1423+\threeorl {\succ'}1{\ast}{\ast}423\\
  &\to \threeorr {\succ'}1{\ast'}2{\ast'}34+\threeorl {\succ'}1{\ast'}{\ast'}324-\threeorr {\succ'}1{\ast'}2{\ast}34+\threeoY {\succ'}{\succ'}{\ast'}1423-\threeorl {\succ'}1{\ast}{\ast'}423+\threeoY {\succ'}{\succ'}{\ast'}1324\\
  &+\llbar{\threeoll {\succ'}{\succ'}2{\succ'}413}-\threeoY {\succ'}{\succ'}{\ast}1324
  -\threeorr {\succ'}1{\ast}2{\ast'}34-\threeorl {\succ'}1{\ast}{\ast'}324+\threeorl {\succ'}1{\ast}{\ast}324\\
  & -\threeoY {\succ'}{\succ'}{\ast}1423+\threeorl {\succ'}1{\ast}{\ast}423\\
  &\to \threeorr {\succ'}1{\ast'}2{\ast'}34+\threeorl {\succ'}1{\ast'}{\ast'}324-\threeorr {\succ'}1{\ast'}2{\ast}34+\threeoY {\succ'}{\succ'}{\ast'}1423-\threeorl {\succ'}1{\ast}{\ast'}423+\threeoY {\succ'}{\succ'}{\ast'}1324\\
  &+\threeolr {\succ'}{\succ'}21{\ast'}34+\threeoll {\succ'}{\succ'}2{\succ'}314-\threeolr {\succ'}{\succ'}21{\ast}34
  -\threeoY {\succ'}{\succ'}{\ast}1324-\threeorr {\succ'}1{\ast}2{\ast'}34\\
  & -\threeorl {\succ'}1{\ast}{\ast'}324+\threeorl {\succ'}1{\ast}{\ast}324-\threeoY {\succ'}{\succ'}{\ast}1423+\threeorl {\succ'}1{\ast}{\ast}423.
\end{align*}}
We observe that the $i$-th term in the expansion of the left hand side matches the $\sigma(i)$-th term in the expansion of the right hand side, where $\sigma$ is the following permutation of order $15$:
\begin{equation*}
\begin{pmatrix}
     i \\
     \sigma(i)
\end{pmatrix}
=
\begin{pmatrix}
1 & 2 & 3  & 4 & 5 & 6  & 7 & 8 &  9 & 10 & 11 & 12 & 13 & 14 & 15 &\\
1 & 7 & 11 & 2 & 6 & 12 & 4 & 8 & 14 & 5 &  15 & 10 & 13 &  3 & 9 &
\end{pmatrix}.
\end{equation*}
Thus  the local fork  $\lbar{\su_1(T)}$ is confluent. This completes the proof.
\end{proof}

\begin{proof}
By Theorem~\mref{thm:lipre}, the set $\R_\prelie$ is a Gr\"obner-Shirshov basis, which has diamond in Figure~\ref{fig:diplie}.
Similar to the one of Theorem~\mref{thm:lipre}, we claim that the diamond for the operad $\su(R_\prelie )=\LDend$ is the disuccessor of Figure~\ref{fig:diplie}.
\end{proof}

\begin{coro}
The operad $\LDend$ is Koszul.
\end{coro}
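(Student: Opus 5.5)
The plan is to derive Koszulness directly from the Gr\"obner-Shirshov basis of Theorem~\ref{thm:gsbases}, via the standard criterion that an operad admitting a quadratic Gr\"obner basis is Koszul (\cite[Theorem 8.3.1]{LV12}, equivalently the PBW-basis criterion of \cite{DK09} in the shuffle setting).

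First, the relations $R_{\LDend}=\su(R_\prelie)$ listed in~\eqref{eq:oldend} are all quadratic. Each is a linear combination of decorated $3$-trees with two vertices, since the disuccessor construction of Definition~\ref{defn:suc} only redecorates vertices and never changes the underlying tree. Hence $\LDend=\bfk\mathcal{T}(\widetilde{\cal V_{\prelie}})/(\su(R_\prelie))$ is a binary quadratic operad. Its shuffle version is presented by the same generators and the same relations, rewritten as shuffle tree polynomials in $\bfk\cal T_{\shuffle}(\widetilde{\cal V_{\prelie}})$.

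Second, by Theorem~\ref{thm:gsbases}, this quadratic set of relations is a Gr\"obner-Shirshov basis with respect to the leaf-lexicographic-path order, which is a monomial order. By the Composition-Diamond lemma (Theorem~\ref{thm:cdl}), the irreducible shuffle tree monomials $\operatorname{Irr}(R_{\LDend})$ then form a basis of the shuffle operad associated to $\LDend$. This basis consists of the monomials avoiding the nine quadratic leading terms \eqref{eq:leadld1}--\eqref{eq:leadld3}, so it is a PBW-type basis in the sense of Hoffbeck and Dotsenko--Khoroshkin.

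Third, we invoke the theorem that a quadratic shuffle operad with a quadratic Gr\"obner basis is Koszul. Koszulness of a symmetric operad depends only on its underlying shuffle operad, since the Koszul complex is built from the underlying $\mathbb{N}$-modules and the forgetful functor is exact. Therefore $\LDend$ is Koszul.

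The only point needing care is that the Gr\"obner-Shirshov basis notion used in this paper coincides with the operadic Gr\"obner basis notion of \cite{DK09}. The paper states in the Gr\"obner-Shirshov subsection that the two formulations are equivalent, and the equivalence follows from the equivalence of (I) and (III) in Theorem~\ref{thm:cdl}. One must also check that the relations are quadratic, i.e.\ that no nonquadratic element was adjoined in completing the basis; this is immediate, because Theorem~\ref{thm:gsbases} asserts that the original relations themselves already form the basis. A shorter alternative route would cite $\LDend=\prelie\bullet\prelie$ together with Vallette's result on Manin black products with $\prelie$ \cite{Va}, but the Gr\"obner argument above is self-contained within the paper.
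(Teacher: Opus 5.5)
Your proposal is correct and follows the same route as the paper, which deduces Koszulness of $\LDend$ from the quadratic Gr\"obner--Shirshov basis of Theorem~\ref{thm:gsbases} by citing \cite[Theorem 8.3.1]{LV12}. Your remarks on quadraticity of the relations and on passing between the symmetric operad and its shuffle version spell out what the paper leaves implicit.
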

\begin{proof}
It follows from the result in ~\cite[Theorem 8.3.1]{LV12}.
\end{proof}

\section{Free L-dendriform algebras}
\mlabel{sect:free} In this section, we first present three linear bases for the free L-dendriform algebra: one obtained via the Gr\"obner-Shirshov method, and two others, $\lde$ and $\lw$, constructed through the free two-magma algebra and specialized typed decorated planar rooted trees. 
The following diagram illustrates the underlying idea.

$$\xymatrix{
\bfk\cal{T}_\shuffle(\{\succ', \,\prec',\,\succ,\,\prec\})(X) \ar@{->>}[d]\ar@{->}[rr]^{~~~~~~~\quad\quad\simeq}_{\quad\quad\quad {\rm two-magma\ algebra}}&& \bfk {\rm Mag_2}(X)\ar@{->>}[d]\ar@{->}[rr]^{\phi~\simeq}_{ {\rm two-magma\ algebra}}&&(\bfk \PMTN, \gleft , \gright)\ar@{->>}[d]\\
\LDend(X)=\bfk\operatorname{Irr}({R_{\LDend}})(X)\ar@{->}[rr]^{\quad\quad\simeq}_{\quad\quad{\rm L-dendriform ~~algebras}} \ar `d/20pt[r] `[rrrr]^{\simeq}_{{\rm L-dendriform ~~algebras}} [rrrr]&& \bfk \lde= {\rm Mag_2}(X)/I \ar@{->}[rr]^{\simeq}_{ \quad\quad{\rm L-dendriform ~~algebras}} && (\bfk\lw, \lp,\rp)\\
}
$$

A two-magma is a set  equipped with two binary operations, denoted by $\lp$ and $\rp$, without any additional relations. Denote $\fma$ the free two-magma on a set $\Omegax$.
Similar to the case of magma~\cite[Section 6]{DL}, each element of $\fma$ can be uniquely represented as $x\in X$ or in the form
\begin{equation}\label{eq:magun}
a = \Big(\big(((x \diamond_1 a_1) \diamond_2 a_2) \cdots\big) \diamond_n a_n\Big), \quad \text{ where }\, n\geq 1,~~ x \in \Omegax, ~~ \diamond_i \in \{\lp, \rp\},~~ a_i\in \fma.
\end{equation}
The {\bf degree} $\deg(a)$ is defined as the total number of $X$ occurring in $a$ counting the repetitions.
This representation captures the structure of the free two-magma as a sequence of nested applications of the operations $\lp$ and $\rp$ to the generating elements in $\Omegax$ and previously constructed elements of $\fma$.
The \textbf{free L-dendriform algebra} $\fna$ on a set $\Omegax$ is obtained as the quotient of the free two-magma algebra $\bfk\fma$ modulo the ideal $\ildend$  generated by all elements
\begin{equation}\label{eq:defld}
\begin{split}
(x \succ y) \prec z+x \succ(z \succ y)=&~x \succ(y \prec  z)+(x \ast z) \succ y,\\
(x \prec y) \prec z+ x\prec(z\ast  y)=&~x \prec(y \ast z)+(x \prec z) \prec y, \quad \text{ for all} ~~x, y,z\in \fma
\end{split}
\end{equation}
where $x \ast y=x \prec y+x \succ y$.

We now translate elements of $\fma$ into typed decorated planar rooted trees---namely, the planar analogue of the typed decorated rooted trees introduced in~\cite{Lo}.

\begin{defn}
Let $X$ and $\Omega$ be two nonempty sets.
\begin{enumerate}
\item An {\bf $X$-decorated $\Omega$-typed planar rooted tree} $t$ is a planar rooted tree,  whose vertices (internal vertexes and leaves) are decorated by elements in $X$ and whose edges are decorated by elements in $\Omega$. For simplicity, we refer to
$t$ as a {\bf typed decorated planar rooted tree} when the symbols $X$ and $\Omega$ do not need to be emphasized.

\item The degree $\deg(t)$ is defined as the total number of its vertices.

\item Denote the set of $X$-decorated $\{\lp, \rp\}$-typed planar rooted trees by \PMTN.
\end{enumerate}
\end{defn}

Let
\[
t_1, \dots, t_n\in \PMTN, \quad \diamond_1, \dots, \diamond_n\in \{\lp, \rp\}, \quad x\in X, \quad n\geq 0.
\]
Define
$B_x^+(\diamond_1,t_1, \dots, \diamond_n,t_n)$ to be
the typed decorated planar rooted tree in \PMTN\ obtained by grafting the $t_1, \dots, t_n$ (from left to right) onto a common new root decorated by $x$, where the edge connecting this new root to the root of each $t_i$ is
decorated by $ \diamond_i$.
Intuitively,
\[
B_x^+(\diamond_1,t_1, \dots, \diamond_n,t_n)=\begin{tikzpicture}
		\coordinate (z) at (0,0);
		\coordinate (y1) at (-2,2);
		\coordinate (y2) at (-1,2);
		\coordinate (yn) at (2,2);
		\draw (z) -- (y1) node[midway, left] {$\diamond_1$};
		\draw (z) -- (y2) node[midway,right] {$\diamond_2$};
		\draw (z) -- (yn) node[midway,right] {$\diamond_n$};
		\fill (y1) circle (2pt) node[above] {$t_1$};
		\fill (y2) circle (2pt) node[above] {$t_2$};
		\fill (z) circle (2pt) node[below] {$x$};
		\fill (yn) circle (2pt) node[above] {$t_n$};
		\node at (0.5,1) {$\cdots$};.
\end{tikzpicture}
\]
Here we use the convention that $\bullet_x = B_x^+(1)$, where $1$ is the empty forest.
Conversely, each typed decorated planar rooted tree in \PMTN\ can be uniquely represented as
\begin{equation}
B_x^+(\diamond_1,t_1, \dots, \diamond_n,t_n) \quad \text{ for some } x \in \Omegax, \quad \diamond_1, \dots, \diamond_n\in \{\lp, \rp\}, \quad t_1, \dots, t_n \in \PMTN, \quad n\geq 0.
\mlabel{eq:tdtree}
\end{equation}
A {\bf subtree} of a typed decorated planar rooted tree $t=B_x^+(\diamond_1,t_1, \dots, \diamond_n,t_n)$ in \PMTN\ is either the tree $t$ itself, or a subtree of one of $t_1,\ldots,t_n$.

The space $\bfk\PMTN$ can be turned into a two-magma algebra, equipped with the following two multiplications
\begin{equation}
\begin{split}
s \gleft t:=&~ B_x^+(\diamond_1',s_1, \dots, \diamond_m',s_m)\gleft B_y^+(\diamond_1,t_1, \dots, \diamond_n,t_n)\\
:=&~ B_x^+\Big(\diamond_1',s_1, \dots, \diamond_m',s_m, \lp, B_y^+(\diamond_1,t_1, \dots, \diamond_n,t_n)\Big),
\end{split}
\mlabel{eq:clp}
\end{equation}
and
\begin{equation}
\begin{split}
s \gright t :=&~ B_x^+(\diamond_1',s_1, \dots, \diamond_m',s_m)\gright B_y^+(\diamond_1,t_1, \dots, \diamond_n,t_n)\\
:=&~ B_x^+\Big(\diamond_1',s_1, \dots, \diamond_m',s_m, \rp, B_y^+(\diamond_1,t_1, \dots, \diamond_n,t_n)\Big),
\end{split}
\mlabel{eq:crp}
\end{equation}
where $$s=B_x^+(\diamond_1',s_1, \dots, \diamond_m',s_m), \quad t=B_y^+(\diamond_1,t_1, \dots, \diamond_n,t_n)\in \PMTN.$$
Intuitively, the operation $s\gleft t$ (resp. $s\gright t$) is obtained by grafting $t$ to the root of $s$ on the right, and decorating the new edge by $\lp$ (resp. $\rp$).
These operations can be viewed as decorated variants of the right grafting operation $\swarrow$; the left counterpart $\searrow$ is commonly used in the context of infinitesimal Hopf algebras~\cite{Fo}.

The triple $(\bfk \PMTN, \gleft, \gright)$ is indeed a free two-magma algebra. To establish this, we define the following linear map inductively on the degree:
\begin{equation}
\begin{aligned}
\phi: \bfk\fma \ra& ~~\bfk\PMTN\\
\Big(\big(((x\diamond_1 a_1)\diamond_2 a_2 \cdots)\diamond_n a_n\big)\Big)\mapsto&~~B_x^+\Big(\diamond_1,\phi(a_1), \diamond_2,\phi(a_2),\ldots, \diamond_n,\phi(a_n) \Big).
\end{aligned}
\mlabel{eq:phi}
\end{equation}
In particular, taking $n=0$ yields $\phi(x) = B_x^+(1) = \bullet_x$ for $x\in X$. Let us expose an example for better understanding.

\begin{exam}
Let $X=\{x,y,z,w\}$ be a set. Then
\[
\phi\Big(\big(x\lp w\big)\rp \big((y\lp x)\rp z\big)\Big)=B_x^+\Big(\lp, w,\rp,B_y^+(\lp, x, \rp, z)\Big)
=\begin{tikzpicture}[scale=0.8,descr/.style={fill=white}]
		\tikzstyle{every node}=[thick,minimum size=3pt, inner sep=1pt]
		
		\node (y) at (3,1.5) {};
		\node (t) at (3,3) [fill=black, circle] {};
		\node (b0) at (1.5,1.5) [ fill=black,circle] {};
		\node (b00) at (1.5,1.2) {};
		\node (b2) at (0.2,3) [fill=black,circle]  {};
		\node (cd) at (-0.25,1) {};
        \node (a2) at (-1.5,1.5) [fill=black,circle] {};
		\node (a0) at (0,0) [fill=black,circle] {};
		
		\node[above ] at (t) {$z$};
	   \node[above right] at (b2) {$x$};
		\node[above right] at (b00) {$y$};
         \node[above right] at (a2) {$w$};
		\node[below] at (a0) {$x$};
\node[above ] at (-0.7,0.7) {$\lp$};
\node[above ] at (0.7,0.7) {$\rp$};
\node[above ] at (0.7,2) {$\lp$};
\node[above ] at (2,2) {$\rp$};

		\draw (t) -- (b0);
	     \draw (b0) -- (b2);
        \draw (a0) -- (a2);
		\draw (a0) -- (b0);
\end{tikzpicture}.\]
\end{exam}

\begin{prop}\mlabel{prop:tmi}
The linear map $\phi$ in~\eqref{eq:phi} is an isomorphism of two-magma algebras.
\end{prop}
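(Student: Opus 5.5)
The plan is to show three things in order: $\phi$ is well defined, it is a two-magma morphism, and it is a bijection.

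\emph{Well-definedness.} By~\eqref{eq:magun}, every element of $\fma$ other than a generator has a unique representation $((x\diamond_1 a_1)\cdots)\diamond_n a_n$ with each $\deg(a_i)<\deg(a)$. So~\eqref{eq:phi} defines $\phi$ on the basis $\fma$ by recursion on the degree, and we extend linearly. An induction on degree also gives $\deg(\phi(a))=\deg(a)$, since the vertices of $B_x^+(\diamond_1,\phi(a_1),\dots,\diamond_n,\phi(a_n))$ are the root together with the vertices of the $\phi(a_i)$.

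\emph{Compatibility with the operations.} Take $a=((x\diamond_1a_1)\cdots)\diamond_na_n$ (with $n\geq 0$), $b\in\fma$ and $\diamond\in\{\lp,\rp\}$. The product $a\diamond b$ has the unique representation $(((x\diamond_1a_1)\cdots)\diamond_na_n)\diamond b$, which is just the previous chain extended by one more factor. Hence
\[
\phi(a\diamond b)=B_x^+\big(\diamond_1,\phi(a_1),\dots,\diamond_n,\phi(a_n),\diamond,\phi(b)\big).
\]
By~\eqref{eq:clp} and~\eqref{eq:crp} this is exactly $\phi(a)\gleft\phi(b)$ when $\diamond=\lp$, and $\phi(a)\gright\phi(b)$ when $\diamond=\rp$. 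Bilinearity then extends this to all of $\bfk\fma$.

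\emph{Bijectivity.} Both $\fma$ and $\PMTN$ are bases, and $\phi$ sends basis elements to basis elements, so it suffices to show that $\phi\colon\fma\to\PMTN$ is a bijection of sets. I would do this by induction on degree, using the unique decomposition~\eqref{eq:tdtree} of a tree as $B_x^+(\diamond_1,t_1,\dots,\diamond_n,t_n)$ with each $\deg(t_i)$ smaller than the degree of the tree.

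For surjectivity, each $t_i$ equals $\phi(a_i)$ for some $a_i$ by the induction hypothesis, and then $t=\phi(((x\diamond_1a_1)\cdots)\diamond_na_n)$. For injectivity, suppose $\phi(a)=\phi(a')$. Uniqueness of~\eqref{eq:tdtree} gives the same root $x$, the same length $n$, the same types $\diamond_i$, and $\phi(a_i)=\phi(a'_i)$ for each $i$. Induction gives $a_i=a'_i$, so $a=a'$.

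Alternatively, one can avoid proving bijectivity directly. Define $\psi\colon\PMTN\to\fma$ by $B_x^+(\diamond_1,t_1,\dots,\diamond_n,t_n)\mapsto((x\diamond_1\psi(t_1))\cdots)\diamond_n\psi(t_n)$, and check $\psi\phi=\id$ and $\phi\psi=\id$ by the same induction.

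The only delicate point is the uniqueness of the normal forms~\eqref{eq:magun} and~\eqref{eq:tdtree}. The first follows by peeling off the outermost product, since a non-generator element $a$ of the free two-magma is uniquely of the form $a'\diamond a''$. The second is immediate from removing the root. Everything else is routine bookkeeping.
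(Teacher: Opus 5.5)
Your proof is correct and follows essentially the same route as the paper: there too bijectivity comes from the explicit inverse defined recursively on degree (your alternative map $\psi$), and $\phi(a\diamond b)=\phi(a)\gd_\diamond\phi(b)$ is checked by writing $a$ in the normal form \eqref{eq:magun} and appending one more factor. Your direct injectivity/surjectivity induction and your remark that well-definedness rests on the uniqueness of the normal forms only add detail to that argument.
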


\begin{proof}
The bijectivity of $\phi$ follows from its explicit inverse:
\begin{equation*}
\begin{aligned}
\phi^{-1}: \bfk\PMTN \ra& \bfk\fma~~\\
B_x^+\Big(\diamond_1,t_1, \diamond_2,t_2,\ldots, \diamond_n,t_n \Big) \mapsto&~~\Big(\big(((x\diamond_1 \phi^{-1}(t_1))\diamond_2 \phi^{-1}(t_2) \cdots)\diamond_n \phi^{-1}(t_n)\big)\Big),
\end{aligned}
\mlabel{eq:phi0}
\end{equation*}
where the map is defined inductively on the degree of typed decorated planar rooted trees.
We are left to show that
$$\phi(a\diamond b) = \phi(a)\gd_\diamond \phi(b),\quad \text{ for } a, b\in \bfk\fma, \quad \diamond\in \{\lp,\rp\}.$$
Express the element $a$ in the form given by~\eqref{eq:magun}.
Then, by the definition of $\phi$, we obtain:
\begin{align*}
\phi(a\diamond b)=&~\phi\Bigg(\Big(\Big(\big(((x\diamond_1 a_1)\diamond_2 a_2 \cdots)\diamond_n a_n\big)\Big)\diamond b\Big)\Bigg)\\
=&~B_x^+\Big(\diamond_1,\phi(a_1), \diamond_2,\phi(a_2),\ldots, \diamond_n,\phi(a_n),\diamond, \phi(b) \Big)\\
=&~B_x^+\Big(\diamond_1,\phi(a_1), \diamond_2,\phi(a_2),\ldots, \diamond_n,\phi(a_n)\Big) \gd_\diamond \phi(b) \\
=&~\phi(a) \gd_\diamond \phi(b).
\end{align*}
This completes the proof.
\end{proof}

Note that the operad of two-magma algebras is precisely the free shuffle operad $\bfk \cal{T}_\shuffle(\{\succ', \,\prec',\,\succ,\,\prec\})$.
So by \cite[Proposition 5.2.6]{LV12}, we have the following isomorphism of two-magma algebras
\begin{equation}
\bfk\cal{T}_\shuffle(\{\succ', \,\prec',\,\succ,\,\prec\})(X)\cong \bfk \fma.
\mlabel{eq:tfma}
\end{equation}
Under this isomorphism, the 9 relations in \eqref{eq:oldend} from the L-dendriform operad, when evaluated on
$x\leq y\leq z\in X$ (by the action of the group $S_3$,  such order is assumed) are as follows:
\begin{align} 
(x \prec y) \prec z = &~x \prec(y \ast z)+(x \prec z) \prec y- x\prec(z\ast  y),~~y\neq z ~(\text{if $y=z$, identity is trivial})\mlabel{eq:9rel1},\\
(x \succ y) \prec z=&~x \succ(y \prec  z)+(x \ast z) \succ y-x \succ(z \succ y),\mlabel{eq:9rel2}\\
(x\lp y) \lp z=&~x\lp(y \lp z)+(x \lp z) \rp y- (x\rp y) \lp z-x\lp(z \rp y),\mlabel{eq:9rel3}\\
(y \succ x) \prec z=&~y \succ(x \prec  z)+(y \ast z) \succ x-y \succ(z \succ x),\mlabel{eq:9rel4}\\
(y \prec x) \prec z = &~y \prec(x \ast z)+(y \prec z) \prec x- y\prec(z\ast  x),~~~~x\neq z ~(\text{if $x=z$, identity is trivial})\mlabel{eq:9rel5}\\
(y \rp x) \succ z=&~y \succ(x \succ z)+(y \succ z) \prec x-(y \lp x) \succ z-y \succ(z \prec  x),\mlabel{eq:9rel6}\\
z \succ(y \succ x) =&~ z \succ(x \prec  y)+(z \ast y) \succ x-(z \succ x) \prec y,\mlabel{eq:9rel7}\\
z \succ(y \prec  x) =&~(z \succ y) \prec x+z \succ(x \succ y)-(z \ast x) \succ y,\mlabel{eq:9rel8}\\
z \prec(y \rp x)=&~(z \prec y) \prec x+z\rp(x\ast y)-(z\rp x)\rp y-z \prec(y \lp x),\mlabel{eq:9rel9}\\
&~~~~\quad\quad\quad\quad x\neq y ~(\text{if $x=y$, identity is trivial})\nonumber
\end{align}
where the nine leading monomials are
\begin{align*}
(x\rp y)\rp z, \quad(x\lp y)\rp z,\quad  (x\lp y)\lp z,\\
(y\lp x)\rp z, \quad(y\rp x)\rp z,\quad (y\rp x)\lp z, \\
z\lp (y\lp x), \quad z\lp (y\rp x), \quad z\rp (y\rp x).
\end{align*}
Among these nine leading monomials, some monomials share the same operadic structure:
\begin{itemize}
\item The first and fifth are both of the form $\rp\circ_1 \rp$.

\item The second and fourth share the form $\rp\circ_1 \lp$.
\end{itemize}
As a result, these $9$ monomials collapse into $7$ distinct types up to operadic composition, corresponding precisely to the 7 cases listed in Definition~\ref{defn:leadm} and Remark~\mref{rem:leadm}.
To illustrate the normal forms resulting from the rewriting system defined by~(\mref{eq:9rel1})-(\mref{eq:9rel9}), we present an example.

\begin{exam}
Consider the first type $(x\rp y)\rp z$, where the parentheses are grouped to the left.
Replacing the variables $x,y,z$ by
\[
(x\rp x_1)\lp x_2, \quad (y\rp y_1)\lp y_2, \quad (z\rp z_1)\lp z_2,
\]
where $x,y,z, x_i,y_i,z_i\in X$ with $x<y <z<x_1<y_1<z_1$,
we obtain
\begin{align*}
\ \Big(\big((x\rp x_1)\lp x_2\big)\rp \big((y\rp y_1)\lp y_2\big)\Big)\rp\big((z\rp z_1)\lp z_2\big)
\in\ \Big\{(a\rp b)\rp c~\Big|~a,b,c\in {\rm Mag_2}(X)\Big\}.
\end{align*}
Via the isomorphism $\phi$ in~\eqref{eq:phi}, this word corresponds to the typed decorated planar rooted tree
$$B^+_{x}\Big(\rp,x_1,\lp, x_2, \rp, B^+_{y}\Big(\rp,y_1,\lp, y_2\Big), \rp, B^+_{z}\Big(\rp,z_1,\lp, z_2\Big)\Big)\in \PMTN,$$
which matches the first case in~\eqref{eq:ldw1} and also corresponds to~\eqref{eq:ldwa}.

In contrast, let us consider another type $z\lp (y\lp x)$, where the parentheses are grouped to the right.
Replacing the variables $x,y,z$ by
\[
(z\rp z_1)\lp z_2, \quad (y\rp y_1)\lp y_2, \quad (x\rp x_1)\lp x_2,
\]
where $x,y,z, x_i,y_i,z_i\in X$ with $z<y <x<z_1<y_1<x_1<z_2<y_2<x_2$,
we get
\begin{align*}
\big((x\rp x_1)\lp x_2\big) \lp \Big( \big((y\rp y_1)\lp y_2\big)\lp \big((z\rp z_1)\lp z_2\big)\Big)
\in\ \Big\{a\lp (b\lp c)~\Big|~a,b,c\in {\rm Mag_2}(X)\Big\}.
\end{align*}
Under the isomorphism $\phi$ in~\eqref{eq:phi}, this word corresponds to the typed decorated planar rooted tree
$$B^+_{x}\Big(\rp,x_1,\lp, x_2, \lp, B^+_{y}\Big(\rp,y_1,\lp, y_2, \lp, B^+_{z}\Big(\rp,z_1,\lp, z_2\Big)\Big)\Big)\in \PMTN,$$
which corresponds to the third case in~\eqref{eq:ldw2}, as well as to~\eqref{eq:ldwb}.
\end{exam}

By systematizing the ideas illustrated in the above example, we introduce the notion of an L-dendriform word, which will serve as the basis elements of the free L-dendriform algebra. The point is that directly defining the L-dendriform word is challenging. Instead, we exploit the additional structural dimension present in the tree, using it as a tool to define the L-dendriform word.

\begin{defn}\label{defn:leadm}
Let $X$ be a well-ordered set. An element $a\in \fma$  is called an {\bf L-dendriform word}
if the corresponding typed decorated planar rooted tree  $\phi(a)$ in \PMTN\ does not contain any subtree of the form $B_x^+(\diamond_1,t_1, \dots, \diamond_n,t_n)$ satisfying either of the following conditions:
\begin{enumerate}
\item $t_{n-1}=B^+_y(\ast)$,\, $ t_n=B^+_z(\ast)$ and
\begin{equation}\label{eq:ldw1}
(\diamond_{n-1}, \diamond_{n})=
\begin{cases}
(\rp,\rp) , & \mbox{if }  x, y\leq z\in X \text{ and } y\neq z\\
(\lp,\rp), & \mbox{if }  x, y\leq z\in X \\
(\lp,\lp), & \mbox{if  } x\leq y\leq z\in X \\
(\rp,\lp), & \mbox{if  }  y\leq x\leq z\in X,
\end{cases}
\end{equation}

\item $t_{n}=B^+_y(\ast, \diamond, B^+_z(\ast))$ with $\diamond\in \{\lp,\rp\}$ and
\begin{equation}\label{eq:ldw2}
(\diamond_n, \diamond)=  \begin{cases} (\lp,\lp) , & \mbox{if } z\leq y\leq x\in X\\ (\lp,\rp), & \mbox{if } z\leq y\leq x\in X\\  (\rp,\rp), & \mbox{if } z\leq y\leq x\in X \text{ and } x\neq y.\end{cases}
\end{equation}
\end{enumerate}
Here, the symbol $\ast$ denotes an arbitrary subexpression chosen so that the entire expression is well-defined in the sense of~(\mref{eq:tdtree}).
\end{defn}

Let $\lde$ denote the set of L-dendriform words, and define
\begin{equation}
\lw :=\{\phi(a)~|~a \in\lde\}.
\mlabel{eq:lwt}
\end{equation}

\begin{remark}\label{rem:leadm}
An L-dendriform word $a\in\lde$ can be viewed as its typed decorated planar rooted tree  $\phi(a) $ in \PMTN\  does not contain any subtree of the form
\begin{align}
&B_x^+\Big(\ast,\rp, B_y^+\big(\ast \big),\rp,B_z^+\big(\ast\big)\Big),\quad\text{ with some }\, x,y\leq z\in X \text{ and } y\neq z \label{eq:ldwa}\\
&B_x^+\Big(\ast ,\lp, B_y^+\big(\ast \big),\rp,B_z^+\big(\ast \big) \Big),\quad\text{ with some }\, x,y\leq z\in X\nonumber\\
& B_x^+\Big(\ast ,\lp, B_y^+\big(\ast  \big),\lp,B_z^+\big(\ast \big)\Big), \quad \text{ with some }\, x\leq y\leq z\in X\nonumber\\
& B_x^+\Big(\ast ,\rp, B_y^+\big(\ast  \big),\lp,B_z^+\big(\ast \big) \Big), \quad \text{ with some }\, y\leq x\leq z\in X\nonumber
\end{align}
and
\begin{align}
&B_x^+\Big(\ast ,\lp, B_y^+\big(\ast ,\lp,B_z^+\big(\ast \big)\big) \Big),\quad\text{ with some }\, z\leq y\leq x\in X\label{eq:ldwb}\\
&B_x^+\Big(\ast ,\lp, B_y^+\big(\ast ,\rp,B_z^+\big(\ast \big)\big) \Big),\quad\text{ with some }\, z\leq y\leq x\in X\nonumber\\
&B_x^+\Big(\ast ,\rp, B_y^+\big(\ast ,\rp,B_z^+\big(\ast \big)\big) \Big),\nonumber \quad \text{ with some }\, z\leq y\leq x\in X \text{ and } x\neq y.
\end{align}
\end{remark}

We now arrive at one of the main results of this section, which provides three
\bfk-bases for the free L-dendriform algebra.

\begin{theorem}\mlabel{thm:gsbld}
Let $X$ be a well-ordered set.
\begin{enumerate}
\item The set
\begin{equation}
\begin{split}
\operatorname{Irr}({R_{\LDend}}) :=&~ \cal{T}_\shuffle\Big(\{\succ', \,\prec',\,\succ,\,\prec\}\Big) \\
& \Big\backslash\Big\{\left.q\right|_{\bar{s}} \mid q \in \cal{T}(\{\succ', \,\prec',\,\succ,\,\prec\})^{\star_3},  s \in \su(\{\R_\prelie,S_\prelie,T_\prelie\})\Big\}
\end{split}
\label{eq:irr}
\end{equation}
is a $\bfk$-basis of the operad $\LDend$.  \mlabel{it:gsbld1}

\item The $\lde$ and $\lw$ are two \bfk-bases of the free L-dendriform algebra $\LDend(X)$:
$$\LDend(X)\cong\bfk\lde\cong \bfk\lw. $$
\mlabel{it:gsbld2}
\end{enumerate}
\end{theorem}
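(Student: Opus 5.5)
Part~(\ref{it:gsbld1}) follows at once from Theorem~\ref{thm:gsbases} together with the Composition-Diamond lemma (Theorem~\ref{thm:cdl}). Theorem~\ref{thm:gsbases} says that $\su(R_\prelie)=R_{\LDend}$ is a Gr\"obner-Shirshov basis of $\bfk\cal T_{\shuffle}(\widetilde{\cal V_{\prelie}})$ for the leaf-lexicographic-path order. Then (I)$\Rightarrow$(III) gives the decomposition $\bfk\cal T_{\shuffle}=\bfk\Irr(R_{\LDend})\oplus \Id(R_{\LDend})$. Since $\LDend=\bfk\cal T_{\shuffle}(\widetilde{\cal V_{\prelie}})/\Id(R_{\LDend})$ as shuffle operads (Lemma~\ref{prop:preLD}), and passing to the underlying shuffle operad does not change the underlying $\bfk$-spaces, the set~\eqref{eq:irr} is a basis. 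The only point to check is that the nine leading monomials used in~\eqref{eq:irr} are exactly those in~\eqref{eq:leadld1}--\eqref{eq:leadld3}.

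For part~(\ref{it:gsbld2}) I would argue in two steps. First I identify $\LDend(X)$ with a quotient of $\bfk\fma$ that has a Gr\"obner-Shirshov basis. Via~\eqref{eq:tfma}, the free L-dendriform algebra is $\bfk\fma/\ildend$, and $\ildend$ is generated by the evaluations of the nine relations~\eqref{eq:9rel1}--\eqref{eq:9rel9}. The restrictions $x\le y\le z$ are harmless, because the $S_3$-action permutes the relations among themselves, and in the degenerate cases $y=z$ or $x=y$ the relation becomes trivial.

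I would then show that these nine families form a Gr\"obner-Shirshov basis of the free two-magma algebra $\bfk\fma$, working in the classical (Drensky-Holtkamp) Composition-Diamond lemma for free nonassociative algebras with a degree-lexicographic order that refines the operadic one. The most economical route is to transport confluence: the free algebra functor $\opd\mapsto\opd(X)=\bigoplus_n \opd(n)\ot_{\BS_n}X^{\ot n}$ sends the shuffle normal forms $\Irr(R_{\LDend})(n)$, evaluated on weakly increasing words in $X$ read through the shuffle labeling, to a spanning set of $\LDend(X)$. Linear independence then follows from the operadic statement, since $\LDend(X)\cong\bigoplus_n \LDend(n)\ot_{\BS_n}X^{\ot n}$. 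Concretely, the basis of $\LDend(X)$ consists of the elements of $\fma$ that contain no submonomial equal to one of the nine leading monomials $(x\rp y)\rp z,\dots,z\rp(y\rp x)$ with the indicated variable orderings; here $x,y,z$ range over whole subwords, compared through their minimal leaf as in the edge labeling of shuffle trees.

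The second step is combinatorial. I translate ``avoids the nine leading submonomials'' through the isomorphism $\phi$ of Proposition~\ref{prop:tmi}. A left-grouped leading monomial $(a\diamond_{n-1}b)\diamond_n c$ corresponds, by~\eqref{eq:phi}, to the last two children $t_{n-1}=\phi(b)$ and $t_n=\phi(c)$ of the root of $\phi(a\diamond b\diamond c)$, with edge types $(\diamond_{n-1},\diamond_n)$. This recovers case~\eqref{eq:ldw1}, with the four type patterns obtained after merging the seven operadic shapes noted before Definition~\ref{defn:leadm}. A right-grouped leading monomial $a\diamond_n(b\diamond c)$ corresponds to the last child $t_n=B^+_y(\ast,\diamond,B^+_z(\ast))$, which is case~\eqref{eq:ldw2}. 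The labels $x,y,z$ are the root decorations, which are the minimal leaves; this matches the shuffle min-labeling. Hence $\Irr$ corresponds exactly to $\lde$, and $\phi$ carries it bijectively to $\lw$ by~\eqref{eq:lwt}. This gives $\LDend(X)\cong\bfk\lde\cong\bfk\lw$, where on $\bfk\lw$ the L-dendriform operations are transported via $\phi$ and the rewriting.

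The main obstacle is the passage from operadic normal forms to algebra normal forms: one must verify that the comparison of subwords by minimal letter, together with the equality conditions (for instance $y\neq z$ in the first case and $x\neq y$ in the last), reproduces exactly the side conditions in Definition~\ref{defn:leadm}. This includes the degenerate cases in which repeated letters make a relation trivial, so that the corresponding monomial must \emph{not} be excluded. I would settle this first by a careful case check on the seven shapes, using the $S_3$-orbit description of the relations~\eqref{eq:oldend}.
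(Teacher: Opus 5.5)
Part~(1) is the paper's argument (Theorem~\ref{thm:gsbases} plus the Composition-Diamond lemma) and is fine. Part~(2) has a genuine gap. The obstacle you flag at the end cannot be removed by a case check, because the statement fails as written.

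The false step is ``linear independence then follows \ldots\ since $\LDend(X)\cong\bigoplus_n\LDend(n)\otimes_{\BS_n}(\bfk X)^{\otimes n}$''. This identifies with $\LDend(n)$ only the components of words whose letters are pairwise distinct. For a letter content whose stabilizer is a Young subgroup $H\neq 1$, the component is the coinvariant space $\LDend(n)_H$. The shuffle basis does not descend to it, because $\LDend(n)$ is not $\BS_n$-free: $\dim\LDend(3)=48-9=39$ is not divisible by $6$.

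Concretely, in content $x^3$ there are $8$ two-magma words and a single relation, the first identity of~\eqref{eq:defld} at $x=y=z$; the second identity becomes $0=0$. The six evaluated relations \eqref{eq:9rel2}, \eqref{eq:9rel3}, \eqref{eq:9rel4}, \eqref{eq:9rel6}, \eqref{eq:9rel7}, \eqref{eq:9rel8} all collapse to $\pm$ this one relation. So this component is $7$-dimensional, whereas Definition~\ref{defn:leadm} keeps only $(x\prec x)\prec x$, $x\prec(x\prec x)$ and $x\prec(x\succ x)$. Hence $\lde$ does not span. With repeated letters, distinct relations do not merely become trivial; they become linearly dependent, so their leading monomials cannot all be excluded.

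Your translation step also fails on multilinear components, for two reasons.

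First, the root decoration of $\phi(a)$ is the leftmost letter of $a$, not its minimum. Under~\eqref{eq:tfma}, a word $u\prec v$ with $\min v<\min u$ is the shuffle tree $\prec'(v,u)$. For example, with $a_1<a_2<a_3<a_4$, the word $((a_4\prec a_1)\prec a_2)\prec a_3$ is the shuffle tree $\prec(\prec(\prec'(1,4),2),3)$, which is divisible by $\overline{\su_1(\R_\prelie)}$. Yet it lies in $\lde$, because the test there is $a_4\le a_3$.

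Second, Definition~\ref{defn:leadm} inspects only the last two children of each vertex. A leading submonomial that occurs as a left factor corresponds instead to a proper prefix $B_x^+(\diamond_1,t_1,\ldots,\diamond_k,t_k)$ with $k<n$. Take $a_0<a_1<a_2<a_3$ and multiply the second identity of~\eqref{eq:defld} at $(a_1,a_2,a_3)$ on the right by $a_0$ via $\prec$. This gives a nontrivial relation in $\LDend(X)$ among six words: $((a_1\prec a_2)\prec a_3)\prec a_0$, $((a_1\prec a_3)\prec a_2)\prec a_0$, and $(a_1\prec(a_i\diamond a_j))\prec a_0$ with $(i,j)\in\{(2,3),(3,2)\}$ and $\diamond\in\{\prec,\succ\}$. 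All six lie in $\lde$: the root's last child is the smallest letter $a_0$, so no case of \eqref{eq:ldw1} or \eqref{eq:ldw2} applies, and each inner vertex has a single leaf child. So $\lde$ is not linearly independent either, and ``$\Irr$ corresponds exactly to $\lde$'' is false.

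The paper's proof simply asserts the bijection between the leading monomials and the subtrees of Definition~\ref{defn:leadm}, so you inherit exactly its defect. What the operadic argument actually yields is a normal form on multilinear components only: words with distinct letters that avoid the nine patterns at every prefix, with $x,y,z$ compared by their minimal letters.
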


\begin{proof}
\ref{it:gsbld1} It follows directly from Lemma~\mref{thm:cdl} and Theorem~\mref{thm:gsbases}.

\ref{it:gsbld2} By~(\mref{eq:tfma}), the set $\cal{T}_\shuffle\Big(\{\succ', \,\prec',\,\succ,\,\prec\}\Big)(X)$ is bijective to the set $\fma$. Further, the leading monomials $\bar{s}$ in~(\mref{eq:irr}) are bijective to the subtrees listed in Definition~\mref{defn:leadm}.
Thus it follows from~(\mref{eq:irr}) and Definition~\mref{defn:leadm} that
the $\operatorname{Irr}({R_{\LDend}})(X)$ is bijective to the set $\lde$, and so
$$\LDend(X) \overset{\ref{it:gsbld1}}{\cong} \bfk \operatorname{Irr}({R_{\LDend}})(X) \cong \bfk \lde.$$
The second required isomorphism is from~(\ref{eq:lwt}) and the fact that $\phi$ is an isomorphism.
\end{proof}

\smallskip

\noindent
{{\bf Acknowledgments.}
H. H. Zhang is supported by the Natural Science Basic Research Program of Shaanxi (2026JC-YBQN-0018),
the Young Talent Fund of Association for Science and Technology in Shaanxi, China (20250530), 
the Young Talent Fund of Association for Science and Technology in Yulin (20250711) and the Scientific Research Foundation of High-Level Talents of Yulin University (2025GK12).
X. Gao is supported by the National Natural Science Foundation of China (12571019), the Natural Science Foundation of Gansu Province (25JRRA644) and the Innovative Fundamental Research Group Project of Gansu Province (23JRRA684). 
Y. Y. Zhang is supported by  the Natural Science Foundation of China (12101183), the Natural Science Foundation of Henan (262300421229), the Postdoctoral Fellowship Program of CPSF under Grant Number (GZC20240406) and the Henan Provincial Selective Research Funding Program for Returned Scholars Studying Abroad (HNLX202613).

\medskip

\noindent
{\bf Declaration of interests. } The authors have no conflicts of interest to disclose.

\noindent
{\bf Data availability. } Data sharing is not applicable as no new data were created or analyzed.

\end{document}